\newcommand{\ubar}[1]{\underaccent{\bar}{#1}}
\renewcommand*\vec[1]{
	\global\xveccount#1
	\begin{pmatrix}
		\vecnext
	}
	\def\vecnext#1{
		#1
		\global\advance\xveccount-1
		\ifnum\xveccount>0
		\\
		\expandafter\vecnext
		\else
	\end{pmatrix}
	\fi
}
\newcommand{\N}{\mathbb{N}}
\newcommand{\R}{\mathbb{R}}
\DeclareMathOperator{\Pot}{\mathcal{P}}
\DeclareMathOperator{\inv}{^{-1}}
\newcommand{\abs}[1]{\left| #1 \right|}
\newcommand{\lintegral}[3]{\int_{#1} #2 \, d#3}
\newcommand{\lra}{\Leftrightarrow} 
\newcommand{\1}[1][]{\mathds{1}\ifthenelse{\isempty{#1}}{}{_{#1}}}
\newcommand{\dummydot}{\,\cdot\,}
\newcommand{\innerprod}[2]{\left\langle #1, #2 \right\rangle}
\DeclareMathOperator{\supp}{supp}
\newcommand{\transposed}{^{T}}
\DeclareMathOperator{\argmax}{argmax}
\newcommand{\colonlra}{\mathrel{\vcentcolon\Leftrightarrow}} 
\DeclareMathOperator{\id}{id}
\newcommand{\ifequals}[4]{\ifthenelse{\equal{#1}{#2}}{#3}{#4}}
\definecolor{verylightgray}{gray}{0.97}
\definecolor{purple}{RGB}{127,0,116} 
\definecolor{brickred}{rgb}{0.56, 0.175, 0.231} 
\newcommand{\norm}[1]{\left\lVert#1\right\rVert}
\newcommand{\liminfty}[1]{\lim\limits_{#1 \rightarrow \infty}}
\newcommand{\ser}[1]{\sum\limits_{\ifthenelse{\isin{=}{#1}}{#1}{#1=1}}^{\infty}}
\newcommand{\toinfty}[1]{\xrightarrow{\;#1 \to \infty\;}}
\newcommand{\twocases}[4]
{
    \begin{cases}
        #1, &\text{if } #2 \\
        #3, &\ifthenelse{\isempty{#4}}{\text{else}}{\text{if } #4}
    \end{cases}
}
\newcommand{\pars}[1]{\left(#1\right)}
\newcommand{\bigpars}[1]{\bigl(#1\bigr)}
\newcommand{\biggpars}[1]{\biggl(#1\biggr)}
\newcommand{\set}[1]{\{#1\}}
\newcommand{\biggset}[1]{\biggl\{#1\biggr\}}
\newcommand{\bigmid}{\bigm\vert}
\newcommand{\biggmid}{\biggm\vert}
\patchcmd{\upbracefill}{\m@th}{\scriptscriptstyle\m@th}{}{}
\patchcmd{\upbracefill}{$\braceld$}{$\scriptstyle\braceld$}{}{}
\patchcmd{\upbracefill}{\bracelu}{\bracelu\mkern-1mu}{}{}
\patchcmd{\upbracefill}{\hfill\braceru}{\hfill\mkern-1mu\braceru}{}{}
\newcommand{\smallmat}[1]{\left(\begin{smallmatrix}#1\end{smallmatrix}\right)}
\newcommand{\undersetbrace}[2]{\underset{#2}{\underbrace{#1}}}
\newcommand\restr[2]{{
        \left.\kern-\nulldelimiterspace 
        #1 
        \vphantom{\big|} 
        \right|_{#2} 
}}
\let\realincludegraphics\includegraphics
\renewcommand{\includegraphics}[2][]{\realincludegraphics[#1]{Lowres#2}}
\def\thm@space@setup{%
    \thm@preskip=\parskip \thm@postskip=0pt
}
\DeclareMathOperator{\A}{\mathcal{A}}
\newcommand{\Rp}{\mathbb{R}_{\geq 0}}
\let\epsilon\varepsilon
\let\phi\varphi
\newcommand{\B}{\mathcal{B}}
\newcommand{\M}{M}
\theoremstyle{definition}
\newtheorem{thm}{Theorem}[chapter] 
\newtheorem*{thm*}{Theorem}
\newtheorem{lemma}[thm]{Lemma} 
\newtheorem*{lemma*}{Lemma}
\newtheorem{cor}[thm]{Corollary}
\newtheorem*{cor*}{Corollary}
\newtheorem{defn}[thm]{Definition} 
\newtheorem*{defn*}{Definition}
\newtheorem{ex}[thm]{Example} 
\newtheorem*{ex*}{Example}
\newtheorem{rem}[thm]{Remark}
\newtheorem*{rem*}{Remark}
\newtheorem{notation}[thm]{Notation}
\newtheorem*{notation*}{Notation}
\title{\huge Distribution-Valued Games}
\begin{document}
    \pagenumbering{roman}
    \begin{titlepage}
        \newcommand{\authornamefirst}{Vincent}
        \newcommand{\authornamelast}{Bürgin}
        \newcommand{\worktitle}{Distribution-Valued Games}
        \newcommand{\worksubtitle}{Overview, Analysis, and a Segmentation-Based Approach}
        \newcommand{\thesistype}{Bachelor Thesis}
        \newcommand{\thesisdate}{September 25, 2020}
        \newcommand{\thesisprofDeMeer}{Prof. Dr.-Ing. Hermann de Meer}
        \newcommand{\thesisprofWirth}{Prof. Dr. Fabian Wirth}
        \newcommand{\supervisor}{Ali Alshawish,~M.~Sc.}
        \newcommand{\chairDeMeer}{Chair of Computer Networks \& Communications}
        \newcommand{\chairWirth}{Chair of Dynamical Systems}
        \vspace{1cm}
        
        \begin{center}
            \begin{tabular}{c}
                \includegraphics[width=6.5cm]{Pictures/logouni.pdf}
            \end{tabular}
            
            \vspace{3cm}
            \Large University of Passau \\
            \Large Faculty of Computer Science and Mathematics \\
            \vspace{0.3cm}
            {\Large \textbf{\chairDeMeer} } \\
            {\large \thesisprofDeMeer } \\[0.4cm]
            {\Large \textbf{\chairWirth} } \\
            {\large \thesisprofWirth}
        \end{center}
        
        \vspace{2.5cm}        
        \begin{center}
            {\textbf{\huge \thesistype}} 
        \end{center}
        
        \begin{center}
            {
                \LARGE \worktitle \\[0.25cm]
                \Large \worksubtitle
            }  \\[0.8cm]
            {\Large
                \authornamefirst~\authornamelast
            }
        \end{center}
        
        \vfill { \vfill {\large
                \begin{tabular}[l]{llll}
                    Date:       & \thesisdate 
                    \smallskip \\
                    Supervisors:   & \thesisprofDeMeer \\
                    &\thesisprofWirth \\
                    & \supervisor
            \end{tabular}}
        }
    \end{titlepage}
%
%
    \begin{abstract}
        The paper \cite{bib:rassGameRiskManagI} introduced \emph{distribution-valued games}. This game-theoretic model uses probability distributions as payoffs for games in order to express uncertainty about the payoffs. 
        The player's preferences for different payoffs are expressed by a stochastic order which we call the \emph{tail order}.
        
        \sloppypar{
        This thesis formalizes distribution-valued games with preferences expressed by general stochastic orders, and specifically analyzes properties of the tail order.
        It identifies sufficient conditions for tail-order preference to hold, but also finds that some claims in \cite{bib:rassGameRiskManagI} about the tail order are incorrect, for which counter-examples are constructed.
        In particular, it is demonstrated that a proof for the totality of the order on a certain set of distributions contains an error;
        the thesis proceeds to show that the ordering is not total on the slightly less restricted set of distributions with non-negative bounded support.
        It is also shown that not all tail-ordered games have mixed-strategy Nash equilibria, and in fact almost all tail-ordered games with finitely-supported payoff distributions can only have a Nash equilibrium if they have a pure-strategy Nash equilibrium.}
        
        The thesis subsequently extends an idea from \cite{bib:tweakableStochasticOrders} and proposes a new solution concept for distribution-valued games. This concept is based on constructing multi-objective real-valued games from distribution-valued games by segmenting their payoff distributions.
    \end{abstract}


    \setstretch{1.08}
    \tableofcontents
    \thispagestyle{empty}
    \setstretch{1.18}
    
    
    \chapter{Introduction}
    \pagenumbering{arabic}
    Game theory studies \emph{games} that are played by multiple independent players with different objectives, and analyzes the players' strategic possibilities.
    It has a wide range of applications in economics and risk management, and can be fruitfully used in cyber security as well.
    A \emph{game} models a situation that is determined by the actions its players take independently: Every player has a set of strategies to choose from, and the \emph{outcomes} (or \emph{payoffs}) the players obtain depend on the combination of all the strategies the players choose.
    In the classical setting, the payoffs are represented by numbers: A numeric payoff can for example be interpreted as a monetary reward the player gets, or as a more abstract \emph{utility} the outcome situation has for the player.
    
    However real-world settings tend to involve a lot of uncertainty, and it may be hard to specify a clear-cut number as outcome of a certain situation. The economical branch of decision theory provides tools for dealing with stochastic outcomes instead: It uses \emph{preference relations} between \emph{lotteries} to formalize rational decisions in such situations.
    
    A natural idea is to extend the existing theory of real-valued games by allowing probability distributions as payoffs, and such a model will be the basis for this bachelor thesis. This model of \emph{distribution-valued games} was first introduced and used in several papers by Stefan Rass et al (e.g. \cite{bib:rassGameRiskManagI,bib:rassTotalOrderingOnLossDistributions}). The idea is that the outcomes of the players are stochastic experiments, modeled by probability distributions, and each player’s preferences for particular outcomes are represented by a preference relation on the distributions, a \emph{stochastic order}. As an example, the preference of a player might express that the player tries to maximize the expected gain, but is also willing to sacrifice some expected gain if this reduces the risk of very unfortunate outcomes.
    The thesis focuses on a particular stochastic order, which we call the tail order: It was introduced in \cite{bib:rassGameRiskManagI} together with distribution-valued games, and expresses preferences that are maximally risk-averse. It is intended to be used on loss (instead of payoff) distributions, and it prefers one distribution over another if the other distribution assigns some (arbitrarily tiny) larger probability to a higher loss: In other words, its aim is to minimize the worst-case loss. 
    The ordering is defined based on moment sequences, and an analysis of its properties makes up a substantial part of this thesis. In particular, we show that not all of its properties that were claimed when it was first introduced hold true: For one, we show that an asserted characterization of the ordering based on the density functions of the involved distributions only holds in one direction in form of a sufficient condition. This result leads to the interesting question between which distributions the ordering is total: In particular, the thesis shows that even the set of distributions with bounded non-negative support contains elements that are incomparable by the ordering. The second issue we go into is that not all distribution-valued games with tail-order preferences have Nash equilibria: In particular, we show that they can only have mixed-strategy equilibria under very specific conditions.
    Finally, the thesis presents an alternative stochastic ordering based on segmenting loss distributions which was constructed by Ali Alshawish: This ordering was introduced with the goal of tweaking the tail order such that risk attitudes not maximally pessimistic are possible. The thesis then proposes a new approach of using the segmentation idea to turn a distribution-valued game into a multi-objective real-valued game, and shows how such games can be solved via the existing theory of Pareto-Nash equilibria.
    
    The primary contributions of this thesis are that it presents a complete formalization of distribution-valued games (Sections \ref{sec:generalizedPayoffGames} and \ref{sec:distributionValuedNormalFormGames}), critically analyzes the tail order, provides counterexamples to several misconceptions in the original publications about this ordering (Sections \ref{sec:stochasticTailOrder} and \ref{sec:equilibriaInTailOrderedGames}), and proposes the Pareto-Nash equilibria of a game obtained by distribution segmentation as an alternative solution concept for distribution-valued games (Chapter \ref{chap:segmentingLossDistributions}).
    To provide the basis for the discussions in those chapters, Chapter \ref{chap:nonCooperativeRealValuedGameTheory} reviews the basics of classical non-cooperative game theory. This serves as an introduction to the concepts that are later generalized, and we also introduce some results that are needed for the analysis of Nash equilibria with respect to the tail order.
    Chapter \ref{chap:mathematicalPreliminaries} introduces mathematical preliminaries: We shortly go over important concepts from probability theory that the subsequent chapters rely on, and review basic notions from decision theory.

    \subsubsection{Related Work}
    
    Directly related to this work are the papers by Stefan Rass et al \cites{bib:rassGameRiskManagI,bib:rassGameRiskManagII,bib:rassTotalOrderingOnLossDistributions} that introduce the tail order and use it in the context of risk management, and by Ali Alshawish et al \cite{bib:alshawishQuasiPurificationOfMixedGameStrategies,bib:tweakableStochasticOrders} which use the tail order and define the related \emph{tweakable stochastic order}, which serves as a basis for the ideas of Chapter \ref{chap:segmentingLossDistributions}.
    
    Regarding further related work, there are of course many publications on classical (real-valued) game theory, and the books mainly used for this thesis are \cites{bib:fudenbergGameTheory,bib:nisanAlgorithmicGameTheory,bib:matsumotoGameTheory}.
    There is also literature on stochastic orders, e.g. \cite{bib:shakedStochasticOrders}.
    There are however surprisingly few publications that concern models similar to distribution-valued games, or even other generalized payoffs that are not real numbers. There are multiple well-known types of games that include randomness, but none of them match the model of distribution-valued games: 
    There is the notion of \emph{stochastic games} (see \cite{bib:shapleyStochasticGames}) which are played in multiple rounds and include a game state that changes between the rounds and determines the payoff structure. In these games, only the state transitions depend on chance, but not the payoffs. 
    Another variant are \emph{moves by nature} in the theory of \emph{extensive-form games}, which occur for example in \emph{Bayesian games} (see \cite[Chapter 6 and Section 8.3]{bib:fudenbergGameTheory}): While payoffs can depend on chance in such models, an important difference lies in their equilibrium concepts, since the payoff distributions are condensed to an expected value before comparing them, unlike the comparison by \emph{stochastic orders} in our model. 
    The models closest to Rass' distribution-valued games published prior to it seem to be \emph{stochastic cooperative games} which use distribution-valued outcomes rated by stochastic orders, and \emph{non-cooperative games with fuzzy-number payoffs}. 
    The former (e.g. \cite{bib:suijsCooperativeGamesWithStochasticPayoffs,bib:fernandezCoresOfStochasticCoopGamesWithStochasticOrders}) however concern only cooperative games whose theory differs from the theory of non-cooperative games that our model lives in. 
    The latter uses fuzzy numbers instead of probability distributions (e.g. \cite{bib:maedaCharacterizationOfEquilibriumFuzzyPayoff, bib:cevikelSolutionsFuzzyMatrixGames}): The exact relationship between fuzzy and probabilistic methods is rather complicated (see e.g. the discussion in \cite{bib:kandelDistinctionBetweenFuzzyAndStatisticalMethods}), yet they are certainly different concepts.
    
    There is also not much literature on the more general case of games with payoffs in an arbitrary set ordered by a preorder.
    This thesis defines its own generalized model in Section \ref{sec:generalizedPayoffGames} so we can properly work with such games without ambiguity about the definitions. Similar definitions are given in \cite{bib:rozenEquilibriaInGamesWithOrderedOutcomes}, and apparently already in the much older Russian-language article \cite{bib:vorobevThePresentStateOfGameTheory} it cites, but at least \cite{bib:rozenEquilibriaInGamesWithOrderedOutcomes} does not include a definition for mixed extensions as general as we need.
    It should be remarked though that \cite[Section 1.2.1]{bib:nisanAlgorithmicGameTheoryCh1Basic} mentions an even more general model which, instead of using payoffs, defines the players' preferences directly between the strategy profiles.
    
    The tail-ordered games defined by Rass are closely related to games with vector-valued payoffs ordered by a lexicographic order, as will be worked out in Section \ref{sec:equilibriaInTailOrderedGames}. No literature on such lexicographically-ordered games could be found either:
    The closest examples are the articles \cites{bib:salvadorLeximin, bib:quantPropernessProtectiveness} about \emph{leximin} preferences used in social choice theory and the related concept of \emph{protective behavior} in games, and the article \cite{bib:bouyerConcurrentGamesWithOrderedObjectives} which concerns graph games and orders objectives lexicographically (among other ways).
    The tail order itself is defined based on moment sequences of probability distributions: A relevant question for the thesis thus is which real sequences are moment sequences, a question known as the \emph{moment problem}. The literature on this subject is rich, and there are both classical and more recent publications, e.g. \cite{bib:hausdorffMomentprobleme,bib:akhiezerClassicalMomentProblem,bib:chiharaIndeterminateHamburgerMomentProblems,bib:schmuedgenTheMomentProblem}. 
    However the question whether two moment sequences can alternate, which is relevant to the tail order and is answered in Section \ref{subsec:tailOrderTotality}, seems not to have been considered in the literature before.
    
    \enlargethispage*{2\baselineskip}
    Finally, the theory of multi-objective games and Pareto-Nash equilibria we put to use in {Chapter \ref{chap:segmentingLossDistributions}} is well-developed, starting with the papers \cite{bib:blackwellVectorPayoffs} and \cite{bib:shapleyMultiobjectiveEquilibriumPoints}. The more recent \cite{bib:paretoNashEquilibria} generalizes some theorems from the earlier papers. Furthermore, an overview of solution concepts other than Pareto-Nash equilibria is given in \cite{bib:ghoseSolutionConceptsMultiobjective}.

    \chapter{Mathematical Preliminaries: Probability and Decision Theory}
    \label{chap:mathematicalPreliminaries}
    
    The following pages present basic concepts from probability and decision theory that are relevant as background for the thesis.
    The first section contains standard definitions from probability and measure theory that can be found in textbooks such as \cite{bib:billingsleyProbabilityAndMeasure}.
    The section about decisions under uncertainty and risk is adopted from \cite[Sections 3.5 - 5.3]{bib:doersamGrundlagenDerEntscheidungstheorie} and \cite[Sections I.1, I.2]{bib:wakkerProspectTheory}.
    The thesis uses standard mathematical notation, yet some notations used are worth mentioning:
    We write $\N = \set{1, 2, 3, \dots}$ for the natural numbers not including zero, and $\N_0 \coloneqq \N \cup \set{0}$.
    The non-negative real numbers are denoted by $\Rp$. If $n \in \N$, we write $[n] \coloneqq \set{1, 2, \dots, n}$ for the first $n$ natural numbers.
    If $A \subseteq \R$ is a set, we write $\1_A$ for its \emph{characteristic function} that is defined by $\1_A(x) = 1$ iff $x \in A$, and $\1_A(x) = 0$ otherwise.
    We say that two real sequences $(a_n)_{n \in \N_0}$, $(b_n)_{n \in \N_0}$ \emph{alternate} if for every $K \in \N_0$ there are indices $m, l \geq K$ such that $a_m > b_m$ and $a_l < b_l$. We say that two functions $f, g: \R \to \R$ \emph{alternate} on an interval $[a, b]$ if for every $x_0 \in (a, b)$ there are $x_1, x_2 \in (x_0, b)$ such that $f(x_1) < g(x_1)$ and $f(x_2) > g(x_2)$.
    
    \section{Probability Theory}
    Probability theory presents a framework to do calculations with probabilities assigned to the outcomes of probabilistic experiments. It uses measure theory to unify the different settings needed for finitely, countably infinitely and uncountably infinitely many outcomes (\cite[Section 1]{bib:billingsleyProbabilityAndMeasure}). This section quickly goes over the basic concepts needed in the thesis.
    
    Assume $\Omega$ is some set, usually representing probabilistic outcomes. 
    A \emph{sigma algebra} $\A \subseteq \Pot(\Omega)$ represents probabilistic events and is a non-empty system of subsets of $\Omega$ which is closed under complements and countable intersections.
    A \emph{signed measure} $\mu$ on $(\Omega, \A)$ is a map $\mu: \A \to \R \cup \set{-\infty, \infty}$ that satisfies $\mu(\emptyset) = 0$, and $\mu(\bigcup_{n \in \N} A_n) = \sum_{n \in \N}\mu(A_n)$ for any countable collection $(A_n)_{n \in \N}$ of pairwise-disjoint sets from $\A$ \cite[Problem 32.12]{bib:billingsleyProbabilityAndMeasure}.
    $\mu$ is a \emph{measure} if it only takes non-negative values, and is \emph{finite} if $\mu(\Omega) < \infty$.
    A \emph{probability measure} $P$ is a measure that satisfies $P(\Omega) = 1$.
    The tuple $(\Omega, \A, P)$ forms a \emph{probability space}, and for any set $A \in \A$, $P(A)$ represents its probability \cite[Section 2]{bib:billingsleyProbabilityAndMeasure}.
    A convex combination (\emph{mixture}) $\alpha P_1 + (1-\alpha) P_2,\, \alpha \in [0, 1]$ of probability measures $P_1, P_2$ is again a probability measure.
    
    If $\Omega = \R$, one needs to find a suitable sigma algebra: While the power set $\Pot(\R)$ is a sigma algebra, it is “too large” as it contains non-well behaved sets which prevent useful measures (\emph{Vitali's Theorem}).
    One resorts to the \emph{Borel sigma algebra} $\B$, defined as the smallest sigma algebra containing all intervals $[a, b] \subseteq \R$.
    The sets $B \in \B$ are called \emph{Borel sets}, and measures on $\B$ are called \emph{Borel measures}.
    An ubiquitous Borel measure is the \emph{Lebesgue measure} $\lambda$ which assigns Borel sets their natural “volume” and is uniquely determined as the Borel measure that maps closed intervals to their length, i.e. $\lambda([a, b]) \coloneqq b-a$, $a \leq b$.
    The \emph{point-mass} (or \emph{Dirac}) \emph{measure} $\delta_x$ is the Borel probability measure that assigns all probability mass to one point $x \in \R$, i.e. $\delta_x(B) = \1_B(x)$.
    \cite[p.23, p.45-47, p.177]{bib:billingsleyProbabilityAndMeasure}

    A function $f: \Omega \to \R$ is \emph{Borel-measurable} if $\forall B \in \B: f\inv(B) \in \A$.
    Its \emph{Lebesgue integral with respect to a Borel measure $\mu$}, denoted by $\lintegral{}{f}{\mu}$, can be defined in three cases:
    First, the integral can be defined if $f$ only takes non-negative values.
    Secondly it can be defined if $f$ takes negative values, but either its positive part $f^+ = \max(0, f)$ or its negative part $f^- = \max(0, -f)$ have a finite integral.
    If both parts have a finite integral, $f$ is called \emph{integrable}. This is equivalent to $\lintegral{}{\abs{f}}{\mu} < \infty$.
    In contrast, one part has an infinite integral, the integral of $f$ is either $\infty$ or $-\infty$; if both parts have infinite integrals, the integral cannot be defined (for example, $\id: x \mapsto x$ has no integral).
    The integration can be restricted to a set $A \in \A$, denoted as $\lintegral{A}{f}{\mu} \coloneqq \lintegral{}{\1_A*f}{\mu}$. To specify the integration variable, one writes $\lintegral{A}{f(x)}{\mu(x)}$.
    If $\mu = \lambda$ and $\Omega = \R$, the integral coincides with the Riemann integral in many cases that occur in practice, in particular if $f$ is bounded, has bounded domain and is Riemann-integrable. \cite[Sections 13, 15-17]{bib:billingsleyProbabilityAndMeasure}
    
    A \emph{real-valued random variable} on $(\Omega, \A)$ is a function $X: \Omega \to \R$ which is Borel-measurable, i.e. $\forall B \in \B: X\inv(B) \in \A$. 
    A convenient notation for such preimages is $\set{X \in B} \coloneqq X\inv(B)$, such that $P(\set{X \in B})$ denotes the probability that “$X$ takes a value in $B$”.
    $X$ induces a Borel probability measure, its \emph{distribution} or \emph{pushforward measure} $P^X: B \mapsto P(\set{X \in B})$. One can think of $P^X$ as only describing the random variable's distribution while ignoring the details of the underlying $(\Omega, P)$.
    Closely related is the \emph{(cumulative) distribution function} (cdf) $F_X: \R \to [0, 1], x \mapsto P(\set{X \leq x})$.
    $P^X$ and $F_X$ uniquely determine each other and both represent the distribution of $X$; on the other hand, there can be many different random variables on a fixed probability space that all have the same distribution.
    Yet many properties of $X$ only depend on $P^X$, and can therefore be formulated in terms of probability measures.
    \cite[Sections 14, 20]{bib:billingsleyProbabilityAndMeasure}
    
    Let $P$ be a Borel probability measure.
    $P$ is \emph{discrete} if $P(S) = 1$ for a countable set $S = \set{x_1, x_2, \dots}$: It can then be represented by its \emph{probability mass function} (pmf) $f: \R \to [0, 1], x \mapsto P(\set{x})$ that satisfies $\sum_{x_i \in S} f(x_i) = 1$, and $x \notin S \Rightarrow f(x) = 0$.
    $P$ is \emph{absolutely continuous} (AC) if it has a \emph{(probability) density function} (pdf) $f: \R \to \Rp$, such that $\forall B \in \B: P(B) = \lintegral{B}{f(x)}{\lambda(x)}$.
    $P$ is \emph{continuous} if its distribution function $F$ is continuous, or equivalently $P(\set{X = c}) = 0$ for all $c \in \R$.
    Absolutely continuous probability measures are continuous, but the converse is not true in general (a counterexample is the \emph{Cantor distribution}): In particular, there are probability measures which are neither AC nor discrete (nor a mixture of AC and discrete measures).
    Both absolute continuity and discreteness are special cases of a more general concept: $P$ has a density $f$ \emph{with respect to a measure $\mu$} if $\forall B \in \B: P^X(B) = \lintegral{B}{f(x)}{\mu(x)}$; For discrete distributions, the mass function $f$ can be seen as a density with respect to the \emph{counting measure} $\#$ which assigns to each set its cardinality.
    If $P$ has a $\mu$-density $f$, this allows to compute the integral of some $P$-integrable function $g$ with respect to $P$ as $\lintegral{A}{g}{P} = \lintegral{A}{f g}{\mu}$. 
    \cite[Sections 16, 20, 31]{bib:billingsleyProbabilityAndMeasure}
    
    Some set $A \subseteq \R$ is \emph{null set} with respect to measure $\mu$ if $A$ is contained in a measurable set $\tilde{A}$ with $\mu(\tilde{A}) = 0$.
    A condition holds \emph{almost surely} with respect to a probability measure $P$ if the set where it does not hold is a null set.
    If $P$ has a $\mu$-density $f$, this implies that all null sets with respect to $\mu$ are null sets with respect to $P$ ($P$ is \emph{absolutely continuous} with respect to $\mu$).
    A probability measure $P$ is \emph{supported} on a measurable set $A$ if $P(A) = 1$. If $P$ is a Borel probability measure on $\R$, we write $\supp(P)$ for the \emph{support} of $P$: We use the convention that $\supp(P)$ is the smallest closed set with probability one, i.e. the intersection of all closed $B \in \B$ such that $P(B) = 1$.
    \cite[p.63, p.170, Theorem 31.7]{bib:billingsleyProbabilityAndMeasure}
    
    The \emph{expected value} of a random variable is denoted by $E(X) \coloneqq \lintegral{\Omega}{X}{P} = \lintegral{\R}{x}{P^X(x)}$. It is only defined if $X$ is integrable, i.e. $E(\abs{X}) = \lintegral{\Omega}{\abs{X}}{P} < \infty$. We write $E(P) \coloneqq \lintegral{\R}{x}{P(x)}$ if $P$ is a Borel probability measure and $\lintegral{\R}{\abs{x}}{P(x)} < \infty$.
    The expected value is the first of the distribution's \emph{moments}:
    If $p \in \N_0$, we say that $X$ is \emph{$p$-integrable}, or $X \in \mathcal{L}^p$, if $E(\abs{X}^p) < \infty$;
    In this case, define the \emph{$p$-th moment} of $X$ as $E(X^p)$. Analogously, define the $p$-th moment of a Borel measure $P$ by $m_p(P) \coloneqq \lintegral{\R}{x^p}{P(x)}$ if $\lintegral{\R}{\abs{x}^p}{P(x)} < \infty$.
    $P$ is a probability measure if and only if $m_0(P) = 1$, because $m_0(P) = P(\R)$.
    While $\mathcal{L}^p$ contains all random variables over a fixed probability space that have a moment of $p$-th order, there is no standard notation for the set of Borel probability measures that have moments of $p$-th order. In this thesis, we denote this set by $M^p$, and write $M \coloneqq \bigcap_{p \in \N} M^p$ for the set of Borel probability measures that have moments of all orders.
    \cite[Section 21]{bib:billingsleyProbabilityAndMeasure}
    
    \section{Decision Theory}
    
    Decision theory is the theory of selecting one of multiple alternatives in a scenario where the exact outcomes of the alternatives are uncertain.
    There is a state space $S$, also called \emph{states of nature}: Exactly one of the states is considered to be true, but it is unknown which.
    There is a set of \emph{outcomes} which we will assume to be $\R$.
    The alternatives one has to decide between are called \emph{prospects} or \emph{lotteries}, and are modeled as maps $x: S \to \R$. Prospects are assumed to only take finitely many values. There is a short notation for prospects: E.g. if $S = \set{s_1, s_2, s_3}$, one writes $(s_1: 50, s_2: 30, s_3: 121)$ for the prospect that assigns to the three states the values $50, 30$, and $21$, respectively. \cite[Section 1.1]{bib:wakkerProspectTheory}
    
    The theory distinguishes between \emph{decisions under uncertainty} and \emph{decisions under risk}. The difference is that under \emph{risk}, the states of nature have probabilities assigned to them, while under \emph{uncertainty}, no probabilities are assumed.
    A common way to transform a probability under uncertainty into one under risk is to assume that all states of nature are equally likely (\emph{Laplace's method}).
    There are other methods to make a decision under uncertainty, for example the \emph{maximin} and \emph{maximax} methods, \emph{Huwicz' rule}, and the \emph{Savage-Niehans} or \emph{regret minimization rule}. \cite[Section 4]{bib:doersamGrundlagenDerEntscheidungstheorie}
    
    For decisions under risk, the short notation is changed to represent the probabilities instead of the states, e.g. $(5\%: 50, 10\%: 30, 85\%: 121)$.
    Prospects can be interpreted as random variables in this model, and it seems natural to compare them by their \emph{expected values}. However, this method is not considered to accurately represent every decision maker's attitude to risk:
    For example, it seems plausible that many people would prefer the certain payoff {(100\%: {10 000}\,\euro)} over the gamble (1\%:  1 000 001\,\euro, 99\%:  0\,\euro), even though the latter prospect has a greater expected value.
    Bernoulli argued that monetary rewards have diminishing marginal returns: The more money someone gets, the less he or she cares about getting one additional unit. In other words, the \emph{utility} the money has for the decision maker increases less than proportionally to the amount.
    In formal terms, one associates with a decision maker a \emph{utility function} $u: \R \to \R$, defined as a monotonically increasing function from the outcome set to the reals.
    To decide between two prospects by \emph{expected utility}, one applies a utility function $u$ to the outcomes and decides between the resulting prospects by expected value.
    A utility function that is \emph{concave}, i.e. grows slower than proportionally to its argument, is associated with \emph{risk-averse} behavior.
    On the other hand, a \emph{convex} utility function is associated with \emph{risk-seeking} behavior. Commonly, functions such as $x \mapsto x^2, x \mapsto \sqrt{x}$, or $x \mapsto \ln(x)$ are used.
     \cites[Sections 2.1, 2.2]{bib:wakkerProspectTheory}[Section 5]{bib:doersamGrundlagenDerEntscheidungstheorie}
    
    More general preferences between prospects can be captured by defining a binary \emph{preference relation} $\preccurlyeq$ between prospects:
    A relationship $x \succcurlyeq y$ expresses that the decision maker “is willing to choose $x$ from $\set{x, y}$” (\cite[p. 14]{bib:wakkerProspectTheory}).
    Reasonable assumptions for preference relations include \emph{reflexivity} and \emph{transitivity}, in which case the preference relation is a \emph{preorder} on the prospects.
    The theory as described in \cite{bib:wakkerProspectTheory} restricts prospects such that they can only take finitely many values. If interpreted as random variables, this means that only random variables with finite support are considered. In Chapter \ref{chap:gamesWithDistributionalPayoffs}, we will drop this restriction and define preference relations between arbitrary Borel probability distributions on the real numbers, then called \emph{stochastic orders} (e.g. \cite{bib:shakedStochasticOrders}).
    
    \chapter{Non-Cooperative Game Theory}
    \label{chap:nonCooperativeRealValuedGameTheory}
    In this chapter, we will introduce the basic notions of non-cooperative game theory.
    Game theory is applied in scenarios where multiple agents, called \emph{players}, make decisions independently of another, and each tries to achieve the best outcome for themselves. This theory is called \emph{non-cooperative game theory}, and it is characterized by players not being able to make enforceable agreements \cite[p.1]{bib:harsanyiTheoryOfEquilibriumSelection}.
    In contrast, there is \emph{cooperative game theory} which lets players cooperate and form coalitions to achieve a better outcome.
    The underlying theory of the two variants is quite different, and this thesis focuses only on the non-cooperative theory.
    The classical example of a non-cooperative game is the \emph{prisoner's dilemma}:
    \begin{ex}[Prisoner's Dilemma]
        Two criminals were caught and are being held in different cells. The police does not have substantial evidence against them, so a deal is offered to each of the two: If one prisoner confesses to the crime and hands over evidence that helps prosecuting his partner, the prisoner can go into a witness protection program and stay out of prison, while the partner will be sentenced to five years in prison. Yet if both prisoners choose to confess, the prosecution does not need a key witness, and both will have to serve an (only slightly reduced) sentence of four years. However, if both prisoners refuse to confess, the prosecutors, based on the little evidence they have, will only be able to sentence them to one year in prison each.
        
        The game can be represented by a table: 
        The rows represent the first player's strategies, the columns the second player's strategies,  and the cells contains the years the first and second player face in prison, respectively.
        \begin{gather*}
            \centering
            \begin{tabular}{r|c|c|}
            	                    & Prisoner 2 does not confess & Prisoner 2 confesses \\ \hline
            	Prisoner 1 does not confess &       1\;/\;1       &   5\;/\;0    \\ \hline
            	   Prisoner 1 confesses     &       0\;/\;5       &   4\;/\;4    \\ \hline
            \end{tabular}
        \end{gather*}

        So what should the prisoners do?
        If they were able to make a binding contract about the situation, they would surely agree not to confess, and both only spend one year in prison. But since there is no way to do so, each prisoner's fate depends on the decision of his partner, and both have to watch out not to be betrayed by their partner and get an even longer prison sentence than by confessing.
        Therefore in non-cooperative game theory, somewhat counter-intuitively, the solution to the game is that both prisoners confess and both face four years in prison instead of just one.
        They both have to accept going to prison for four years, since they cannot make a binding agreement, and this is the only way to avoid being betrayed by the other prisoner.
        \label{ex:prisonersDilemma}
        \label{ex:gameTheoryIntroductoryExample}
    \end{ex}

    We will now define such games and their solution concepts mathematically.

    \begin{defn}[see {\cites[p.4]{bib:fudenbergGameTheory}[p.19]{bib:matsumotoGameTheory}[p.9]{bib:nisanAlgorithmicGameTheoryCh1Basic}}]~\\
        A real-valued \emph{normal form game} $G = (n, (S_1, \dots, S_n), (u_1, \dots, u_n))$ consists of 
        \begin{itemize}
            \item the number of players $n \in \N$,
            \item for each player $k \in [n]$, a set $S_k$ of available strategies,
            \item for each player $k \in [n]$, a payoff function $u_k: S \to \R$, where $S = \bigtimes\limits_{i \in [n]} S_i$ denotes the set of all possible combinations of the players' strategies.
        \end{itemize}
        Elements of $S_k$ are called \emph{strategies}, elements of $S$ are called \emph{strategy profiles}. $G$ is called \emph{finite} if $S$ is a finite set.
        \label{def:realValuedGames}
    \end{defn}

    \begin{rem}
        In this chapter, with the term \emph{game} we always mean a real-valued normal form game as in Definition \ref{def:realValuedGames}.
    \end{rem}
    
    Instead of specifying payoff functions $u_k$, we can specify cost functions $c_k$ with the semantics that players want to maximize payoffs, but minimize costs. For example, the “years in prison” in Example \ref{ex:prisonersDilemma} correspond to costs instead of payoffs. We can switch between those viewpoints by setting $u_k = - c_k$. For a strategy profile $s = (s_1, \dots, s_n) \in S$ and some player $k$, it is sometimes convenient to use the notation $s_{-k}$ for $s$ with the $k$-th coordinate omitted, and write $u_k(s_k, s_{-k})$ instead of $u_k(s)$. \cite[p.9-10]{bib:nisanAlgorithmicGameTheoryCh1Basic}
    
    \begin{notation}
        In finite two-player games, it is convenient to specify payoffs by matrices:
        We will then use matrices $A = (a_{ij})_{\scriptscriptstyle i\in[\abs{S_1}],\,j\in[\abs{S_2}]}, B = (b_{ij})_{\scriptscriptstyle i\in[\abs{S_1}],\,j\in[\abs{S_2}]}$ such that $a_{ij}$ and $b_{ij}$ correspond to the first/second player's payoffs under the $i$-th strategy of the first player and the $j$-th strategy of the second player.
    \end{notation}
    
    A common special case are zero-sum games:
    
    \begin{defn}[{\cite[p.4]{bib:fudenbergGameTheory}}]
        A \emph{two-player zero-sum game} is a game with two players, such that
        \begin{gather*} 
            \forall s \in S: u_1(s) + u_2(s) = 0.
        \end{gather*}
    \end{defn}

    In a zero-sum game, the two players play strictly “against each other”, and are adversaries in every possible scenario:
    one player wins exactly what the other loses, and there is no outcome that corresponds to mutual benefit. As \cite{bib:fudenbergGameTheory} notes, the important feature of these games is that the payoffs sum to a constant. 
    Choosing this constant as zero is only for normalization.
    
    \section{Solution Concepts}
    Reasoning about rational strategies, as done in Example \ref{ex:gameTheoryIntroductoryExample}, is formalized by \emph{solution concepts}.
    The most prominent one is the concept of \emph{Nash equilibria}.
    Before we introduce Nash equilibria, we start with the simpler solution concept of \emph{dominant strategies}, where a player's best strategy is independent the other players' actions.
    
    \begin{defn}[Dominant strategy, see \cite{bib:nisanAlgorithmicGameTheoryCh1Basic}]
        Let $G$ be a game with $n$ players.
        A strategy $s_k \in S_k$ for a player $k \in [n]$ is a \emph{dominant strategy for player $k$} if 
        \begin{gather*}  
            \forall \tilde{s} \in S: u_k(s_k, \tilde{s}_{-k}) \geq u_k(\tilde{s}). 
        \end{gather*} 
        A strategy profile $(s_1, \dots, s_n) \in S$ is a \emph{dominant strategy solution} if all its individual strategies $s_i$ are dominant strategies.
    \end{defn}
    
    In the prisoner's dilemma \ref{ex:prisonersDilemma}, confessing is a dominant strategy for both prisoners:
    For example, if player 2 confesses, then player 1 is best off by confessing as well. If on the other hand player 2 does not confess, player 1 is \emph{also} best off by confessing, i.e. betraying player 2 and going into witness protection without a prison sentence.
    Dominant strategies lead to a obvious solution of the game if they exist, but many games do not have a dominant-strategy solution.
    A more sophisticated solution concept are Nash equilibria, which encode that for a given strategy profile, no player has an incentive to change their strategy when all other player's strategies stay as before. Nash equilibria represent a certain form of stability in a strategy profile.
    
    \begin{defn}[Nash equilibrium, see {\cite[p.11]{bib:fudenbergGameTheory}}]
        Let $G$ be a game with $n$ players.
        A strategy profile $s \in S$ is a \emph{Nash equilibrium} if
        \begin{gather*} 
            \forall k \in [n], \forall \tilde{s}_k \in S_k:~ u_k(s_k, s_{-k}) \geq u_k(\tilde{s}_k, s_{-k}).
        \end{gather*} 
        \label{def:nashEquilibriumRealValued}
    \end{defn}

    \begin{lemma}[Row/Column Criterion, see {\cite[p.14]{bib:matsumotoGameTheory}}]
        In a finite two-player game with payoff matrices $A, B$, the Nash equilibria correspond
        exactly to the indices $(i, j)$ where  $a_{ij}$ is maximal in its column, and $b_{ij}$ is maximal in its row.
        If the game is zero-sum, these are just the indices where $a_{ij}$ is both maximal in its column and minimal in its row.
    \end{lemma}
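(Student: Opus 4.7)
The plan is to unfold the definition of Nash equilibrium (Definition \ref{def:nashEquilibriumRealValued}) in the two-player matrix setting and read off each conjunct as a row/column optimality condition. Since the statement is an ``if and only if''-characterization, both directions will follow from the same chain of equivalences, so a single computation suffices.

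First I would fix a strategy profile $(i, j) \in [|S_1|] \times [|S_2|]$ and recall that in matrix notation the payoffs are $u_1(i, j) = a_{ij}$ and $u_2(i, j) = b_{ij}$. Applying Definition \ref{def:nashEquilibriumRealValued} with $n = 2$ and $k = 1$ gives the condition $a_{ij} \geq a_{\tilde{i} j}$ for every $\tilde{i} \in [|S_1|]$, which by definition means $a_{ij}$ is maximal in its column (column $j$). Applying the same definition with $k = 2$ gives $b_{ij} \geq b_{i \tilde{j}}$ for every $\tilde{j} \in [|S_2|]$, i.e.\ $b_{ij}$ is maximal in its row (row $i$). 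Since the conjunction of these two conditions is logically equivalent to $(i, j)$ being a Nash equilibrium, the first claim follows.

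For the zero-sum addendum, I would use the defining relation $u_1(s) + u_2(s) = 0$, which in matrix terms becomes $b_{ij} = -a_{ij}$ for all indices. Then $b_{ij}$ is maximal in row $i$ precisely when $-a_{ij}$ is maximal in row $i$, which in turn holds precisely when $a_{ij}$ is minimal in row $i$. Substituting this into the characterization from the general case yields the zero-sum formulation.

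There is no real obstacle here: the statement is essentially a translation of the definition into the finite two-player matrix notation, and the proof is a short chain of equivalences. The only minor care needed is to keep track of which index varies in each maximization (the strategy of the player \emph{whose} payoff is being compared) so that ``maximal in its column'' versus ``maximal in its row'' is assigned correctly; this is why the matrix convention in the preceding notation box (rows indexed by player $1$'s strategies, columns by player $2$'s strategies) has to be invoked explicitly.
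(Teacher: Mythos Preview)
Your proposal is correct and follows essentially the same approach as the paper: both proofs unfold Definition~\ref{def:nashEquilibriumRealValued} for the two players separately, read off the column/row maximality conditions, and then substitute $b_{ij} = -a_{ij}$ for the zero-sum addendum. There is no substantive difference in method or detail.
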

    \begin{proof}
        The criterion follows directly from the definition:
        Let $s_{1, i} \in S_1$ and $s_{2, j} \in S_2$ be the $i$-th/$j$-th strategy, respectively.
        $a_{ij}$ is maximal in its column if and only if $u_1(s_{1, i}, s_{2, j}) \geq u_1(s_{1, \tilde{i}}, s_{2, j})$ for all $s_{1, \tilde{i}} \in S_1$.
        Likewise, $b_{ij}$ is maximal in its row if and only if $u_1(s_{1, i}, s_{2, j}) \geq u_1(s_{1, i}, s_{2, \tilde{j}})$ for all $s_{2, \tilde{j}} \in S_2$.
        In the zero-sum case, as $a_{ij} = -b_{ij}$, maximizing $b_{ij}$ over all $j$ is equivalent to minimizing $a_{ij}$ over all $j$.
    \end{proof}

    \begin{lemma}
        In a two-player zero-sum game, all Nash equilibria have the same payoff.
        The unique payoff of player 1 under a Nash equilibrium is then called the \emph{value} of the game.
    \end{lemma}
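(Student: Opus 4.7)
The plan is to work directly from the definition of Nash equilibrium (Definition \ref{def:nashEquilibriumRealValued}) together with the zero-sum relation $u_2 = -u_1$, without relying on the matrix form (so the argument covers infinite strategy sets as well). Let $s = (s_1, s_2)$ and $t = (t_1, t_2)$ be two Nash equilibria; the goal is to show $u_1(s) = u_1(t)$, which implies $u_2(s) = u_2(t)$ by the zero-sum condition.

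The key step is a four-term cyclic chain of inequalities between the payoffs at the ``mixed'' strategy profiles $(t_1, s_2)$ and $(s_1, t_2)$. First, the Nash condition for $s$ applied to player 1 gives $u_1(s_1, s_2) \geq u_1(t_1, s_2)$. Second, the Nash condition for $s$ applied to player 2 gives $u_2(s_1, s_2) \geq u_2(s_1, t_2)$; using $u_2 = -u_1$ and flipping the inequality, this becomes $u_1(s_1, t_2) \geq u_1(s_1, s_2)$. The analogous two conditions for the equilibrium $t$ yield $u_1(t_1, t_2) \geq u_1(s_1, t_2)$ and $u_1(t_1, s_2) \geq u_1(t_1, t_2)$. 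Chaining these four inequalities produces
\begin{gather*}
u_1(s_1, s_2) \geq u_1(t_1, s_2) \geq u_1(t_1, t_2) \geq u_1(s_1, t_2) \geq u_1(s_1, s_2),
\end{gather*}
which forces equality throughout, and in particular $u_1(s) = u_1(t)$.

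The second sentence of the lemma (defining the \emph{value} of the game) is then merely a naming convention, justified by the uniqueness just established; nothing further needs to be proved. There is no real obstacle here — the only subtlety is remembering to invoke player 2's Nash condition in addition to player 1's, since without using both one only obtains a single inequality $u_1(s_1, s_2) \geq u_1(t_1, s_2)$, which is not enough. The zero-sum hypothesis is precisely what converts player 2's condition into a \emph{reverse} inequality on $u_1$, closing the cycle.
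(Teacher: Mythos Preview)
Your proof is correct and follows essentially the same approach as the paper: both derive a chain of inequalities by alternately applying the Nash condition for each equilibrium and each player, using the zero-sum relation $u_2=-u_1$ to flip player~2's inequalities into statements about $u_1$. The only cosmetic difference is that the paper writes out half the chain and then invokes a ``symmetric argument'' for the reverse inequality, whereas you spell out the full four-step cycle explicitly.
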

    \begin{proof}
        \cite[p.15]{bib:matsumotoGameTheory} gives a proof for the case of finitely many strategies using the row-column criterion, but the argument works in the general setting:
        Let $(s_1, s_2)$, $(\tilde{s}_1, \tilde{s}_2)$ be two Nash equilibria.
        Then
        \begin{gather*}
            u_1(s_1, s_2) \geq u_1(\tilde{s}_1, s_2) = -u_2(s_2, \tilde{s}_1) \geq -u_2(\tilde{s}_2, \tilde{s}_1) = u_1(\tilde{s}_2, \tilde{s}_1).
        \end{gather*}
        By a symmetric argument, $u_1(\tilde{s}_2, \tilde{s}_1) \geq u_1(s_1, s_2)$, and therefore $u_1(\tilde{s}_2, \tilde{s}_1) = u_1(s_1, s_2)$.
    \end{proof}
    
    The next example illustrates the solution concepts we introduced:
    
    \begin{ex}[Dominant Strategy Solutions and Nash Equilibria]
        In example \ref{ex:prisonersDilemma} we saw a game with a dominant strategy solution, which in fact also is the unique Nash equilibrium of the game.
        Now the game in (a)
        shows that Nash equilibria and dominant strategy solutions are indeed different concepts. The first player has no dominant strategy, so there is no dominant strategy solution. But the game does have a Nash equilibrium: If both players play their first strategy, the payoff 1 for the first player is maximal in its column, and the payoff 1 for the second player is maximal in its row.
        
        The game in (b)
        shows that Nash equilibria need not be unique: Both the upper-left and the lower-right cell correspond to Nash equilibria.
        
        \begin{figure}[h]
            \centering
            \begin{subfigure}[t]{0.49\textwidth}
                \centering
                \begin{tabular}{c|c|c|}
                	   &   b1    &   b2    \\ \hline
                	a1 & 1\,/\,1 & 0\,/\,0 \\ \hline
                	a2 & 0\,/\,3 & 1\,/\,2 \\ \hline
                \end{tabular}
                \caption{Game with Nash equilibrium, but no dominant strategy solution.}
                \label{fig:nashEquilibriumNoDominantStrategy}
            \end{subfigure}
            \begin{subfigure}[t]{0.49\textwidth}
                \centering
                \begin{tabular}{c|c|c|}
                    &   b1    &   b2    \\ \hline
                    a1 & 1\,/\,-1 & 0\,/\,-2 \\ \hline
                    a2 & 0\,/\,-2 & 5\,/\,3 \\ \hline
                \end{tabular}
                \caption{Game with two Nash equilibria.}   
                \label{fig:twoNashEquilibria}
            \end{subfigure}
        \end{figure}
    \end{ex}
    
    \section{Mixed-Strategy Extensions}
    The example games up to here always had a finite number of strategies for each player.
    However, such finite games do not always have Nash equilibria. We next introduce the concept of \emph{mixed strategies}: We will allow each player to “mix” between multiple strategies, interpreted as playing each of them with a certain probability.
    The next example shows how mixed strategies can be used to find an equilibrium for the Rock-Paper-Scissors game.
    
    \begin{ex}[Rock-Paper-Scissors]
        In this game-theoretic formulation of the well-known game \emph{Rock-Paper-Scissors},
        the players have strategy sets $S_1 = S_2 = \set{\text{Rock}, \text{Paper}, \text{Scissors}}$, where paper beats rock, rock beats scissors, and scissors beat paper. The game is zero-sum, and can be represented by the first player's payoff matrix:
        \begin{figure}[h]
            \centering
            \begin{tabular}{c|c|c|c|}
            	         & Rock & Paper & Scissors \\ \hline
            	  Rock   &  0   &  -1   &    1     \\ \hline
            	 Paper   &  1   &   0   &    -1    \\ \hline
            	Scissors &  -1  &   1   &    0     \\ \hline
            \end{tabular}
        \end{figure}
    
        There is no Nash equilibrium if the players only have those three strategies available: For example, if player 1 plays rock, player 2 can beat it by playing paper; but if player 2 plays paper, player 1 has an incentive to switch to scissors, and so on. Exactly this kind of instability is not allowed for a Nash equilibrium, so this example shows that not all games have Nash equilibria.
        
        Instead of committing to a single hand gesture and play it, players should rather play one of the three gestures unpredictably: While a player committed to a single one of the three strategies can be easily beaten, this is not the case if the player picks each of the three strategies with equal probability.
        \label{ex:rockPaperScissors}
    \end{ex}

    We will now define mixed-strategy extensions, where the randomization described in the example becomes possible: The strategy of playing each of the three gestures with probability $\frac{1}{3}$ becomes a valid strategy itself, a \emph{mixed strategy}. The original strategies where no mixing occurs are then called \emph{pure strategies}. The strategies in a mixed-strategy extension are functions that assign to each pure strategy a probability, and the payoffs are calculated as expected values.

    \begin{defn}[Mixed Extensions, e.g. \cite{bib:matsumotoGameTheory}]
        Let $G = (n{,}(S_1{,\dots,}S_n){, }(u_1{,\dots,}u_n))$ be a finite normal-form game.
        Its \emph{mixed extension} $\hat{G} = (n, (\Delta_1, \dots, \Delta_n), (\hat{u}_1, \dots, \hat{u}_n))$ is defined by the following components
        for each player $k \in [n]$:
        \begin{itemize}
            \item 
            The strategy set $\Delta_k$ represents \emph{mixed strategies}:
            \footnote{The exact notation used differs across the literature. Our notation $\Delta_k$ for the mixed-strategy sets is used, for example, in \cite{bib:quantPropernessProtectiveness}.}
            \begin{gather*} 
                \Delta_k \coloneqq \biggset{ \delta : S_k \to \Rp \biggmid \sum_{s_k \in S_k} \delta(s_k) = 1 } \subseteq \Rp^{S_k}.
            \end{gather*} 
            
            \item $\Delta \coloneqq \bigtimes\limits_{i \in [n]} \Delta_i$ denotes the set of \emph{mixed strategy profiles}.
            
            \item
            The utility function $\hat{u}_k: \Delta \to \R$ maps to each mixed strategy profile the \emph{expected value} of the $k$-th player's payoff under that strategy profile:
            \begin{gather}
                \hat{u}_k: 
                (\delta_1, \dots, \delta_n) 
                \mapsto
                \sum_{(s_1, \dots, s_n) \in S} \biggpars{\prod_{i=1}^{n} \delta_i(s_i)} * u_k( (s_1, \dots, s_n) ).
                \label{eq:mixedStrategyUtility}
            \end{gather}
        \end{itemize}
        The \emph{support} of a mixed strategy $\delta_k \in \Delta_k$  is the set $\supp \delta_k \coloneqq \set{s \in S_k \mid \delta_k(s) > 0}$ of pure strategies it mixes between with positive probability.
    \end{defn}

    \begin{rem}~
        \label{rem:mixedExtensionsRemark}
        \begin{enumerate}
            \item 
            We denote the mixed strategies as functions $\delta: S_k \to \Rp$, assigning to each strategy $s_k \in S_k$ its probability $\delta(s_k)$ of being played.
            An alternative point of view is to interpret $\Delta_k$ as a subset of $\R^{\abs{S_k}}$, where each mixed strategy is a probability vector.
            In this view, $\Delta_k$ is the standard $(\abs{S_k}-1)$-simplex. 
            There is no difference between the two variants except for notation, and we will switch to the variant using probability vectors wherever it is more useful.
            
            \item 
            The map $\hat{u}_k$ as defined in $\eqref{eq:mixedStrategyUtility}$ is linear in its coordinates (more formally, a restriction of a linear map on the convex set of valid probability vectors): Let $\delta = (\delta_1, \dots, \delta_n) \in S$, $k \in [n]$, and $\delta_k \in \Delta_k$ be a convex combination of the form $\delta_{k} = \alpha_1 \delta_{k,1} + \alpha_2 \delta_{k, 2}$ with strategies $\delta_{k,1}, \delta_{k, 2} \in \Delta_k$. Then
            \begin{align*}
                \hat{u}_k(\delta_k, \delta_{-k}) ~&= 
                \sum_{(s_1, \dots, s_n) \in S} \biggpars{\prod_{i=1, i \neq k}^{n} \delta_i(s_i)} * (\alpha_1 \delta_{k,1}(s_k) + \alpha_2 \delta_{k, 2}(s_k)) * u_k( (s_1, \dots, s_n) ) \\
                &= \sum_{j=1, 2} \alpha_j \pars{ \sum_{(s_1, \dots, s_n) \in S} \biggpars{\prod_{i=1, i \neq k}^{n} \delta_i(s_i)} * \delta_{k,j}(s_k) * u_k( (s_1, \dots, s_n) )} \\
                &= \alpha_1 \hat{u}_k(\delta_{k,1}, \delta_{-k}) + \alpha_2 \hat{u}_k(\delta_{k,2}, \delta_{-k}).
            \end{align*}
        
            \item 
            \label{item:mixedExtensionsRemark-matrixBimatrixGamesRemark}
            When a game is specified by a table or matrix of payoffs, the usual interpretation from now on is that the matrix represents the corresponding mixed-extension game.
            To distinguish between properties of a finite game and its mixed extension, we say that the game has a certain property \emph{in pure strategies} or \emph{in mixed strategies}:
            For example, we could say that rock-paper-scissors has no Nash equilibrium in pure strategies, but it does have one in mixed strategies.
            Keep in mind that the mixed extension game $\hat{G}$ is still a game that fits Definition \ref{def:realValuedGames}:
            Where possible, we will state results for general games without making distinctions for pure-strategy and mixed-strategy games, and use the notations $S_k$ and $u_k$ instead of $\Delta_k$ and $\hat{u}_k$ (so by writing $S_k$ or $u_k$, we do \emph{not} automatically refer only to finite games).
        \end{enumerate}
    \end{rem}

    \begin{defn}
        The mixed extension $\hat{G}$ of a finite game $G$ is called a \emph{bimatrix game}.
        If $G$ is a zero-sum game, $\hat{G}$ is called a \emph{matrix game}.
    \end{defn}

    Allowing mixed strategies is crucial for the existence of Nash equilibria, as there are many games that do not have pure Nash equilibria. There are results that show that randomly chosen games have pure Nash equilibria with decreasing probability as the game size grows, which are as summarized in the following theorem:
    \begin{thm}[{\cite{bib:goldbergProbabilityOfEquilibria}, also cf. \cites[p.15]{bib:matsumotoGameTheory}[Exercise 1.2]{bib:nisanAlgorithmicGameTheoryCh1Basic}}]~
        \label{thm:probabilityOfPureNashEquilibria}
        \begin{enumerate}
            \item
            Consider finite two-player \emph{matrix} (i.e. zero-sum) games with $m$ and $n$ pure strategies for player 1 and 2, where all the $mn$ payoffs are picked iid from the same continuous probability distribution.
            The probability that such a game has a Nash equilibrium in pure strategies is $p_{m, n} = \frac{m! n!}{(m+n-1)!}$ which approaches zero for large $m, n$.
            
            \item 
            Consider finite \emph{bimatrix} games with $m$ and $n$ pure strategies for player 1 and 2, where all the $2mn$ payoffs are picked iid from the same continuous probability distribution.
            The probability that such a game has a Nash equilibrium in pure strategies is 
            $\hat{p}_{m, n} = 1 - \sum_{k=0}^{\min(m, n)} (-1)^k k! \binom{m}{k} \binom{n}{k} \pars{\frac{1}{mn}}^k$,
            which approaches $1 - 1/e \approx 0.632$ for large $m, n$.
        \end{enumerate}
    \end{thm}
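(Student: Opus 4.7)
The plan is to use linearity of expectation for part 1 and inclusion--exclusion for part 2, exploiting that iid continuous random variables are exchangeable and pairwise distinct almost surely. Throughout I would work on the a.s.\ event that all payoffs are distinct, which reduces the probabilities of saddle/equilibrium events to combinatorial orderings.

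For the zero-sum case, the row/column criterion characterizes pure Nash equilibria as saddle points of the payoff matrix $A$. Combined with the lemma that all Nash equilibria of a zero-sum game share the same value, distinctness of entries implies that there is almost surely at most one saddle point, so $p_{m,n}$ equals the expected number of saddle points. The event that a fixed cell $(i,j)$ is a saddle point depends only on the $m+n-1$ entries in row $i$ and column $j$; by exchangeability these appear in a uniformly random order, and an elementary count shows that the probability of $a_{ij}$ being maximal in its column and minimal in its row equals $\frac{(m-1)!\,(n-1)!}{(m+n-1)!}$. Summing over the $mn$ cells yields $p_{m,n}=m!\,n!/(m+n-1)!=n/\binom{m+n-1}{m}$, which tends to $0$ by exponential growth of the binomial coefficient.

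For the bimatrix case, I would apply inclusion--exclusion on the events $E_{ij}$ that cell $(i,j)$ is a pure Nash equilibrium. Each $E_{ij}$ conjoins ``$a_{ij}$ is column-maximal'' and ``$b_{ij}$ is row-maximal''; since $A$ and $B$ are independent, these have probabilities $1/m$ and $1/n$ by exchangeability, so $\Pr(E_{ij})=1/(mn)$. The key structural observation is that two equilibria cannot share a row or column almost surely: two row-maxima of $B$ in the same row would force equality of two iid continuous values. Hence a joint intersection $E_{i_1 j_1}\cap\dots\cap E_{i_k j_k}$ has positive probability only when the cells form a partial permutation matrix, of which there are $\binom{m}{k}\binom{n}{k}k!$, automatically enforcing $k\le\min(m,n)$. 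For such a configuration the relevant column-max events involve disjoint columns of $A$ and the row-max events disjoint rows of $B$, so they are mutually independent and contribute $(1/(mn))^k$. Inclusion--exclusion then yields the stated formula for $\hat{p}_{m,n}$.

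The main obstacle is this disjointness bookkeeping: the events $E_{ij}$ are not a priori independent, and only decouple once one restricts to the partial-permutation configurations and separates the $A$-dependence from the $B$-dependence. For the asymptotics, I would observe that $k!\binom{m}{k}\binom{n}{k}/(mn)^k\to 1/k!$ pointwise and is dominated by $1/k!$ uniformly in $m,n$ (since $\binom{m}{k}\le m^k/k!$); dominated convergence on the counting measure in $k$ then gives $\hat{p}_{m,n}\to 1-\sum_{k\ge 0}(-1)^k/k!=1-1/e$.
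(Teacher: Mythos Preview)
The paper does not actually prove this theorem; it is stated with a citation to \cite{bib:goldbergProbabilityOfEquilibria} and related references, and the text proceeds directly to numerical examples. So there is no ``paper's own proof'' to compare against.

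Your argument is correct and is essentially the standard one from the cited literature. For part~1, the reduction of $p_{m,n}$ to an expected count via the uniqueness lemma, followed by the exchangeability computation on the $m+n-1$ entries of a cross, is exactly right and gives $(m-1)!(n-1)!/(m+n-1)!$ per cell. For part~2, your inclusion--exclusion is handled carefully: the key point that nonzero contributions come only from partial-permutation configurations (because two equilibria in the same row or column would force ties in $B$ or $A$), and that on such configurations the column-max events in $A$ and row-max events in $B$ decouple into independent pieces, is the crux of Goldberg--Goldman--Newman's argument. The dominated-convergence step for the asymptotics, using the uniform bound $k!\binom{m}{k}\binom{n}{k}/(mn)^k \le 1/k!$, is clean and correct.

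One very minor comment: in part~1 you might make explicit why distinct payoffs plus equal equilibrium values forces at most one saddle point (two distinct saddle points with equal $A$-value would be two distinct cells with equal entries), but this is implicit in what you wrote.
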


    To give some example numbers: In the zero-sum case, $p_{2, 2} = \frac{2}{3}, p_{3, 3}=\frac{3}{10}$, and $p_{10, 10} \approx 0.01\%$. In the bimatrix case, $\hat{p}_{2} = \frac{7}{8}, \hat{p}_{3} \approx 78.6\%, \hat{p}_{10} \approx 67.2\%$.
    The theorem shows that in the two-player case, increasingly large random zero-sum games have pure-strategy Nash equilibria with a probability converging to zero. Random non-zero-sum games have pure-strategy Nash equilibria with a surprisingly high probability, but in the limit, still over one third of those games do not have pure Nash equilibria.
    
    On the other hand, bimatrix games always have at least one mixed-strategy Nash equilibrium. This is one of the core results of non-cooperative game theory, and was famously proved by John Nash in \cite{bib:nashOnePageProofOfEquilibria}.
    
    \begin{thm}[Existence of Mixed-Strategy Nash Equilibria, e.g. {\cite[Section 1.3.1]{bib:fudenbergGameTheory}}]
        Every mixed extension of a finite game has a Nash equilibrium.
        \label{thm:existenceOfMixedStrategyEquilibria}
    \end{thm}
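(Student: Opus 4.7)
The plan is to apply Brouwer's fixed-point theorem to a continuous self-map of the mixed-strategy space, following Nash's classical approach. First, I would note that each $\Delta_k$ is the standard simplex in $\R^{\abs{S_k}}$ and hence non-empty, compact, and convex, so the same properties transfer to the product $\Delta = \bigtimes_k \Delta_k$. By Remark \ref{rem:mixedExtensionsRemark} each $\hat{u}_k$ is multilinear in the coordinates $\delta_i$ and therefore continuous; in particular, $\hat{u}_k(\delta) = \sum_{s_k \in S_k} \delta_k(s_k)\, \hat{u}_k(s_k, \delta_{-k})$, where on the right $s_k$ is identified with the Dirac mixed strategy placing weight $1$ on that vertex of $\Delta_k$.

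The key construction is, for each player $k$ and each pure strategy $s \in S_k$, the continuous \emph{gain function}
\begin{gather*}
g_{k,s}(\delta) \coloneqq \max\bigset{0,\, \hat{u}_k(s, \delta_{-k}) - \hat{u}_k(\delta)},
\end{gather*}
measuring how much player $k$ could improve by deviating to $s$. Using these, I would define $f: \Delta \to \Delta$ component-wise by
\begin{gather*}
f_k(\delta)(s) \coloneqq \frac{\delta_k(s) + g_{k,s}(\delta)}{1 + \sum_{s' \in S_k} g_{k,s'}(\delta)},
\end{gather*}
which shifts probability mass away from strategies whose gain is zero and toward profitable deviations. A routine check confirms $f_k(\delta) \in \Delta_k$ and that $f$ is continuous, since the denominator is bounded below by $1$. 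Brouwer's fixed-point theorem then supplies some $\delta^* \in \Delta$ with $f(\delta^*) = \delta^*$.

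The main step — and the one most likely to trip up an attempted proof — is verifying that any such fixed point is a Nash equilibrium. The trick is to observe that $\hat{u}_k(\delta^*)$ is a convex combination of the values $\hat{u}_k(s, \delta^*_{-k})$ over $s \in \supp \delta_k^*$, so at least one $s_0 \in \supp \delta_k^*$ must satisfy $\hat{u}_k(s_0, \delta^*_{-k}) \leq \hat{u}_k(\delta^*)$, whence $g_{k, s_0}(\delta^*) = 0$. Writing out the fixed-point equation at $s_0$ and using $\delta_k^*(s_0) > 0$ then forces $\sum_{s' \in S_k} g_{k, s'}(\delta^*) = 0$, so no pure strategy of player $k$ offers any improvement against $\delta^*_{-k}$; by linearity in $\delta_k$, neither does any mixed strategy, and since $k$ was arbitrary, $\delta^*$ is a Nash equilibrium. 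The pivot to the support strategy $s_0$ is the subtle point: without it, one cannot conclude that the denominator of the fixed-point equation reduces to $1$, and the argument stalls — this is exactly the trick that makes the short proof work.
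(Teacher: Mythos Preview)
Your proof is correct and follows Nash's 1951 argument via Brouwer's fixed-point theorem applied to the explicit gain-function map. The paper instead follows Nash's 1950 argument via Kakutani's fixed-point theorem applied to the best-response correspondence $r: \Delta \to \Pot(\Delta)$: it verifies that $\Delta$ is non-empty, compact and convex, that each $r(s)$ is non-empty, closed and convex (as the convex hull of the pure best responses, using Theorem~\ref{thm:bestResponseMixing}), and that $r$ has a closed graph (via continuity of $\hat u_k$ and closedness of $\leq$ on $\R$). Your route is arguably more elementary, since it avoids set-valued maps and the Kakutani machinery, at the cost of the slightly tricky ``pivot to $s_0$'' step you correctly flag. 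The paper's Kakutani-based route, however, is chosen for a structural reason that matters later in the thesis: Section~\ref{sec:equilibriaInTailOrderedGames} analyzes precisely \emph{which} hypothesis of Kakutani's theorem fails for lexicographically-ordered games (it is the closed-graph property of $r$, because $\leqRlex$ is not closed in $(\R^m)^2$), and this diagnosis would be less transparent from the Brouwer construction.
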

    We will give a proof of this theorem in Section \ref{sec:existenceOfMixedStrategyEquilibria}, but first introduce some more concepts relevant for the proof.
    
    \begin{ex}
        The Rock-Paper-Scissors game from Example \ref{ex:rockPaperScissors}
        has the mixed-strategy Nash equilibrium $\bigpars{(\frac{1}{3}, \frac{1}{3}, \frac{1}{3}), (\frac{1}{3}, \frac{1}{3}, \frac{1}{3})}$.
        We will later see a way to prove this, and partially go through the proof, when looking at methods to compute Nash equilibria.
    \end{ex}
    
    A different way to characterize Nash equilibria, which has interesting consequences for mixed-strategy games, is by \emph{best responses}.
    
    \begin{defn}[Best responses, see \cite{bib:fudenbergGameTheory}]
        Let $G$ be a game. For each $k \in [n]$, define
        \begin{gather*} 
            r_k: S \to \Pot(S_k),~ r_k(s) = \biggset{s_k \in S_k \bigmid u_k(s_k, s_{-k}) = \max_{\tilde{s}_k \in S_k} u_k(\tilde{s}_k, s_{-k}) }.
        \end{gather*} 
        We call a strategy $s_k \in r_k(s)$ a \emph{best response} to the strategy profile $s \in S$ (or alternatively to $s_{-k}$).
        Since $r_k(s)$ depends only on $s_{-k}$, we also write $r_k(s_{-k})$ instead of $r_k(s)$ where more convenient \cite{bib:fudenbergGameTheory}.
        \footnote{Nevertheless, $r_k$ takes arguments from $S$ as this will notationally simplify the proof of Theorem \ref{thm:existenceOfMixedStrategyEquilibria}.}
    \end{defn}
    Using this definition, a Nash equilibrium can be characterized as a strategy profile in which the strategy for each player is a best response to the profile.
    \begin{thm}
        \label{thm:nashEquilibriumCharacterizationByBestResponses}
        Let $G$ be a game. A strategy profile $(s_1, \dots, s_n) \in S$ is a Nash equilibrium if and only if 
        \begin{gather*}
            \forall k \in [n]: s_k \in r_k(s).
        \end{gather*}
    \end{thm}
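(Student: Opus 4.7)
The plan is to prove the equivalence by directly unwinding the two definitions, since the theorem is essentially a restatement of the Nash equilibrium condition in the language of best-response correspondences. Both the Nash condition (Definition \ref{def:nashEquilibriumRealValued}) and membership in $r_k(s)$ amount to the same quantified inequality, so the proof should be a short two-directional argument.

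For the direction ``$\Rightarrow$'', I would fix $k \in [n]$ and assume that $s$ is a Nash equilibrium. Then by definition, $u_k(s_k, s_{-k}) \geq u_k(\tilde{s}_k, s_{-k})$ for every $\tilde{s}_k \in S_k$, so $u_k(s_k, s_{-k})$ is an upper bound for the set $\{u_k(\tilde{s}_k, s_{-k}) : \tilde{s}_k \in S_k\}$; since this upper bound is attained (at $\tilde{s}_k = s_k$), it is the maximum, giving $s_k \in r_k(s)$.

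For the direction ``$\Leftarrow$'', I would fix $k \in [n]$ and assume $s_k \in r_k(s)$. By definition of $r_k$, this yields $u_k(s_k, s_{-k}) = \max_{\tilde{s}_k \in S_k} u_k(\tilde{s}_k, s_{-k})$, which in particular implies $u_k(s_k, s_{-k}) \geq u_k(\tilde{s}_k, s_{-k})$ for every $\tilde{s}_k \in S_k$. Since this holds for each $k$, $s$ satisfies the Nash equilibrium condition.

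There is no genuine obstacle here: the statement is a tautological reformulation of Definition \ref{def:nashEquilibriumRealValued}, and the only thing worth being careful about is that the $\max$ in the definition of $r_k$ is meaningful, which is automatic in the ``$\Rightarrow$'' direction because the value $u_k(s_k, s_{-k})$ exhibits the supremum as attained. The proof therefore need not be longer than a few lines.
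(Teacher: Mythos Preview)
Your proposal is correct and follows essentially the same approach as the paper: both unwind the definitions of Nash equilibrium and of $r_k$ directly, and the argument is indeed only a few lines. The paper phrases the ``$\Rightarrow$'' direction as a contrapositive (if $s_k \notin r_k(s)$ then some $\tilde{s}_k$ does strictly better), whereas you argue both implications directly; this is a cosmetic difference only, and your remark about the maximum being attained is a nice extra bit of care.
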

    \begin{proof}
        If $s_k \in r_k(s)$ for all players $k \in [n]$, then $\forall k \in [n], \forall \tilde{s}_k \in S_k: u_k(s_k, s_{-k}) \geq u_k(\tilde{s}_k, s_{-k})$, making $s$ a Nash equilibrium.
        Otherwise if for some $k$, $s_k \notin r_k(s)$, there exists some strategy $\tilde{s}_k$
        such that $u_k(\tilde{s}_k, s_{-k}) \geq u_k(s_k, s_{-k})$,
        so $s$ is not a Nash equilibrium.
    \end{proof}

    An important fact is that best-response mixed strategies always mix between best-response pure strategies:

    \begin{thm}[see {\cite[Theorem 2.1]{bib:nisanAlgorithmicGameTheoryCh2ComplexityNash}}]
        \label{thm:bestResponseMixing}
        Let $G$ be the mixed extension of a finite game.
        Let $s_k \in \Delta_k$ be a mixed strategy with $\supp s_k = \set{s_{k,1}, \dots, s_{k,m}}$, i.e.
        $s_k$ is a convex combination $s_k = \sum_{i=1}^m \alpha_i s_{k, i}, ~ \sum_{i=1}^m\alpha_i = 1$.
        Then for any strategy profile $s \in S$:
        \begin{gather*}
            s_k \in r_k(s) \lra \forall j \in [m]: s_{k, j} \in r_k(s).
        \end{gather*}
    \end{thm}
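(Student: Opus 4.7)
The plan is to exploit the linearity of $\hat{u}_k$ in its $k$-th coordinate, which was established in Remark~\ref{rem:mixedExtensionsRemark}. Note that by definition of $\supp s_k$, the coefficients $\alpha_i$ in the decomposition $s_k = \sum_{i=1}^m \alpha_i s_{k,i}$ are all strictly positive. Let $M \coloneqq \max_{\tilde{s}_k \in \Delta_k} \hat{u}_k(\tilde{s}_k, s_{-k})$ denote the best-response payoff for player $k$ against $s_{-k}$.

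For the direction ``$\Leftarrow$'', I would assume $s_{k,j} \in r_k(s)$ for all $j \in [m]$, so $\hat{u}_k(s_{k,j}, s_{-k}) = M$ for each $j$. Applying linearity inductively (or directly extending the two-summand statement from Remark~\ref{rem:mixedExtensionsRemark}) gives
\begin{gather*}
    \hat{u}_k(s_k, s_{-k}) = \sum_{j=1}^m \alpha_j \hat{u}_k(s_{k,j}, s_{-k}) = \sum_{j=1}^m \alpha_j M = M,
\end{gather*}
so $s_k \in r_k(s)$.

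The more interesting direction is ``$\Rightarrow$''. Assume $s_k \in r_k(s)$, so $\hat{u}_k(s_k, s_{-k}) = M$. By the same linearity argument, $\sum_{j=1}^m \alpha_j \hat{u}_k(s_{k,j}, s_{-k}) = M$. For every $j$ we have $\hat{u}_k(s_{k,j}, s_{-k}) \leq M$ since $M$ is the maximum payoff achievable against $s_{-k}$. Writing $M = \sum_j \alpha_j M$ and subtracting yields
\begin{gather*}
    \sum_{j=1}^m \alpha_j \bigpars{M - \hat{u}_k(s_{k,j}, s_{-k})} = 0,
\end{gather*}
a sum of non-negative terms with strictly positive coefficients. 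Hence each summand vanishes, giving $\hat{u}_k(s_{k,j}, s_{-k}) = M$, i.e.\ $s_{k,j} \in r_k(s)$ for every $j \in [m]$.

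The only mild subtlety — and the place where I expect a reader to want care — is ensuring $\alpha_j > 0$, which is exactly the content of $s_{k,j}$ belonging to $\supp s_k$; without this, the vanishing-sum argument in the $\Rightarrow$ direction would fail and the theorem would be false (a pure strategy outside the support can have arbitrary payoff). Everything else is routine and reduces to the linearity fact already recorded.
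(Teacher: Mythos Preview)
Your proof is correct and follows essentially the same approach as the paper: both directions rely on the linearity of $\hat{u}_k(\,\cdot\,, s_{-k})$ from Remark~\ref{rem:mixedExtensionsRemark} and the elementary fact that a strictly positive convex combination equals the maximum only when every term attains the maximum. The paper phrases the $\Rightarrow$ direction as a contradiction (if one $s_{k,j}$ has smaller payoff then some other must have larger, so $s_k$ is not a best response), while you write out the vanishing-sum argument explicitly; these are the same idea.
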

    \begin{proof}
        \sloppypar{
        Because $u_k(\dummydot, s_{-k})$ is linear (see Remark \ref{rem:mixedExtensionsRemark}), we get $u_k(s_k, s_{-k}) = \sum_{i=1}^m \alpha_i u_k(s_{k, i}, s_{-k})$.
        Assume that one $s_{k, j}$ has a smaller payoff than $s_k$: Then some other $s_{k, i}$ must have a greater payoff than $s_k$, as else $\sum_{i=1}^n \alpha_i u_k(s_{k, i}, s_{-k}) < u_k(s_k, s_{-k})$ because the sum is a weighted average.
        Because there is a strategy with greater payoff, $s_k$ is not a best response, leading to a contradiction.
        For the converse, assume that all $ s_{k, j} $ are best responses, i.e. have equal payoffs. By the linearity, $s_k$ has the same payoff, making it a best response as well.
    }
    \end{proof}

    \begin{cor}
        \label{cor:equilibriumStrategiesSupportHaveEqualPayoffs}
        If $s = (s_1, \dots, s_n) \in S$ is a Nash equilibrium, and the $k$-th player's strategy $s_k$
        mixes between pure strategies $s_{k,1}, \dots, s_{k,m} \in \Delta_k$, then the payoff of all the $s_{k,j}$ with respect to $s_{-k}$ is equal;
        Furthermore, the payoff of any strategy mixing between them is the same as well:
        \begin{gather}
            \forall j: u_k(s_{k,j}, s_{-k}) = u_k(s_k, s_{-k}), \\
            \forall \tilde{s}_k \in \Delta_k: \supp \tilde{s}_k \subseteq \supp s_k \Rightarrow u_k(\tilde{s}_k, s_{-k}) = u_k(s_k, s_{-k}).
        \end{gather}
    \end{cor}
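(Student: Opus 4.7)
The plan is to combine Theorem \ref{thm:nashEquilibriumCharacterizationByBestResponses} with Theorem \ref{thm:bestResponseMixing} and then invoke the linearity noted in Remark \ref{rem:mixedExtensionsRemark}.

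First I would apply the best-response characterization of Nash equilibria: since $s$ is a Nash equilibrium, we have $s_k \in r_k(s)$, i.e.\ $s_k$ is itself a best response against $s_{-k}$. Now $s_k$ is a convex combination of the pure strategies $s_{k,1},\dots,s_{k,m}$, so Theorem \ref{thm:bestResponseMixing} applies and yields $s_{k,j} \in r_k(s)$ for every $j \in [m]$. By the definition of $r_k$, every best response attains the same maximum value $\max_{\tilde{s}_k \in \Delta_k} u_k(\tilde{s}_k, s_{-k})$, so in particular $u_k(s_{k,j}, s_{-k}) = u_k(s_k, s_{-k})$ for all $j$, which is the first equation.

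For the second equation, let $\tilde{s}_k \in \Delta_k$ be any mixed strategy with $\supp \tilde{s}_k \subseteq \supp s_k = \set{s_{k,1},\dots,s_{k,m}}$. Then $\tilde{s}_k$ can be written as a convex combination $\tilde{s}_k = \sum_{j=1}^m \beta_j s_{k,j}$ with $\beta_j \geq 0$ and $\sum_j \beta_j = 1$. By the linearity of $u_k(\,\cdot\,, s_{-k})$ established in Remark \ref{rem:mixedExtensionsRemark}, we get
\begin{gather*}
    u_k(\tilde{s}_k, s_{-k}) = \sum_{j=1}^m \beta_j u_k(s_{k,j}, s_{-k}) = \sum_{j=1}^m \beta_j u_k(s_k, s_{-k}) = u_k(s_k, s_{-k}),
\end{gather*}
which is the claimed equality.

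There is no real obstacle here: everything is a straightforward consequence of the two preceding theorems. The only mild care needed is to notice that Theorem \ref{thm:bestResponseMixing} gives the pure-strategy equalities in a form that does not immediately mention the value $u_k(s_k, s_{-k})$; one has to observe that all best responses share the same maximum payoff by definition of $r_k$, which is what forces all the $u_k(s_{k,j}, s_{-k})$ to coincide with $u_k(s_k, s_{-k})$.
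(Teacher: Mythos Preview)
Your proof is correct and follows essentially the same route as the paper: invoke Theorem \ref{thm:nashEquilibriumCharacterizationByBestResponses} to get that $s_k$ is a best response, apply Theorem \ref{thm:bestResponseMixing} to conclude that each $s_{k,j}$ is a best response (hence all share the maximal payoff), and then use the linearity from Remark \ref{rem:mixedExtensionsRemark} for the second claim. Your write-up is slightly more explicit about the convex combination and about why best responses share the same value, but the argument is identical.
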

    \begin{proof}
        Since $s$ is a Nash equilibrium, the mixed strategy $s_k$ is a best response to $s$ by theorem \ref{thm:nashEquilibriumCharacterizationByBestResponses}.
        By theorem \ref{thm:bestResponseMixing}, all pure strategies $s_{k, j}$ are best responses as well.
        By the definition of best responses, they therefore must all have the same payoff.
        By the linearity of $u_k(\dummydot, s_{-k})$, a mixed strategy mixing between pure strategies with equal payoff has the same payoff.
    \end{proof}
    
    A possible interpretation of Theorem \ref{thm:bestResponseMixing} and Corollary \ref{cor:equilibriumStrategiesSupportHaveEqualPayoffs} is that a player, given some strategies $s_{-k}$ of other players, does not mix his own strategies in order to achieve a better payoff; the purpose of mixing strategies is rather to enable a Nash equilibrium, since only the right mixing leads to a situation where the other players have no incentive to deviate.
    We will need these results in later chapters when analyzing more general games with lexicographically-ordered outcomes, but most importantly, we will see how they can be applied to compute Nash equilibria later in this chapter.
    
    \section{Proof of Existence of a Mixed-Strategy Nash Equilibrium}
    \label{sec:existenceOfMixedStrategyEquilibria}
    The existence of a mixed-strategy Nash equilibrium in every bimatrix game (Theorem \ref{thm:existenceOfMixedStrategyEquilibria}) was first proved by John Nash in a one-page article \cite{bib:nashOnePageProofOfEquilibria}.
    His proof is based on the characterization of Nash equilibria by best responses (Theorem \ref{thm:nashEquilibriumCharacterizationByBestResponses}) and non-constructively finds an equilibrium point by Kakutani's Fixed Point Theorem, a generalization of Brouwer's Fixed Point Theorem to set-valued functions.
    
    \begin{defn}[e.g. {\cite[p.30]{bib:fudenbergGameTheory}}]
        Let $S \subseteq \R^n$. A set-valued function $\phi: S \to \Pot(S)$ \emph{has a closed graph} if for all convergent sequences $(x_n)_{n \in \N}$, $(y_n)_{n \in \N}$ in $S$,
        \begin{gather*} 
            (\forall n: y_n \in \phi(x_n)) \implies \liminfty{n} y_n \in \phi\pars{\liminfty{n} x_n}.
        \end{gather*}
    \end{defn}

    Kakutani's original paper does not use the closed graph property, but instead uses the concept of \emph{upper semi-continuity}, which is equivalent in the case we are looking at. This is because in the following theorem $S$ is compact, and $\phi: S \to \Pot(S)$ takes only closed (and therefore compact) sets as values (see \cites{bib:kakutaniFixedPointTheorem}[Proposition 11.9, (a)-(b)]{bib:borderFixedPointTheorems}).

    \begin{thm}[Kakutani's Fixed Point Theorem, see \cite{bib:kakutaniFixedPointTheorem}, {\cite[p.29f]{bib:fudenbergGameTheory}}]
        Let $S \subseteq \R^n$. Let $\phi: S \to \Pot(S)$ be a function with the following properties:
        \begin{enumerate}
            \item $S$ is non-empty, compact and convex. \label{item:KakFP-nonEmptyCompactConvexDomain}
            \item $\forall s \in S: \phi(s)$ is  non-empty, convex and closed. \label{item:KakFP-nonEmptyConvexClosedOutputs}
            \item $\phi$ has a closed graph. \label{item:KakFP-closedGraph}
        \end{enumerate}
        Then $\phi$ has a fixed point $x$, i.e. an $x \in S$ such that $x \in \phi(x)$.
    \end{thm}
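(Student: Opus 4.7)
The plan is to reduce to Brouwer's fixed-point theorem by approximating $\phi$ by single-valued continuous maps on increasingly fine triangulations, then pass to the limit using the closed graph property together with convexity of the images $\phi(s)$.

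First I would fix a sequence of simplicial triangulations $T_n$ of $S$ whose mesh tends to zero (possible since $S$ is compact and convex in $\R^n$, one can work inside a simplex containing $S$ and intersect, or assume WLOG $S$ is a simplex). For every vertex $v$ of $T_n$, choose some $f_n(v) \in \phi(v)$ (non-empty by hypothesis \ref{item:KakFP-nonEmptyConvexClosedOutputs}), and extend $f_n$ affinely across each simplex of $T_n$. Because each $f_n(v) \in S$ and $S$ is convex (hypothesis \ref{item:KakFP-nonEmptyCompactConvexDomain}), these affine extensions take values in $S$, and because they agree on shared faces, they glue to a continuous map $f_n : S \to S$. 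Brouwer's fixed-point theorem then yields $x_n \in S$ with $f_n(x_n) = x_n$.

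Next, by compactness of $S$ I would pass to a subsequence (still indexed by $n$) so that $x_n \to x^* \in S$. For each $n$, $x_n$ lies in some simplex of $T_n$ with vertices $v_0^{(n)}, \dots, v_d^{(n)}$ and barycentric coordinates $\lambda_0^{(n)}, \dots, \lambda_d^{(n)}$; since the mesh of $T_n$ tends to zero, each $v_i^{(n)} \to x^*$. By compactness of $S$ and of the simplex $\{(\lambda_0, \dots, \lambda_d) \in \Rp^{d+1} \mid \sum \lambda_i = 1\}$, a further diagonal subsequence extraction gives convergent $f_n(v_i^{(n)}) \to y_i^* \in S$ and $\lambda_i^{(n)} \to \lambda_i^* \geq 0$ with $\sum \lambda_i^* = 1$. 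The closed-graph hypothesis \ref{item:KakFP-closedGraph} applied to the sequences $v_i^{(n)} \to x^*$ and $f_n(v_i^{(n)}) \to y_i^*$ (using $f_n(v_i^{(n)}) \in \phi(v_i^{(n)})$) gives $y_i^* \in \phi(x^*)$ for each $i$.

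Finally I would identify the limit: since $f_n$ is affine on the simplex containing $x_n$,
\begin{gather*}
    x_n = f_n(x_n) = \sum_{i=0}^{d} \lambda_i^{(n)} f_n(v_i^{(n)}) \xrightarrow{n \to \infty} \sum_{i=0}^{d} \lambda_i^* y_i^*.
\end{gather*}
So $x^* = \sum_i \lambda_i^* y_i^*$ is a convex combination of points in $\phi(x^*)$. Convexity of $\phi(x^*)$ (hypothesis \ref{item:KakFP-nonEmptyConvexClosedOutputs}) gives $x^* \in \phi(x^*)$, which is the desired fixed point. The main obstacle is bookkeeping the nested subsequence extractions (for $x_n$, for each of the $d+1$ vertex-images, and for the barycentric coordinates) so that the closed-graph property applies simultaneously to each of the $d+1$ vertices; convexity of $\phi(x^*)$ is what makes the whole argument close, since Brouwer only hands us a fixed point of an affine interpolant, not of $\phi$ itself.
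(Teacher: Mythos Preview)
The paper does not prove this theorem at all: immediately after stating it, the text says ``We are not giving a proof for Kakutani's Theorem, but equipped with it, we can prove Theorem~\ref{thm:existenceOfMixedStrategyEquilibria}\ldots''. So there is nothing to compare against on the paper's side; the theorem is simply quoted as a black box.

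Your sketch is the standard reduction to Brouwer via simplicial approximation, essentially Kakutani's original argument, and the main line (choose vertex values in $\phi$, interpolate affinely, apply Brouwer, extract limits, use closed graph for each vertex sequence, then convexity of $\phi(x^*)$ to close) is correct. The one place that is genuinely hand-waved is the triangulation step: an arbitrary compact convex $S \subseteq \R^n$ need not admit a simplicial triangulation with all vertices in $S$ and mesh going to zero in the naive sense. The usual fix is to prove the result first for a simplex (where barycentric subdivision gives the triangulations), and then deduce the general case by noting that any non-empty compact convex $S$ is a retract of a simplex $\Sigma \supseteq S$ via a continuous retraction $r: \Sigma \to S$, so one applies the simplex case to $s \mapsto \phi(r(s))$. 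Your parenthetical ``assume WLOG $S$ is a simplex'' is exactly this step, but it deserves a sentence of justification rather than being asserted.
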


    We are not giving a proof for Kakutani's Theorem, but equipped with it, we can prove Theorem \ref{thm:existenceOfMixedStrategyEquilibria} that every bimatrix game has a Nash equilibrium in mixed strategies.
    
    \begin{proof}[Proof of Theorem \ref{thm:existenceOfMixedStrategyEquilibria}, cf. {\cite[p.29]{bib:fudenbergGameTheory}}]
        In the context of this proof, let each player $k \in [n]$ have $m_k$ different pure strategies and set $m \coloneqq m_1 + \dots + m_n$.
        We represent mixed strategy profiles as points in $\R^m$: Let $\Delta_k \coloneqq \set{(p_1, \dots, p_{m_k}) \in \R^{m_k} \mid \sum_{i=1}^{m_k} p_i = 1}$ (the standard $(m_k - 1)$-simplex) and let $\Delta \coloneqq \bigtimes_{k=1}^n \Delta_k \subseteq \R^m$. In other words, here we do not view mixed strategies as functions $\delta: S_k \to \Rp$, but instead interpret them as real vectors.
        
        \sloppypar{Next we define $r: \Delta \to \Pot(\Delta)$ as the \emph{best-response correspondence}. 
        Recall that ${r_k: \Delta \to \Delta_k}$ maps mixed-strategy profiles to the set of \emph{best-response mixed strategies} for the $k$-th player. Now $r$ incorporates this information for all players at once, mapping mixed-strategy profiles to the set of \emph{best-response mixed-strategy profiles}:}
        \begin{gather}
            r: \Delta \to \Pot(\Delta), (\delta_1, \dots, \delta_n) \mapsto r_1(\delta_1, \dots, \delta_n) \times \dots \times r_n(\delta_1, \dots, \delta_n).
            \label{eq:bestResponseCorresponenceDefinition}
        \end{gather}
        By Theorem \ref{thm:nashEquilibriumCharacterizationByBestResponses}, mixed-strategy Nash equilibria are exactly the fixed points of $r$, i.e. strategy profiles that are best responses to themselves: So if we show that the conditions of Kakutani's Fixed Point Theorem are satisfied, this shows that $r$ has a fixed point, and we have proved that mixed-strategy Nash equilibria always exist.
        
        On \ref{item:KakFP-nonEmptyCompactConvexDomain} (conditions on $\Delta$): The $\Delta_k$ are clearly non-empty, compact and convex as simplices. Therefore $\Delta$ is also non-empty, as well as
        compact and convex as the finite Cartesian product of compact and convex sets.
        
        On \ref{item:KakFP-nonEmptyConvexClosedOutputs} (the set $r(s)$ of best-response profiles is non-empty, convex and closed for all $s \in \Delta$): It suffices to show that these conditions hold for each $r_k(s)$, since $r(s)$ is the finite product of those. Let $k \in [n]$. By Theorem \ref{thm:bestResponseMixing}, best-response mixed strategies are exactly the convex combinations of best-response pure strategies. Therefore $r_k(s)$ is the convex hull of the $k$-th player's pure best response strategies to $s_{-k}$. As a convex hull of a finite set, $r_k(s)$ is convex and closed.
        Furthermore, $r_k(s)$ is non-empty: Mixed strategies cannot have larger payoffs than the best pure strategy in their support. Since there are only finitely many pure strategies in the support, at least one of them maximizes $u(\dummydot, s_{-k})$.
        
        On \ref{item:KakFP-closedGraph} ($r$ has a closed graph): We need to show that if a sequence of mixed-strategy profiles converges, and a corresponding sequence of best-response profiles also converges, then the limit of the best responses is a best response to the limit of the strategy profiles. Let $\pars{s^{(i)}}_{i \in \N}$ be a convergent sequence of strategy profiles, and $\pars{t^{(i)}}_{i \in \N}$ a convergent sequence of best responses to the $s^{(i)}$:
        \begin{align*}
            &s^{(i)} = \pars{s_1^{(i)}, \dots, s_n^{(i)}} \toinfty{i} (s_1, \dots, s_n) =: s, \\
            &t^{(i)} = \pars{t_1^{(i)}, \dots, t_n^{(i)}} \toinfty{i} (t_1, \dots, t_n) =: t, \quad
            \forall i: t^{(i)} \in r\pars{s^{(i)}}.
        \end{align*}
        If we show that $t_k \in r_k(s)$ for all players $k$, we get $t \in r(s)$ by \eqref{eq:bestResponseCorresponenceDefinition}.
        Let 
        $\tilde{t}_k \in \Delta_k$ be some arbitrary response. We show that $\tilde{t}_k$ is not a better response than $t_k$:
        First, for any $i \in \N$, $t^{(i)}_k$ is a best response to $s^{(i)}_{-k}$, so
        \begin{gather*}
            u_k\pars{t^{(i)}_k, s^{(i)}_{-k}} \geq u_k\pars{\tilde{t}_k, s^{(i)}_{-k}}.
        \end{gather*}
        The payoff function $u_k$ is continuous, since it is a restriction of a linear function between finite-dimensional spaces. Therefore in the limit as $i \to \infty$, the left side converges to $u_k(t_k, s_{-k})$ while the right side converges to $u_k(\tilde{t}_k, s_{-k})$.
        Recall that the ordering $\geq$ on the reals is preserved under limits (in other words, it is closed as a subset of $\R \times \R$).
        \footnote{A consequence of the “sandwich theorem”. We emphasize this here since this continuity property does not hold for general orderings on topological spaces:
        in particular, it will be important later that it does \emph{not hold} for the lexicographic ordering on $\R^n$.}
        Therefore,
        \begin{gather*}
            u_k(t_k, s_{-k}) \geq u_k(\tilde{t}_k, s_{-k}).
        \end{gather*}
        So $t_k$ maximizes the payoff over all responses to $s_{-k}$, therefore $t_k \in r(s_{-k})$. This shows that $r$ has a closed graph.
        Thus $r$ satisfies the conditions of Kakutani's theorem and therefore has a fixed point, which is a Nash equilibrium for the game by Theorem \ref{thm:nashEquilibriumCharacterizationByBestResponses}.
    \end{proof}

    \section{Computation of Nash Equilibria}
    It is important to have a way to compute Nash equilibria, especially for practical purposes, but also for theoretical justification of Nash equilibria as a prediction of rational behavior: As \cite[p.30]{bib:nisanAlgorithmicGameTheoryCh2ComplexityNash} cites Kamal Jain, “If your laptop cannot
find it, neither can the market.” There are various exact and numerical algorithms to compute Nash equilibria, and for the two-player case we will look at the simple exact method called the \emph{support enumeration algorithm} which can be executed by hand, and also the numerical \emph{fictitious play algorithm} which approximates a Nash equilibrium by simulating several rounds of play and refining the strategies over time based on the past actions.
    
    \subsection{Exact Computation}
    \label{subsec:exactComputationNashEquilibriaSupportEnumerationAlgorithm}
    Mixed Nash equilibria in bimatrix games can be computed exactly by a simple method called the \emph{support enumeration algorithm}.
    Our presentation follows \cite{bib:nisanAlgorithmicGameTheoryCh3EquilibriumComputation}.
    We assume the game's payoffs are specified by matrices $A, B \in \R^{n\times m}$, and the players have pure strategies $S_1 = \set{s_1,\dots,s_n}$, $S_2 = \set{t_1,\dots,t_m}$, respectively.
    Observe that \eqref{eq:mixedStrategyUtility} simplifies in the following way: If $x = (x_1,\dots,x_n) \in \Rp^n$ represents a mixed strategy of player 1 and $y = (y_1,\dots,y_m) \in \Rp^m$ one of player 2, then the players' payoffs are given by $u_1(x,y) = x\transposed A y$ and $u_2(x,y) = x\transposed B y$.
    
    If $(x, y)$ is a Nash equilibrium, then the first player's pure strategies in $\supp(x)$ must all be best responses to $y$, i.e. they all have equal payoff, and no other pure strategy can have greater payoff under $y$. This is a consequence of Theorem \ref{thm:bestResponseMixing}, and in \cite[p.55]{bib:nisanAlgorithmicGameTheoryCh3EquilibriumComputation} is stated in the following form:
    \begin{gather}
        \forall i: x_i > 0 \Rightarrow (Ay)_i = \max_{\tilde{i}} (Ay)_{\tilde{i}}.
        \label{eq:equalMaxPureStrategyPayoffsLinearSystem}
    \end{gather}
    The same holds with the player roles reversed.
    
    Suppose we want to find a Nash equilibrium where player one mixes between rows with indices in $I \subseteq [n]$, and player 2 mixes between columns with indices in $J \subseteq [m]$. Then player 1 must mix in a way that makes player 2 indifferent between the columns in $J$, and player 2 must mix in a way that makes player 1 indifferent between the rows in $I$.
    This is expressed in two linear systems of equations, where $i_1 \coloneqq \min(I), j_1 \coloneqq \min(J)$ are the smallest indices in $I$ and $J$, respectively:
    \begin{gather}
        (Ay)_{i_k} = (Ay)_{i_1} ~~~(\forall i_k \in I \setminus\set{i_1}), \qquad\;\, \sum_{j \in J} y_j = 1 \label{eq:linearSystemNecessaryForNashEquilibrium} \\
        (xB)_{j_k} = (xB)_{j_1} ~~~(\forall j_k \in J \setminus\set{j_1}), \qquad \sum_{i \in I} x_i = 1
    \end{gather}
    If some strategy profile $(x, y)$ solves this system of equations, it is a candidate for a mixed-strategy Nash equilibrium.
    It remains to check that all probabilities in the solution are non-negative. Also we have to make sure the mixed strategies actually have maximal payoffs as demanded by \eqref{eq:equalMaxPureStrategyPayoffsLinearSystem}: For each $\tilde{i}\in I^\complement$ we have to check that $(Ay)_{\tilde{i}} \leq (Ay)_{i_1}$, and analogously for $J$.    
    
    The algorithm we discuss needs one additional assumption, that the games involved are \emph{non-degenerate}:
    \begin{defn}[{\cite[Definition 3.2]{bib:nisanAlgorithmicGameTheoryCh3EquilibriumComputation}}]
        A mixed extension $G$ of a two-player finite game is \emph{degenerate} if one player has a mixed strategy of support size $m$ that has more than $m$ pure best responses by the other player. Otherwise, it is \emph{non-degenerate}.
        \label{def:degenerateRealValuedGame}
    \end{defn}
    In \cite[p.54]{bib:nisanAlgorithmicGameTheoryCh3EquilibriumComputation}, it is noted that “almost all” (two-player mixed-extension) games are non-degenerate.
    We can now put together the \emph{support enumeration algorithm} \cite[Algorithm 3.4]{bib:nisanAlgorithmicGameTheoryCh3EquilibriumComputation}: We iterate over all possible pairs of support indices $(I, J)$ with $\abs{I} = \abs{J}$, for each one go through the steps outlined above, and output all solutions of the linear system that satisfy the two additional properties (no negative probabilities and only best-response pure strategies in the support). 
    The assumption that the game is non-degenerate assures that the linear systems that occur in the algorithm do not have more than one solution: If we included degenerate games, we could not simply output all solutions, since there could be infinitely many.
    There are ways to deal with degenerate games as well which we will not go into here (see \cite[p.65]{bib:nisanAlgorithmicGameTheoryCh3EquilibriumComputation}, where an algorithm is discussed in detail).
    
    \begin{ex}[Computation of Rock-Paper-Scissors Equilibrium]
        We can compute the rock-paper-scissors Nash equilibrium from Example \ref{ex:rockPaperScissors} using the support enumeration algorithm.
        We will not go through the whole computation, but only look at the cases $(I, J) = (\set{1, 2}, \set{1, 2})$, $(I, J) = (\set{2, 3}, \set{1, 2})$ (which are not Nash equilibria) and $(I, J) = (\set{1, 2, 3}, \set{1, 2, 3})$ (which is a Nash equilibrium). Remember that the payoff matrices are given by $\pm \smallmat{0 & -1 & 1 \\ 1 & 0 & -1 \\ -1 & 1 & 0}$. We denote by $p_1, p_2, p_3$ the probabilities the strategy of player 1 assigns to the rows, and by $q_1, q_2, q_3$ the probabilities the strategy of player 2 assigns to the columns.
        
        For the first pair of indices, $p_1, p_2$ should be chosen in a way that makes player 2 indifferent between the first two columns. The corresponding equations are $0p_1 - 1p_2 = 1p_1 + 0p_2$ and $p_1 + p_2 = 1$. There is no solution, so this pair does not lead to a Nash equilibrium.
        
        For the second pair, the corresponding equations are $-p_2 + p_3 = -p_3, p_2 + p_3 = 1$ with the solution $p_2 = \frac{2}{3}, p_3 = \frac{1}{3}$. The payoff player 2 has for each of the first two columns in this case is $-\frac{1}{3}$. However the payoff player 2 has for the third column is $\frac{2}{3}$, so the first two columns are not best responses: This combination does also not lead to a Nash equilibrium.
        
        Finally for the third pair of indices, the equations are $-p_2 + p_3 = p_1 - p_3$, $-p_2 + p_3 = -p_1 + p_3$, and $p_1 + p_2 + p_3 = 1$, with the solution $p_1 = p_2 = p_3 = \frac{1}{3}$. Similarly the equations for player 2 to make player 1 indifferent between all rows are $-q_2 + q_3 = q_1 - q_3$, $-q_2 + q_3 = -q_1 + q_2$, and $q_1 + q_2 + q_3 = 1$, again with the solution $q_1 = q_2 = q_3 = \frac{1}{3}$. Since all solutions are positive, and no other rows/columns could be better responses, this shows that $\pars{(\frac{1}{3}, \frac{1}{3}, \frac{1}{3}), (\frac{1}{3}, \frac{1}{3}, \frac{1}{3})}$ is a Nash equilibrium of the Rock-Paper-Scissors game.
    \end{ex}
    
    \subsubsection{Complexity Considerations}
    The support enumeration algorithm is obviously quite inefficient, since it enumerates all subsets of the pure strategy sets for both players, and therefore is exponential in the number of strategies.
    \cite{bib:nisanAlgorithmicGameTheoryCh3EquilibriumComputation} investigates more sophisticated methods: The \emph{vertex enumeration algorithm} finds possible supports of mixed strategies by iterating over the vertices of certain polyhedra (“best-response polytypes”) and outputs all Nash equilibria.
    The \emph{Lemke-Howson algorithm} finds one Nash equilibrium by traversing a path in those polytypes.
    However, these improvements still have exponential worst-case complexities, and complexity-theoretic results suggest that finding Nash equilibria in general, even in two-player games, is an intractable problem. As discussed in detail in \cite{bib:nisanAlgorithmicGameTheoryCh2ComplexityNash}, the problem \textsc{Nash} of finding a Nash equilibrium is complete for the complexity class PPAD, which contains other problems for which an efficient algorithm is thought unlikely to exist, like finding fixed points in the context of Brouwer's fixed point theorem.
    Finding Nash equilibria does not fit into the more common intractability notion of NP completeness, because Nash equilibria are guaranteed to exist -- the problem can therefore not be stated suitably as a decision problem. However, there are a number of closely related variations where existence is not guaranteed and which are known to be NP-complete:
    For example, deciding whether a game has more than one Nash equilibrium, whether a Nash equilibrium with at least a given utility exists, and whether a Nash equilibrium exists with a given strategy in (or not in) its support, are all NP-complete problems (\cite{bib:gilboaEquilibriaComplexityConsiderations}, cited in \cite{bib:nisanAlgorithmicGameTheoryCh2ComplexityNash}).

    \subsection{Approximate Computation: Fictitious Play}
    A different approach for finding Nash equilibria is to simulate repeated play of the game by players that are learning from past outcomes.
    We follow the presentation of \cite{bib:daskalakisFictitiousPlayLectureNotes}.
    We again assume a bimatrix game with payoff matrices $A, B$.
    In every new round, each player averages over the past strategies of their opponent. Under the assumption that the opponent will play this average mixed strategy, the players picks their own pure strategy for this round that maximizes their payoff. Formally, denote by $s^{(k)} \in S_1, t^{(k)} \in S_2$ the strategies played in the $k$-th round, and $x^{(k)}, y^{(k)}$ the corresponding average strategies, defined iteratively by:
    \begin{gather*}
        x^{(k)} = \frac{1}{k} \sum_{i=1}^{k} s^{(i)}, \quad y^{(k)} = \frac{1}{k} \sum_{i=1}^{k} t^{(i)}, \\
        s^{(1)} \in S_1,\; t^{(1)} \in S_2 \text{ arbitrary}, \\
        s^{(k+1)} \in \mathop{\argmax}\limits_{\tilde{s} \in S_1} (u_1(\tilde{s}, y^{(k)})), \quad
        t^{(k+1)} \in \mathop{\argmax}\limits_{\tilde{t} \in S_2} (u_2(x^{(k)}, \tilde{t})).
    \end{gather*}
    Where there are multiple strategies that could be pick, we arbitrarily define the algorithm to always prefer the one with the lowest index.
    
    In the special case of zero-sum games, this process was shown to converge by \cite{bib:robinsonFictitiousPlay}; we will state the result without proof.
    In the non-zero-sum case, however, there are examples of bimatrix games for which fictitious play does not converge.
    
    \begin{thm}[Convergence of Fictitious Play, \cite{bib:robinsonFictitiousPlay}]
        \label{thm:fictitiousPlayConverges}
        If the game is a zero-sum game, i.e. $A=-B$, then 
        \begin{enumerate}
            \item The sequence $(u_1(x_k, y_k))_{k \in \N}$ converges towards the value of the game.
            \item The sequence $((x_k, y_k))_{k \in \N}$ converges towards a Nash equilibrium $(x, y)$.
        \end{enumerate}
    \end{thm}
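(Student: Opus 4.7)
The plan is to follow Robinson's original inductive strategy. Let $v$ denote the value of the game, and define the running maximin quantities $\bar{v}_k := \max_{\tilde{s} \in \Delta_1} \tilde{s}\transposed A y^{(k)}$ and $\underline{v}_k := \min_{\tilde{t} \in \Delta_2} x^{(k)\transposed} A \tilde{t}$. By the minimax theorem, for every $k$ one has $\underline{v}_k \leq v \leq \bar{v}_k$, and trivially $\underline{v}_k \leq u_1(x^{(k)}, y^{(k)}) \leq \bar{v}_k$ since $x^{(k)}$ and $y^{(k)}$ lie in the respective simplices. The whole theorem therefore reduces to the single assertion $\bar{v}_k - \underline{v}_k \to 0$: part (1) follows by squeezing, and part (2) follows because any accumulation point $(x, y)$ of the bounded sequence $(x^{(k)}, y^{(k)})$ in the compact simplex product satisfies $\max_{\tilde{s}} \tilde{s}\transposed A y = \min_{\tilde{t}} x\transposed A \tilde{t} = v$ by continuity, and this equality characterizes Nash equilibria in zero-sum games.

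To prove $\bar{v}_k - \underline{v}_k \to 0$, I would argue by induction on $n+m$, where $A \in \R^{n \times m}$. The base case $n=m=1$ is trivial. For the inductive step, fix $\varepsilon > 0$ and try to show $\bar{v}_k - \underline{v}_k < \varepsilon$ for all sufficiently large $k$. The key observation is that, for any time window $[k_0, k_0 + T]$, the cumulative average $x^{(k)}$ on that window decomposes as a convex combination of the old average $x^{(k_0)}$ with weight $k_0/k$ and the freshly accumulated plays with weight $(k - k_0)/k$; the latter is itself a fictitious-play sequence against a perturbed initial condition. Any pure strategy that is a best response only finitely often has its weight in $x^{(k)}$ driven to zero as $k \to \infty$, so up to an error vanishing in $k$ the process eventually lives on fictitious play over a strictly smaller submatrix of $A$, to which the inductive hypothesis applies and yields $\bar{v}_k - \underline{v}_k \to 0$ on the restricted subgame.

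The hard part is the quantitative bookkeeping that turns this reduction into an actual proof: one must uniformly bound the number of consecutive rounds during which $\bar{v}_k - \underline{v}_k$ can remain $\geq \varepsilon$, and verify that the $1/k$ damping from averaging dominates the accumulated error from the initial segment. Robinson's original argument achieves this by a nested family of lemmas, each introducing its own tolerance $\varepsilon_j$ and window length $T_j$ matched to the dimension of the current submatrix, and then extracting the convergence by sending tolerances to zero along the induction. I would expect this delicate interplay between the dimensional reduction to submatrices and the averaging-based damping to be the real technical obstacle; by contrast, the payoff squeeze and the passage from $\bar{v}_k - \underline{v}_k \to 0$ to the existence of a limit Nash equilibrium are essentially immediate from continuity and compactness.
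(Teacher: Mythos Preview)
The paper does not give a proof of this theorem; it explicitly states the result ``without proof'' and cites Robinson's original article. There is therefore nothing in the paper to compare your proposal against.

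On its own merits, your sketch follows Robinson's inductive strategy correctly in outline: the reduction to $\bar{v}_k - \underline{v}_k \to 0$ via the minimax sandwich is standard, and the induction on $n+m$ with the submatrix reduction is precisely Robinson's mechanism. One point worth flagging: from $\bar{v}_k - \underline{v}_k \to 0$ you deduce only that every accumulation point of $(x^{(k)}, y^{(k)})$ is a Nash equilibrium, not that the sequence itself converges. That is in fact all Robinson proves; full convergence of the empirical strategies requires additional hypotheses (e.g.\ uniqueness of the equilibrium). So the gap here lies in the theorem's statement as reproduced in the paper, not in your argument, and your accumulation-point formulation is the honest one.
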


    Note that it is quite possible that for no value of $k$, $(x_k, y_k)$ actually forms a Nash equilibrium.
    To deal with situations like this, there is the notion of $\epsilon$-approximate Nash equilibria, where deviation from the equilibrium gains players at most $\epsilon$ additional payoff.
        
    \begin{defn}
        Let $\epsilon > 0$.
        A strategy profile $s = (s_1, \dots, s_n) \in S$ is an \emph{$\epsilon$-approximate Nash equilibrium} if
        \begin{gather*} 
            \forall k: \forall \tilde{s_k} \in S_k: u_k(s_k, s_{-k}) \geq u_k(\tilde{s_k}, s_{-k}) - \epsilon.
        \end{gather*} 
    \end{defn}

    The second result of \ref{thm:fictitiousPlayConverges} can now be stated as follows:
    \begin{cor}
        If the game is zero-sum, for any $\epsilon > 0$, there is some $K \in \N$ such that for all $k \geq K$, the result after $k$ rounds of fictitious play $(x_k, y_k)$ is an $\epsilon$-Nash equilibrium.
    \end{cor}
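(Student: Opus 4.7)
The plan is to unpack what “$\epsilon$-Nash equilibrium’’ means in the zero-sum setting and then reduce the claim to a standard continuity argument, using both parts of Theorem \ref{thm:fictitiousPlayConverges} together with compactness of the mixed-strategy simplices.

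First I would rewrite the $\epsilon$-NE condition for a bimatrix game with payoff matrices $A,B=-A$. A profile $(x_k,y_k)$ is an $\epsilon$-NE iff
\begin{gather*}
    \max_{\tilde{x} \in \Delta_1} \tilde{x}\transposed A y_k - x_k\transposed A y_k \leq \epsilon
    \quad\text{and}\quad
    x_k\transposed A y_k - \min_{\tilde{y} \in \Delta_2} x_k\transposed A \tilde{y} \leq \epsilon,
\end{gather*}
where the second inequality comes from translating the condition for player 2 via $B=-A$. So it suffices to show that both the maximum (over player 1's deviations) and the minimum (over player 2's deviations) approach $u_1(x_k,y_k)$ in the limit.

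Next I would invoke the two statements of Theorem \ref{thm:fictitiousPlayConverges}: part (ii) gives $(x_k,y_k)\toinfty{k}(x^*,y^*)$ for some Nash equilibrium $(x^*,y^*)$, and part (i) gives $u_1(x_k,y_k)\toinfty{k}v$, where $v$ is the value. Because $\Delta_1,\Delta_2$ are compact and the maps $(\tilde{x},y)\mapsto \tilde{x}\transposed A y$ and $(x,\tilde{y})\mapsto x\transposed A \tilde{y}$ are continuous, the envelope functions
\begin{gather*}
    y\mapsto \max_{\tilde{x}\in\Delta_1} \tilde{x}\transposed A y, \qquad x\mapsto \min_{\tilde{y}\in\Delta_2} x\transposed A \tilde{y}
\end{gather*}
are themselves continuous (standard maximum theorem for continuous functions on a compact domain). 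Hence
\begin{gather*}
    \max_{\tilde{x}} \tilde{x}\transposed A y_k \toinfty{k} \max_{\tilde{x}} \tilde{x}\transposed A y^* = v, \qquad
    \min_{\tilde{y}} x_k\transposed A \tilde{y} \toinfty{k} \min_{\tilde{y}} (x^*)\transposed A \tilde{y} = v,
\end{gather*}
where the equalities to $v$ follow from $(x^*,y^*)$ being a Nash equilibrium of a zero-sum game (player 1's best response payoff against $y^*$ equals $v$, and player 2 pushes the payoff down to $v$ against $x^*$).

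Finally, given $\epsilon > 0$, I would pick $K$ large enough that for $k\geq K$ all three quantities $\max_{\tilde{x}}\tilde{x}\transposed A y_k$, $u_1(x_k,y_k)$, and $\min_{\tilde{y}} x_k\transposed A \tilde{y}$ lie within $\epsilon/2$ of $v$. The triangle inequality then yields both $\epsilon$-NE conditions simultaneously. The whole argument is essentially a continuity-plus-compactness exercise; I do not expect a real obstacle beyond carefully justifying the continuity of the envelope functions, which is the only step that is not entirely routine algebra.
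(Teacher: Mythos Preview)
Your argument is correct. The paper itself does not supply a proof: it introduces the corollary with ``The second result of Theorem \ref{thm:fictitiousPlayConverges} can now be stated as follows'' and treats it as an immediate restatement of the convergence $(x_k,y_k)\to(x^*,y^*)$. Your write-up simply makes explicit the continuity step the paper leaves tacit, namely that the best-response value functions $y\mapsto\max_{\tilde{x}}\tilde{x}^\top A y$ and $x\mapsto\min_{\tilde{y}}x^\top A\tilde{y}$ are continuous, so convergence to an equilibrium forces the deviation gains to vanish. One small simplification: you do not actually need part (i) separately, since $u_1(x_k,y_k)\to u_1(x^*,y^*)=v$ already follows from continuity of $u_1$ and part (ii).
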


    \chapter{Games with Distributional Payoffs}
    \label{chap:gamesWithDistributionalPayoffs}
    The previous chapter established the basic concepts of standard game theory, where payoffs are real numbers.
    The goal of this chapter is to introduce and analyze a theory of games that instead have probability distributions as payoffs.
    To specify the players' preferences for outcomes in this setting, we use stochastic orders which compare probability distributions.
    The model of distribution-valued games was first introduced by Stefan Rass
    in the context of IT security (\cite{bib:rassGameRiskManagI,bib:rassGameRiskManagII,bib:rassGameRiskManagIII}), and in this model probabilistic outcomes are rated based on a stochastic order we call the \emph{tail order} (\cite{bib:rassTotalOrderingOnLossDistributions}).
    We start with a generalization of this model and first introduce games with an arbitrary payoff set, where preferences are expressed by preorders on this set, and define distribution-valued games as a special case of these.
    We then focus on the tail order, discuss its properties as an ordering, and analyze the existence of Nash equilibria in distribution-valued games with tail order preferences.
    
    \section{Normal-Form Games with Generalized Payoffs}
    \label{sec:generalizedPayoffGames}    
    As a general framework, we define games where the payoffs lie in an arbitrary set. The players' preferences between payoffs are expressed by preorders on the payoff set.
    \begin{defn}[Preorder]
        A \emph{preorder} $\leq$ on some set $A$ is a reflexive and transitive binary relation on $A$:
        \begin{align*}
            &\forall a \in A: a \leq a. \tag{Reflexivity} \\
            &\forall a, b, c \in A: a \leq b, b \leq c \implies a \leq c. \tag{Transitivity}
        \end{align*}
        Additional properties of orders that a preorder {does \emph{not} need to satisfy} are:
        \begin{align*}
            &\forall a, b \in A: a \leq b, a \geq b \implies a = b. \tag{Antisymmetry} \\
            &\forall a, b \in A: a \leq b \vee a \geq b. \tag{Totality}
        \end{align*}
    \end{defn}
    We define $a \geq b \coloneqq b \leq a$ and $a < b \colonlra a \leq b \wedge \neg( a \geq b)$. If both $a \leq b$ and $a \geq b$, we say that $\leq$ is \emph{indifferent} between the two elements.
    Antisymmetric preorders are never indifferent between different elements. Total preorders order any pair of elements, so no two elements are incomparable. Since neither property is required, preorders in general may exhibit indifference as well as incomparability.
    
    \let\popref\geq
    \let\pononpref\leq
    \let\postrpref>
    \let\postrnonpref<    

    \begin{defn}[Game with Generalized Payoffs]
        A \emph{game with generalized payoffs in $A$}, $G = (n, A, (S_1, \dots, S_n), (u_1, \dots, u_n))$ with $n$ players consists of a payoff set $A$, strategy sets $S_k$ and a payoff function $u_k: S \to A$ for each player $k \in [n]$, where $S \coloneqq \bigtimes_{k \in [n]} S_k$.        
        Such a game can be \emph{equipped with preorders} $(\leq_1, \dots, \leq_n)$, written $G_{(\leq_1, \dots, \leq_n)}$, where each $\leq_k$ is a preorder on $A$ and represents the preferences of player $k \in [n]$ for the payoffs in $A$.
        If all players have the same preference preorder $\leq$, we write $G_\leq$.
        $G$ is \emph{finite} if $S$ is a finite set.
    \end{defn}
    We adopt the convention that the greater payoff with respect to a preorder is preferred, i.e. if $x \leq y$, then $y$ is preferred. However as in real-valued games, it will sometimes be more convenient to talk about costs instead of utility (cf. Definition \ref{def:realValuedGames}). We cannot simply define $c_k = -u_k$ in the general setting of a preordered set $A$, however we can effectively treat the payoffs as costs by choosing the preference preorders accordingly.
    
    \begin{defn}[Nash Equilibrium of Game with Generalized Payoffs]
        Let $G$ be a game with generalized payoffs that is equipped with preorders $(\leq_1, \dots, \leq_n)$.
        A \emph{Nash equilibrium of} $G_{(\leq_1, \dots, \leq_n)}$ (or a \emph{Nash equilibrium of $G$ with respect to $(\leq_1, \dots, \leq_n)$})
        is a strategy profile $s = (s_1, \dots, s_n) \in S$ such that for each player $k \in [n]$:
        \begin{gather}
            \forall \tilde{s}_k \in S_k: u_k(s_k, s_{-k}) \popref_k u_k(\tilde{s}_k, s_{-k}).
            \label{eq:nashEquilibriumGeneralizedPayoffs}
        \end{gather}
    \end{defn}
    
    Similar as in the real-valued theory, we are especially interested in two-player zero-sum games.
    But we are in some trouble defining what zero-sum is supposed to mean: Obviously, in our general setting we have no notion of two payoffs $a_1, a_2 \in A$ summing to zero.
    We can base our definition on another crucial property of zero-sum games: Zero-sum games are \emph{antagonistic} -- when one player wins, the other loses (see \cite{bib:andersonAntagonisticGames}). This property \emph{can} be generalized to our model.
    
    \begin{defn}[Antagonistic and Zero-Sum Games]
        A two-player game with generalized payoffs $G_{(\leq_1, \leq_2)}$ is \emph{antagonistic} if
        \begin{gather*}
            \forall s, t \in S:~ u_1(s) \popref_1 u_1(t) ~\lra~ u_2(s) \pononpref_2 u_2(t).
        \end{gather*}
        $G_{(\leq_1, \leq_2)}$ is \emph{zero-sum} if $u_1 = u_2$, and $\leq_2 {=} \geq_1$: The players always get equal payoffs, but prefer them just in reverse order.
        \label{def:zeroSumGeneralizedPayoffs}
    \end{defn}
    
    The advantage of zero-sum games over antagonistic games is that the zero-sum property is preserved when taking mixed extensions (which we will define shortly), while the mixed extension of an antagonistic game needs not be antagonistic. 
    This is analogous to real-valued games, where mixed extensions also preserve the zero-sum property, but not necessarily the antagonism property (see \cite{bib:andersonAntagonisticGames}).
    \footnote{Note that not only zero-sum satisfy this property, but constant-sum games as well, so our naming convention seems a bit arbitrary.
        Yet “zero-sum” appears like the most recognizable and easily-understood term for this concept, so we'll stick to this name even though nothing actually sums to zero.}
    
    It is possible to define mixed extensions if the payoff set $A$ has additional vector-space structure. Since mixed strategies correspond to convex combinations of pure strategies, we require $A$ be a convex subset of a real vector space.
    \begin{defn}[Mixed extension]
        Let $A$ be a convex subset of a real vector space.
        Let $G = (n, A, (S_1, \dots, S_n), (u_1, \dots, u_n))$ be a finite game with generalized payoffs in $A$.
        Its \emph{mixed extension} $\hat{G} = (n, A, (\Delta_1, \dots, \Delta_n), (\hat{u}_1, \dots, \hat{u}_n))$ consists of the following components for each player $k \in [n]$:
        \begin{itemize}
            \item $\Delta_k \coloneqq \biggset{ \delta \in \Rp^{S_k} \biggmid \sum_{s_k \in S_k} \delta(s_k) = 1 } $ are the \emph{mixed strategies}, $\Delta \coloneqq \bigtimes\limits_{i \in [n]} \Delta_i$ the \emph{mixed strategy profiles}.
            
            \item
            The utility function $\hat{u}_k: \Delta \to A$ maps each mixed strategy profile to
            its payoff:
            \begin{gather}
                \hat{u}_k: 
                (\delta_1, \dots, \delta_n)
                \mapsto
                \sum_{(s_1, \dots, s_n) \in S} \biggpars{\prod_{i=1}^{n} \delta_i(s_i)} * u_k( (s_1, \dots, s_n) ).
                \label{eq:generalizedPayoffsMixedStrategyUtility}
            \end{gather}
        \end{itemize}
    \end{defn}

    \begin{rem}
        As for real-valued games, we can specify payoffs by one or more matrices. We use terms analogous to the real-valued theory, see Remark \ref{rem:mixedExtensionsRemark}, \ref{item:mixedExtensionsRemark-matrixBimatrixGamesRemark}:
        When talking about the \emph{bimatrix game} specified by two matrices, we mean the mixed extension of the finite game corresponding to the matrices.
        In the zero-sum case, where only one matrix is specified, we will talk about \emph{matrix games}.
    \end{rem}
    
    As a last concept, we define the notion of \emph{isomorphic} games, which lets us switch between different payoff sets that behave the same with respect to compatible orderings.
    \begin{defn}
        Let $G$, $H$ be two $n$-player games with generalized payoffs sharing the same strategy sets $S_k$, where $G$ has payoffs $u_k: S \to A$ and $H$ has payoffs $v_k: S \to B$.
        Let $G$ be equipped with $(\leq_1^{\scriptscriptstyle G}, \dots, \leq_n^{\scriptscriptstyle G})$, and $H$ be equipped with $(\leq_1^{\scriptscriptstyle H}, \dots, \leq_n^{\scriptscriptstyle H})$.
        Then $G_{(\leq_1^{\scriptscriptstyle G}, \dots, \leq_n^{\scriptscriptstyle G})}$ is \emph{isomorphic} to $H_{(\leq_1^{\scriptscriptstyle H}, \dots, \leq_n^{\scriptscriptstyle H})}$
        if there is a bijection $\phi: A \to B$ such that
        \begin{align}
            &\forall k  \in [n]:~ v_k = \phi \circ u_k, \\
            &\forall k  \in [n]:~ \forall a_1, a_2 \in A: a_1 \leq_k^{\scriptscriptstyle G} a_2 \lra \phi(a_1) \leq_k^{\scriptscriptstyle H} \phi(a_2).
            \label{eq:isomorphicGeneralizedPayoffGames}
        \end{align}
    \end{defn}

    \begin{rem}
        \sloppypar{
        The isomorphism property is symmetric:  $G_{(\leq_1^{\scriptscriptstyle G}, \dots, \leq_n^{\scriptscriptstyle G})}$ is isomorphic to $H_{(\leq_1^{\scriptscriptstyle H}, \dots, \leq_n^{\scriptscriptstyle H})}$ iff  $H_{(\leq_1^{\scriptscriptstyle H}, \dots, \leq_n^{\scriptscriptstyle H})}$ is isomorphic to $G_{(\leq_1^{\scriptscriptstyle G}, \dots, \leq_n^{\scriptscriptstyle G})}$, as we can use the bijection $\phi\inv$.
        Therefore we can say that two games \emph{are isomorphic} without specifying a direction.}
    \end{rem}

    The reason we use the concept of isomorphic games is that it lets us change the payoff set of a game while preserving its Nash equilibria. This is shown by the next lemma.
    
    \begin{lemma}
        If two games with generalized payoffs $G_{(\leq_1^{\scriptscriptstyle G}, \dots, \leq_n^{\scriptscriptstyle G})}$, $H_{(\leq_1^{\scriptscriptstyle H}, \dots, \leq_n^{\scriptscriptstyle H})}$ are isomorphic,
        they have the same Nash equilibria.
    \end{lemma}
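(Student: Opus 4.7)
The plan is to show that for every strategy profile $s = (s_1, \dots, s_n) \in S$ and every player $k \in [n]$, the defining inequality \eqref{eq:nashEquilibriumGeneralizedPayoffs} of a Nash equilibrium in $G$ is logically equivalent to the corresponding inequality in $H$. Since the two games are defined on identical strategy sets $S_1, \dots, S_n$ by the definition of isomorphism, the same strategy profile $s$ is eligible as a Nash equilibrium candidate in both games, and the equivalence of the defining inequalities will immediately imply that the Nash equilibrium sets coincide.

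To carry this out, I would fix $s \in S$, a player $k$, and an arbitrary $\tilde{s}_k \in S_k$, and then perform a short chain of equivalences. Using $v_k = \phi \circ u_k$, the condition $v_k(s_k, s_{-k}) \geq_k^{\scriptscriptstyle H} v_k(\tilde{s}_k, s_{-k})$ can be rewritten as $\phi(u_k(s_k, s_{-k})) \geq_k^{\scriptscriptstyle H} \phi(u_k(\tilde{s}_k, s_{-k}))$. Next I would invoke the order-preservation biconditional in \eqref{eq:isomorphicGeneralizedPayoffGames}, applied to $a_1 = u_k(\tilde{s}_k, s_{-k})$ and $a_2 = u_k(s_k, s_{-k})$, to replace this by $u_k(s_k, s_{-k}) \geq_k^{\scriptscriptstyle G} u_k(\tilde{s}_k, s_{-k})$. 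Quantifying over $\tilde{s}_k \in S_k$ and then over $k \in [n]$ yields exactly the equivalence of the Nash equilibrium conditions for $G_{(\leq_1^{\scriptscriptstyle G}, \dots, \leq_n^{\scriptscriptstyle G})}$ and $H_{(\leq_1^{\scriptscriptstyle H}, \dots, \leq_n^{\scriptscriptstyle H})}$.

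The proof is essentially a direct unfolding of definitions, so there is no serious obstacle. The only subtle point worth flagging is that condition \eqref{eq:isomorphicGeneralizedPayoffGames} is a biconditional rather than a one-way implication; both directions are needed to obtain the equality of Nash equilibrium sets, not just one inclusion. It is also worth noting that bijectivity of $\phi$ is not actually required for the argument, but it is harmless to retain, and it is the biconditional in \eqref{eq:isomorphicGeneralizedPayoffGames} that does the real work.
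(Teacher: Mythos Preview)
Your proposal is correct and follows essentially the same route as the paper: both arguments unfold the definitions and use \eqref{eq:isomorphicGeneralizedPayoffGames} together with $v_k = \phi \circ u_k$ to translate the Nash inequality between the two games. The only cosmetic difference is that the paper proves one inclusion and then appeals to the symmetry remark (using $\phi^{-1}$) for the other, whereas you use the biconditional in \eqref{eq:isomorphicGeneralizedPayoffGames} directly to get both directions at once; your observation that bijectivity of $\phi$ is not actually needed is a nice sharpening.
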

    \begin{proof}
        Let $s = (s_1, \dots, s_n) \in S$ be a Nash equilibrium of $G_{(\leq_1^{\scriptscriptstyle G}, \dots, \leq_n^{\scriptscriptstyle G})}$.
        We show that $s$ is a Nash equilibrium of $H_{(\leq_1^{\scriptscriptstyle H}, \dots, \leq_n^{\scriptscriptstyle H})}$. Let $k \in [n]$, $\tilde{s}_k \in S_k$.
        We have $v_k(\tilde{s}_k, s_{-k}) = \phi(u_k(\tilde{s}_k, s_{-k}))$. Because $s$ is a Nash equilibrium of $G_{(\leq_1^{\scriptscriptstyle G}, \dots, \leq_n^{\scriptscriptstyle G})}$, we get $u_k(\tilde{s}_k, s_{-k}) \leq_k^{\scriptscriptstyle G} u_k(s_k, s_{-k})$.
        By \eqref{eq:isomorphicGeneralizedPayoffGames}, this implies $\phi(u_k(\tilde{s}_k, s_{-k})) \leq_k^{\scriptscriptstyle H} \phi(u_k(s_k, s_{-k})) = v_k(s_k, s_{-k})$.
        So $v_k(\tilde{s}_k, s_{-k}) \leq_k^{\scriptscriptstyle H} v_k(s_k, s_{-k})$, proving the claim.
        By the symmetry pointed out in the previous remark, it also holds that any Nash equilibrium of $H_{(\leq_1^{\scriptscriptstyle H}, \dots, \leq_n^{\scriptscriptstyle H})}$ is a Nash equilibrium of $G_{(\leq_1^{\scriptscriptstyle G}, \dots, \leq_n^{\scriptscriptstyle G})}$, concluding the proof.
    \end{proof}
    
    \section{Distribution-Valued Normal-Form Games}
    \label{sec:distributionValuedNormalFormGames}
    \newcommand{\DP}{\mathcal{D}} 
    \newcommand{\Dgeqzero}{\DP_{\geq0}}
    \newcommand{\Dgeqone}{\DP_{\geq1}}
    \newcommand{\Dab}{\DP_{[a, b]}}
    
    \sloppypar{
    In this section we use the framework from the previous section to introduce distribution-valued games.
    This comes down to picking the right payoff set and choosing a preorder on it.
    For the payoff set, let $\DP$ denote the set of Borel probability measures on $\R$:}
    \begin{gather*}
        \DP \coloneqq \set{P: \B \to \Rp \mid P \text{ is a probability measure on } (\R, \B)}.
    \end{gather*}
    Each element of $\DP$ represents a probability distribution on the real numbers.
    We write $\Dgeqzero$, $\Dgeqone$, or $\Dab$ for the subsets of $\DP$ whose elements' supports are contained in $[0, \infty)$, $[1, \infty)$, or some interval $[a, b]$, respectively.
    Recall that for $k \in \N$, $\M^k \coloneqq \set{P \in \DP \mid \lintegral{\R}{\abs{x}^k}{P(x)} < \infty}$ denotes the set of measures in $\DP$ that have a finite moment of order $k$.
    
    \let\dleq\preccurlyeq
    \let\dgeq\succcurlyeq
    \let\dless\prec
    \let\dgreater\succ
    
    \newcommand{\leqE}{\dleq_E}
    \newcommand{\leqst}{\dleq_{\text{st}}}
    \newcommand{\leqtail}{\dleq_{\text{tail}}}
    \newcommand{\geqtail}{\dgeq_{\text{tail}}}
    \newcommand{\lesstail}{\dless_{\text{tail}}}
    \newcommand{\greatertail}{\dgreater_{\text{tail}}}
    \newcommand{\leqRlex}{\leq_{\textsc{Rlex}}}
    \newcommand{\lessRlex}{<_{\textsc{Rlex}}}
    \newcommand{\geqRlex}{\geq_{\textsc{Rlex}}}
    \newcommand{\greaterRlex}{>_{\textsc{Rlex}}}
    \newcommand{\leqLex}{\leq_{\textsc{Lex}}}
    \newcommand{\leqtw}[1][]{\dleq_{\text{\tiny tw,$\scriptscriptstyle\ifstrempty{#1}{\mathcal{I}}{#1}$}}}
    \newcommand{\geqtw}[1][]{\dgeq_{\text{\tiny tw,$\scriptscriptstyle\ifstrempty{#1}{\mathcal{I}}{#1}$}}}
    \newcommand{\lesstw}[1][]{\dless_{\text{\tiny tw,$\scriptscriptstyle\ifstrempty{#1}{\mathcal{I}}{#1}$}}}
    \newcommand{\greatertw}[1][]{\dgreater_{\text{\tiny tw,$\scriptscriptstyle\ifstrempty{#1}{\mathcal{I}}{#1}$}}}
    \newcommand{\leqpareto}{\leq_{\text{par}}}
    \newcommand{\geqpareto}{\geq_{\text{par}}}
    \newcommand{\lesspareto}{<_{\text{par}}}
    \newcommand{\greaterpareto}{>_{\text{par}}}
    \newcommand{\indiffpareto}{\sim_{\text{par}}}
    \newcommand{\leqdom}{\leq_{\text{dom}}}
    \newcommand{\geqdom}{\geq_{\text{dom}}}
    \newcommand{\lessdom}{<_{\text{dom}}}
    \newcommand{\greaterdom}{>_{\text{dom}}}
    
    \begin{defn}[Stochastic Order]
        A preorder on $\DP$ is called a \emph{stochastic order}.
        \footnote{The reference on stochastic orders \cite{bib:shakedStochasticOrders} defines stochastic orders between random variables, but for our purposes it is more convenient to define them between distributions instead.}
    \end{defn}
    
    \begin{ex}~
        \label{ex:stochasticOrdersLeqELeqst}
        \begin{enumerate}
            \item 
            Let $\leqE$ be the ordering that compares distributions by expected value:
            \begin{gather*}
                P_1 \leqE P_2 \colonlra \text{ $P_1, P_2 \in \M^1$ and $E(P_1) \leq E(P_2)$}.
            \end{gather*}
            Then $\leqE$ is a stochastic ordering, but neither antisymmetric nor total. Antisymmetry fails to hold since many distributions can have the same expected value, in which case $\leqE$ is indifferent between them. Totality fails because not all distributions have a well-defined expected value. However $\leqE$ is a total stochastic order on $\M^1$, which includes all distributions with bounded support.
            
            \item Let the \emph{usual stochastic order} $\leqst$ (see \cite{bib:shakedStochasticOrders}) be defined by 
            \begin{gather*}
                P_1 \leqst P_2 \colonlra \forall x \in \R: P_1((x, \infty)) \leq P_2((x, \infty)).
            \end{gather*}
            Phrased in terms of distribution functions, this holds iff $F_1 \geq F_2$ point-wise: For any $x \in X$, $F_1(x) \geq F_2(x) \lra P_1((-\infty, x]) \geq P_2((-\infty, x])$, which is equivalent to $P_1((x, \infty)) \leq P_2((x, \infty))$. The ordering captures a strong notion of one distribution tending to take smaller values than the other. Because of this strong requirement, it is not surprising that $\leqst$ is not total and there are many pairs of incomparable distributions. However the ordering is antisymmetric, because $\leqst$ is indifferent only between probability measures with equal distribution functions, and a distribution function uniquely determines the probability measure.
        \end{enumerate}
    \end{ex}

    \let\dpref\dgeq
    \let\dnonpref\dleq
    \let\dstrpref\dgreater
    \let\dstrnonpref\dless
    

    \begin{defn}[Distribution-Valued Game]
        A game $G$ with generalized payoffs in $\DP$ is called a \emph{distribution-valued game}.
        \label{def:distributionValuedGame}
    \end{defn}
    The underlying semantics are that a distribution-valued game models a situation where the outcomes have some probabilistic uncertainty associated with them, whose distribution is known before-hand: When playing the game, first an outcome distribution for each player is determined by the selected strategies, and then the players gets their actual real-valued payoff drawn from their distributions independently.
    However, the drawing from the outcome distributions at the last step is not explicitly modeled. The stochastic orders of the players reflect how each player values particular outcome distributions.
    \autoref{fig:distributionValuedGame} illustrates the definition with an example of the first player's payoffs in a distribution-valued game.
    
    \begin{figure}
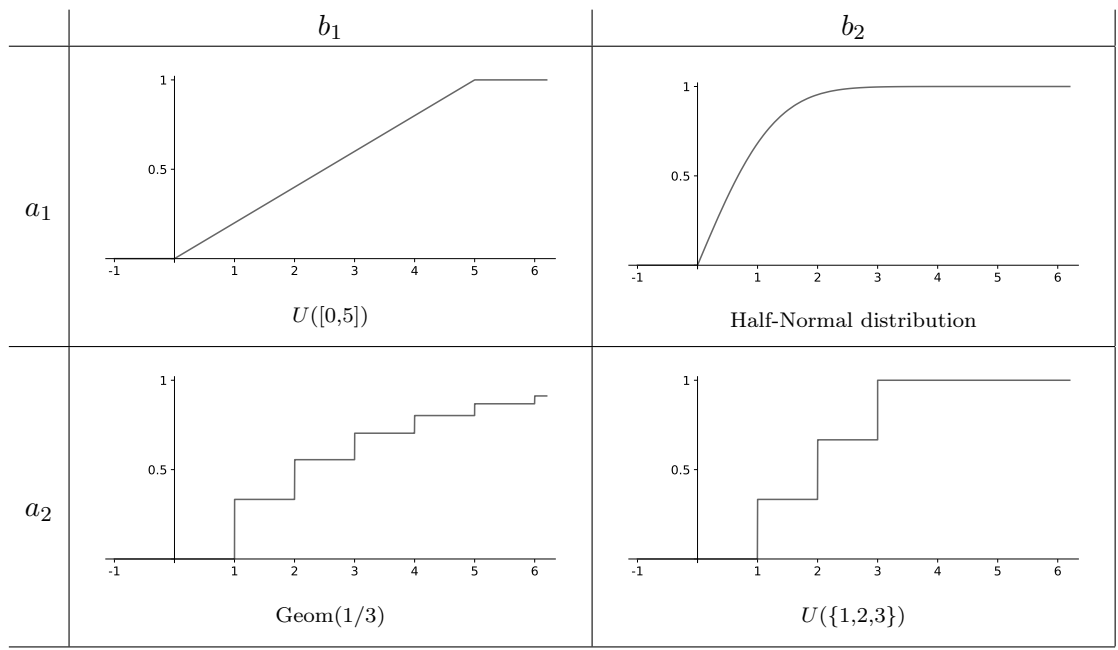

        \centering
        \begin{tabular}{c|c|c|}
            &        $b_1$        &        $b_2$        \\ \hline
            \raisebox{45pt}{$a_1$} & \shortstack{\adjustbox{margin=5pt 5pt 5pt 8pt}{\includegraphics[width=0.40\textwidth]{Pictures/example-distval-game-F1}}  \\ $\scriptstyle U([0, 5])$ \\~ } & \shortstack{\adjustbox{margin=5pt 5pt 5pt 8pt}{\includegraphics[width=0.40\textwidth]{Pictures/example-distval-game-F2}}  \\ $\scriptstyle \text{Half-Normal distribution}$ \\~ } \\\hline
            \raisebox{45pt}{$a_2$} & \shortstack{\adjustbox{margin=5pt 5pt 5pt 8pt}{\includegraphics[width=0.40\textwidth]{Pictures/example-distval-game-F3}}  \\ $\scriptstyle \text{Geom}(1/3)$ \\~ } & \shortstack{\adjustbox{margin=5pt 5pt 5pt 8pt}{\includegraphics[width=0.40\textwidth]{Pictures/example-distval-game-F4}}  \\ $\scriptstyle U(\set{1, 2, 3})$ \\~ } \\ \hline
        \end{tabular}
        \caption{Player 1 payoff matrix in a distribution-valued 2x2-game with both AC and discrete distributions as payoffs (represented by their distribution functions).}
        \label{fig:distributionValuedGame}
    \end{figure}

    The definitions of Nash equilibria, zero-sum games and mixed extensions of distribution-valued games can be adopted from Section \ref{sec:generalizedPayoffGames}.
    In particular, mixed extensions can be defined because $\DP$ is a convex subset of the real vector space of signed finite Borel measures: A convex combination of several distributions from $\DP$, obtained by a mixed strategy, corresponds to a mixture of the involved distributions.
    
    The definition of zero-sum games may need some additional discussion: The argument for the definition given in Section \ref{sec:generalizedPayoffGames} was that without knowing more about the structure of the payoff set, we could not define zero-sum games by requiring that payoffs actually sum to zero.
    On the other hand, now that we have a more concrete payoff set, possibly a more natural definition could be made.    
    However several seemingly “natural candidates” for such a definition do not work: Let $s \in S$ be some strategy profile.
    Clearly, we cannot require $u_1(s) + u_2(s) = 0$ (the zero-everywhere measure), since probability measures only assign non-negative values.
    If instead we let $X_1 \sim u_1(s), X_2 \sim u_2(s)$ be independent random variables modeling the random payoffs for the players, and require that $X_1 + X_2 = 0$ almost surely, this also fails: This condition will never hold except for trivial distributions, because the sum of independent random variables is distributed as the convolution of their distributions.
    Alternatively, we could require that the distributions $u_1(s), u_2(s)$ are symmetric mirror images around the origin, in the sense that for all Borel sets $B$, $u_1(s)(B) = u_2(s)(-B)$. For distributions with densities $f_1, f_2$, this would mean $f_1(x) = f_2(-x)$ for (almost) all $x$.
    While this requirement seems to make sense, this symmetry is not necessarily reflected by the stochastic orders: There is no requirement for stochastic orders to satisfy $P_1(\dummydot) \dleq P_2(\dummydot) \lra P_1(-\dummydot) \dgeq P_2(-\dummydot)$. So if we chose this definition, there could be zero-sum games without the antagonism property, where a certain strategy profile is preferred to another profile by both players.
    For the same reason, we cannot base our definition on requiring the expected values $E(u_1(s)), E(u_2(s))$ to sum to zero: It is possible that $E(u_1(s)) + E(u_2(s)) = E(u_1(t)) + E(u_2(t)) = 0$, yet $u_1(s) \dstrnonpref_1 u_1(t)$ and $u_2(s) \dstrnonpref_2 u_2(t)$.
    Since no alternative definition seems meaningful, we stick to the zero-sum definition made in Definition \ref{def:zeroSumGeneralizedPayoffs}.

    In the classical real-valued model discussed in Chapter \ref{chap:nonCooperativeRealValuedGameTheory}, mixed strategies are rated by their expected payoff. It is worth noting that this classical model emerges as a special case of the distribution-valued model if we choose the right stochastic orderings.
    \begin{ex}
        Let $\delta_x: A \mapsto \1_A(x)$ denote the Dirac probability measure.
        For a given finite real-valued game $G^{(\R)}$ with payoffs $u_k^{(\R)}$, define its distribution-valued finite counterpart $G^{(\DP)}$ to have payoffs
        \begin{gather*}
            \forall k \in [n]: ~u_k^{(\DP)}(s) \coloneqq \delta_{u_k^{(\R)}(s)}.
        \end{gather*}
        Then the real-valued mixed extension $\hat{G}^{(\R)}$ is isomorphic to the distribution-valued mixed extension $\hat{G}^{(\DP)}$ with respect to $\leqE$ (see Example \ref{ex:stochasticOrdersLeqELeqst}).
    \end{ex}
    Games constructed like $\hat{G}^{(\DP)}$ in the previous example might be interesting in their own right, with respect to other stochastic orders than $\leqE$:
    The model allows to talk about the “payoff distribution” of the real-valued game $G$, and rate outcomes by other orders than the expected value. 
    In decision-theoretic terms, this allows to model risk-averse and risk-seeking attitudes, as opposed to the risk-neutral attitude of expected value. We will however not further pursue this line of thought, as our main interest lies in mixed extensions of games where the pure-strategy payoffs already are non-trivial distributions.
    
    \section{The Stochastic Tail Order}
    \label{sec:stochasticTailOrder}
    We have laid out the foundations of distribution-valued games in the last section, but the usefulness of the model mostly depends on choosing the right stochastic orders.
    The stochastic orders we have seen so far are not satisfying in many cases: The expected value ordering $\leqE$ basically leads us back to the theory of real-valued games by taking expected values. The usual stochastic order $\leqst$ takes more information from the distributions into account, but fails to compare many distributions since a decision is only made for distributions where one distribution function dominates the other in a strong way.
    In an attempt to extend the idea of the usual stochastic order $\leqst$, Stefan Rass in a series of papers (\cite{bib:rassGameRiskManagI,bib:rassGameRiskManagII,bib:rassTotalOrderingOnLossDistributions}) defined an ordering which we call the \emph{stochastic tail order} and denote by $\leqtail$
    \footnote{The papers by Rass do not give a name to the ordering. We use the name \emph{tail order} because it compares distributions by their tails, which will become clear soon.}.
    It was introduced in the context of risk assessment for critical infrastructure, where risks with higher impact are to  be avoided at all costs, even if very unlikely.
    Therefore it is usually not applied to compare \emph{payoff distributions}, but rather to compare \emph{loss} (or \emph{cost}) \emph{distributions}. This detail does not affect our presentation of the theory, however, as we can simply consider or distribution-valued games to be ordered with respect $\geqtail$ instead of $\leqtail$ if the “payoffs” are supposed to represent losses.
    The tail order is defined based on moment sequences, but we will later show that in many cases, it can be understood as a kind of lexicographic ordering: The original goal for its introduction was to have an ordering that applies the criterion of $\leqst$, but only from some point $x_0$ on, so that more distributions are actually comparable (see \cite{bib:rassTotalOrderingOnLossDistributions}). This works to some extent, however in this section we will also see that some properties of the ordering claimed in \cite{bib:rassTotalOrderingOnLossDistributions} can fail in pathological cases.

    \subsection{Definition and Basic Properties}
    \label{subsec:tailOrderDefinitionAndBasicProperties}
    Recall that $m_k(P) \coloneqq \lintegral{R}{x^k}{P}$ denotes the $k$-th moment of $P \in \DP$ if $\lintegral{R}{\abs{x}^k}{P} < \infty$, and $M$ denotes the set of probability measures from $\DP$ that have moments of all orders.
    \begin{defn}[{Stochastic Tail Order, cf. \cite[Definition 2]{bib:rassTotalOrderingOnLossDistributions}}]
        \label{def:stochasticTailOrder}
        The \emph{stochastic tail order} $\leqtail$ is defined for $P_1, P_2 \in \DP$ by the following condition:
        \begin{multline*}
            P_1 \leqtail P_2 \colonlra \\
            (P_1 = P_2) \vee \bigpars{(P_1, P_2 \in \M) \wedge (m_k(P_1) \leq m_k(P_2) \text{ for all but finitely many $k \in \N_0$})}.
        \end{multline*}
    \end{defn}
    The definition differs slightly from \cite[Definition 2]{bib:rassTotalOrderingOnLossDistributions} in that we do not model payoffs by random variables, but instead by probability measures, and we relax the assumptions on orderings to be comparable:
    According to the original definition, only distributions in $\Dgeqone$ with bounded support that are either discrete, or absolutely continuous with continuous density function, should be comparable. To keep our discussion as general as possible, we avoid these rather strict assumptions and only demand that comparable distributions must have moments of all orders.
    However it must be noted, and will become clear in the theorems of this section, that $\leqtail$ in its original intention to generalize $\leqst$ only makes sense for elements of $\Dgeqone \cap M$.

    \begin{lemma}
        The tail order $\leqtail$ is a stochastic order, i.e. a preorder on $\DP$.
    \end{lemma}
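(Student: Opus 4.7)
The plan is to verify the two defining axioms of a preorder, reflexivity and transitivity, directly from Definition \ref{def:stochasticTailOrder}. Reflexivity is immediate: for any $P \in \DP$, the first clause $P_1 = P_2$ in the disjunction is satisfied by taking $P_1 = P_2 = P$, so $P \leqtail P$ with no condition on moments needed.

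For transitivity, I would assume $P_1 \leqtail P_2$ and $P_2 \leqtail P_3$ and argue by a small case split on which clause of the disjunction is responsible for each. If either $P_1 = P_2$ or $P_2 = P_3$, the conclusion $P_1 \leqtail P_3$ is immediate by substitution. In the remaining case, both pairs are related via the moment clause, so $P_1, P_2, P_3 \in \M$, and there exist finite exceptional sets $E_{12}, E_{23} \subseteq \N_0$ such that $m_k(P_1) \leq m_k(P_2)$ for $k \notin E_{12}$ and $m_k(P_2) \leq m_k(P_3)$ for $k \notin E_{23}$.

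The key observation is that $E_{12} \cup E_{23}$ is again finite, and for every $k \notin E_{12} \cup E_{23}$ we may chain the two inequalities to obtain $m_k(P_1) \leq m_k(P_3)$. Thus $m_k(P_1) \leq m_k(P_3)$ holds for all but finitely many $k$, and since $P_1, P_3 \in \M$, the second clause of Definition \ref{def:stochasticTailOrder} applies, giving $P_1 \leqtail P_3$.

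There is essentially no obstacle here; the only thing to be mindful of is that the definition has a disjunctive structure (equality vs.\ the moment condition), so transitivity must be checked for each combination of clauses, but each case either reduces to substitution or to the trivial fact that a union of two finite subsets of $\N_0$ is finite.
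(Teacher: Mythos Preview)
Your proof is correct and follows essentially the same approach as the paper: reflexivity by the equality clause, transitivity by chaining the moment inequalities outside a finite exceptional set (equivalently, from index $\max(K_1,K_2)$ onward). You are in fact slightly more careful than the paper in explicitly handling the case split where one of the relations holds via $P_1 = P_2$ or $P_2 = P_3$, which the paper glosses over.
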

    \begin{proof}
        The tail order is reflexive since for any $P_1 \in \DP$, $P_1 \leqtail P_1$ by definition.
        It is also transitive: If $P_1, P_2, P_3 \in \DP$ and $P_1 \leqtail P_2, P_2 \leqtail P_3$, then $P_1, P_2, P_3 \in M$.        
        Therefore $m_k(P_1) \leq m_k(P_2)$ for all $k \geq K_1$ and $m_k(P_2) \leq m_k(P_3)$ for all $k \geq K_2$, and $m_k(P_1) \leq m_k(P_3)$ for all $k \geq \max(K_1, K_2)$, so $P_1 \leqtail P_3$.
    \end{proof}
    
    The next question is whether $\leqtail$ is antisymmetric and/or total.
    In the general context, it has neither of the properties: The order is trivially not total on $\DP$ as it can only compare distributions from $M$.
    From Definition \ref{def:stochasticTailOrder} it is clear that two distributions from $M$ are incomparable iff their moment sequences alternate. Moment sequences of two distributions from $M$ can alternate because distributions can take negative values -- as an example, consider the Dirac distributions $\delta_{-1}$ and $\delta_{0}$: The moment sequence of the former is given by $(-1)^n$, alternating around the sequence of the latter which is constantly $0$, making the distributions not $\leqtail$-comparable.
    
    Antisymmetry does not hold on all of $M$ neither: There exist examples of different distributions in $\M$ which have equal moment sequences (an example is given in \cite[Example 3.15]{bib:romanoCounterexamplesInProbability}).
    On the other hand, if a probability measure $P_1 \in \DP$ has bounded support, then no other probability measure from $\DP$ has the same moment sequence (cf. \cite[Corollary 4.2]{bib:schmuedgenTheMomentProblem}).
    This even holds with the weaker condition that the moment-generating function of the probability measure exists in a neighborhood of 0 (cf. \cites[Lemma 2.3]{bib:rassGameRiskManagI}[p.414]{bib:billingsleyProbabilityAndMeasure})
     \footnote{The moment-generating function of a probability measure $P \in \DP$ is given by $M(s) \coloneqq \lintegral{\R}{e^{sx}}{P(x)}$ for $s \in \R$ if the integral is finite, cf. \cite[(21.21)]{bib:billingsleyProbabilityAndMeasure}}.
    However, these results do not guarantee the antisymmetry of $\leqtail$ on the respective subset of $\DP$, as $\leqtail$ could be indifferent between two distributions with different moment sequences if they disagree only finitely often. It is not clear if such a case can actually occur, so whether or not $\leqtail$ is antisymmetric between distributions with bounded support, or another sufficiently large subset of $\DP$, remains open.
    
    In a similar way, we can try to identify a sufficiently large class of distributions on which $\leqtail$ is total. We will discuss this question in more detail later in Section \ref{subsec:tailOrderTotality}.
    
    \subsection{Sufficient Conditions for Tail Order Preference}
    Before we do so, let us first get a better understanding of what the tail order means apart from moment sequences.
    In \cite{bib:rassGameRiskManagI}, the discussion is restricted to random variables taking values in $[1, \infty)$, since the behavior of moment sequences differs greatly for values between $[0, 1)$, or even negative values. Additionally, the support is assumed to be bounded. Also, the discussion focuses only on discrete distributions with finite support, or absolutely continuous distributions with additional continuity/differentiability constraints on the densities.
    We will not need such strict assumptions everywhere, and will always mention the exact requirements.
    
    An important intuition for understanding the tail order is that it behaves essentially like a lexicographic ordering on the density functions in many cases: Specifically, if the density function of one distribution overtakes the density function of another at some point, and dominates from there on, then the first distribution is greater with respect to $\leqtail$.
    That vague intuition will be formalized in the following theorems in the form of sufficient conditions, and an equivalence in the case of finite support.
    
    \begin{thm}
        Let $P_1, P_2 \in \Dgeqone \cap \M$. Assume there is some $x_0 \geq 1$ such that
        \begin{align}
            \forall B \in \B, &~B \subseteq [x_0, \infty): P_1(B) \leq P_2(B)  \label{eq:p2DominatesP1LeftOfX0},  \\
            &~P_1([x_0, \infty)) < P_2([x_0, \infty)) \label{eq:p2StrictlyDominatesP1OnB0}.
        \end{align}
        Then $P_1 \lesstail P_2$.
        \label{thm:tailOrderSufficientConditionsGeneral}
    \end{thm}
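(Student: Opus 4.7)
I would show that $m_k(P_2) > m_k(P_1)$ for all sufficiently large $k$; by Definition \ref{def:stochasticTailOrder} this yields $P_1 \leqtail P_2$, and together with $P_1 \neq P_2$ (immediate from \eqref{eq:p2StrictlyDominatesP1OnB0}) and the failure of $P_2 \leqtail P_1$ (whose defining moment inequality would contradict the one just established) this gives $P_1 \lesstail P_2$.

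The key object is the signed finite Borel measure $\mu \coloneqq P_2 - P_1$. Condition \eqref{eq:p2DominatesP1LeftOfX0} says that the restriction of $\mu$ to $[x_0, \infty)$ is a non-negative measure, and \eqref{eq:p2StrictlyDominatesP1OnB0} then says that $\epsilon \coloneqq \mu([x_0, \infty)) > 0$. Because $P_1, P_2 \in \M$, all moments are well-defined and $m_k(P_2) - m_k(P_1) = \lintegral{[1, \infty)}{x^k}{\mu}$. My plan is to renormalize by $x_0^k$ and split the integration at $x_0$:
\begin{gather*}
\frac{m_k(P_2) - m_k(P_1)}{x_0^k} \;=\; \lintegral{[1, x_0)}{(x/x_0)^k}{\mu} \,+\, \lintegral{[x_0, \infty)}{(x/x_0)^k}{\mu}.
\end{gather*}

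On $[1, x_0)$ the integrand is bounded by $1$ and converges pointwise to $0$; applying dominated convergence to each of the finite Jordan parts $\mu^+, \mu^-$ of $\mu$ restricted to $[1, x_0)$ shows that the first term vanishes as $k \to \infty$. On $[x_0, \infty)$ the integrand is pointwise at least $1$ and $\mu$ is a non-negative measure there, so the second term is bounded below by $\mu([x_0, \infty)) = \epsilon$. Hence $\liminf_{k \to \infty} (m_k(P_2) - m_k(P_1))/x_0^k \geq \epsilon > 0$, and since $x_0 \geq 1$ this delivers $m_k(P_2) > m_k(P_1)$ for all but finitely many $k$, as wanted.

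The main obstacle, in my view, is choosing the right scale. Working directly with $\lintegral{}{x^k}{\mu}$, the negative contribution from $[1, x_0)$ can only be bounded in absolute value by a constant times $x_0^k$, which in the worst case (where $\mu$ on $[x_0, \infty)$ is concentrated near $x_0$, so the positive contribution is of order $\epsilon x_0^k$) has the same order as the positive contribution; the naive estimate thus cannot conclude. Dividing by $x_0^k$ turns $[1, x_0)$ into a region where the integrand decays geometrically, which is exactly the structure dominated convergence needs to make the awkward term disappear and leave the positive $\epsilon$ behind.
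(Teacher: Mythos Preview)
Your proof is correct. The renormalization by $x_0^k$ followed by dominated convergence on $[1,x_0)$ is clean and does exactly what is needed; the lower bound $\epsilon$ on $[x_0,\infty)$ is immediate from the non-negativity of $\mu$ there.

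The paper takes a different, more elementary route that avoids the dominated convergence theorem entirely. Instead of passing to a limit, it inserts an auxiliary point $x_1 \in (1, x_0)$, chosen via continuity from below of $P_1$ so that $P_1((x_1, x_0)) < \mu([x_0,\infty))$. It then bounds the negative contribution on $[1,x_1]$ by $x_1^k \cdot P_1([1,x_1])$ and the negative contribution on $(x_1,x_0)$ by $x_0^k \cdot P_1((x_1,x_0))$; the latter is beaten by the positive term $x_0^k \cdot \mu([x_0,\infty))$ by the choice of $x_1$, and the former loses to anything of order $x_0^k$ because $x_1 < x_0$. In effect the paper handles your ``main obstacle'' by shaving off a thin strip near $x_0$ whose $P_1$-mass is small enough, rather than by letting the integrand decay. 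Your approach is shorter and conceptually cleaner; the paper's approach is entirely hands-on and yields explicit inequalities at each step without invoking any convergence theorem.
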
    
    \begin{proof}
        To show that $m_k(P_1) < m_k(P_2)$ for all $k$ large enough, we show that $m_k(P_2) - m_k(P_1) > 0$ for all $k$ large enough.
        The proof will proceed as follows: $m_k(P_2) - m_k(P_1)$ corresponds to an integral expression, and we decompose the integration domain $[1, \infty)$ into intervals $[1, x_1]$, $(x_1, x_0)$, and $[x_0, \infty)$. We then show that the integral over $[x_0, \infty)$ grows at least proportionally to $-x_0^k$ and the integral over $[1, x_1]$ shrinks at most proportionally to $-x_1^k$. We pick $x_1$ close enough to $x_0$ such that the integral over $(x_1, x_0)$, which is estimated to shrink at most proportionally to $x_0^k$, does not cancel out the one over $[x_0, \infty)$. Since $x_1 < x_0$, the positive growth of $x_0^k$ dominates the negative growth of $x_1^k$ eventually, showing that $m_k(P_2) - m_k(P_1) > 0$ for large enough $k$.
        
        We start by calculating the difference of moments as follows:
        \begin{align*}
            m_k(P_2) - m_k(P_1)
            &= \lintegral{\R}{x^k}{P_2(x)} - \lintegral{\R}{x^k}{P_1(x)} \\
            &= \biggpars{\lintegral{[1, x_0)}{x^k}{P_2(x)} - \lintegral{[1, x_0)}{x^k}{P_1(x)}}
            + \lintegral{[x_0, \infty)}{x^k}{\undersetbrace{(P_2 - P_1)}{\eqqcolon \mu}(x)}.
        \end{align*}
        We define $\mu \coloneqq \restr{(P_2 - P_1)}{\B \cap [x_0, \infty)}$ as the difference ($P_2 - P_1)$ restricted to the Borel sub-$\sigma$-algebra on $[x_0, \infty)$. By \eqref{eq:p2DominatesP1LeftOfX0}, $\mu$ is non-negative and therefore a measure, and by \eqref{eq:p2StrictlyDominatesP1OnB0}, it is not the zero measure.
        We can now estimate
        \begin{gather*}
            \lintegral{[x_0, \infty)}{x^k}{\mu(x)}
            \geq \lintegral{[x_0, \infty)}{x_0^k}{\mu(x)}
            = x_0^k * \mu([x_0, \infty)).
        \end{gather*}
        Next we want to pick $x_1 < x_0$ close enough to $x_0$ such that $P_1((x_1, x_0)) < \mu([x_0, \infty))$.
        We can represent the interval $[1, x_0)$ as $[1, x_0) = \bigcup_{n \in \N} [1, x_0 - \frac{1}{n}]$.
        By continuity from below of the probability measure $P_1$, we get $\lim\limits_{n \to \infty} P_1([1, x_0 - \frac{1}{n}]) = P_1([1, x_0))$.
        Therefore we find an $x_1 < x_0$ such that $P_1([1, x_1]) > P_1([1, x_0)) - \mu([x_0, \infty))$, i.e. $P_1((x_1, x_0)) < \mu([x_0, \infty))$.
        With this we estimate
        \begin{align*}
            &\lintegral{[1, x_0)}{x^k}{P_2(x)} - \lintegral{[1, x_0)}{x^k}{P_1(x)}
            \geq - \lintegral{[1, x_0)}{x^k}{P_1(x)} \\
            = - &\lintegral{[1, x_1]}{x^k}{P_1(x)} - \lintegral{(x_1, x_0)}{x^k}{P_1(x)}
            \geq - \lintegral{[1, x_1]}{x_1^k}{P_1(x)} - \lintegral{(x_1, x_0)}{x_0^k}{P_1(x)} \\
            =   -  & x_1^k * P_1([1, x_1]) - x_0^k * P_1((x_1, x_0)).
        \end{align*}
        In summary, we get
        \begin{multline*}
            m_k(P_2) - m_k(P_1)
            \geq x_0^k * \mu([x_0, \infty)) - x_0^k * P_1((x_1, x_0)) - x_1^k * P_1([1, x_1]) \\
            = x_0^k \bigpars{\undersetbrace{\mu([x_0, \infty)) - P_1((x_1, x_0))}{> 0}} - x_1^k * P_1([1, x_1]),
        \end{multline*}
        which is greater than zero for all $k$ large enough, because $x_1 < x_0$.
    \end{proof}
    
    Special cases of this theorem hold for absolutely continuous and discrete distributions. Recall for the next proof that we write $\lambda$ for the Lebesgue measure.
    
    \begin{thm}
        Let $P_1, P_2 \in \Dgeqone \cap \M$ be absolutely continuous, with densities $f, g$.
        \begin{enumerate}
            \item 
            If there is some $x_0 \geq 1$ such that on the interval $[x_0, \infty)$ we have $f \leq g$ almost everywhere, yet not $f = g$ almost everywhere,
            then $P_1 \lesstail P_2$.
            \label{item:acTailOrder-NonNullDominatingSetSuffCondition}
            
            \item 
            If there are $x_0 \geq 1, \delta > 0$ such that $f \leq g$ on $[x_0, \infty)$ and $f < g$ on $[x_0, x_0 + \delta]$, then $P_1 \lesstail P_2$.
            \label{item:acTailOrder-DominatingIntervalSuffCondition}
        \end{enumerate}
        \label{thm:acTailOrderSuffConditions}
    \end{thm}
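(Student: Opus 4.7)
The strategy is to reduce both parts to Theorem \ref{thm:tailOrderSufficientConditionsGeneral} by translating the pointwise (or almost-everywhere) density inequalities into the measure-theoretic inequalities \eqref{eq:p2DominatesP1LeftOfX0} and \eqref{eq:p2StrictlyDominatesP1OnB0}. Since $P_1, P_2$ are absolutely continuous with densities $f, g$, the key identity is $P_i(B) = \lintegral{B}{h_i}{\lambda}$ with $h_1 = f$, $h_2 = g$.

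For part \ref{item:acTailOrder-NonNullDominatingSetSuffCondition}, I would first establish \eqref{eq:p2DominatesP1LeftOfX0}: for any Borel set $B \subseteq [x_0, \infty)$, integrating the almost-everywhere inequality $f \leq g$ against $\1_B$ yields $P_1(B) = \lintegral{B}{f}{\lambda} \leq \lintegral{B}{g}{\lambda} = P_2(B)$, where I can disregard a Lebesgue-null exceptional set because both integrals ignore it. Next, for \eqref{eq:p2StrictlyDominatesP1OnB0}, the combined hypotheses ``$f \leq g$ a.e.\ on $[x_0, \infty)$'' and ``not $f = g$ a.e.\ on $[x_0, \infty)$'' imply that the Borel set $A \coloneqq \set{x \in [x_0, \infty) : f(x) < g(x)}$ has strictly positive Lebesgue measure. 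Since $g - f \geq 0$ almost everywhere on $[x_0, \infty)$, I can split the integral
\begin{gather*}
    P_2([x_0, \infty)) - P_1([x_0, \infty)) = \lintegral{[x_0, \infty)}{(g - f)}{\lambda} \geq \lintegral{A}{(g - f)}{\lambda},
\end{gather*}
and the standard measure-theoretic fact that the integral of a strictly positive measurable function over a set of positive measure is strictly positive yields $P_1([x_0, \infty)) < P_2([x_0, \infty))$. Both hypotheses of Theorem \ref{thm:tailOrderSufficientConditionsGeneral} are then satisfied, concluding $P_1 \lesstail P_2$.

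Part \ref{item:acTailOrder-DominatingIntervalSuffCondition} I would handle by showing it implies the hypotheses of part \ref{item:acTailOrder-NonNullDominatingSetSuffCondition}. The pointwise inequality $f \leq g$ on $[x_0, \infty)$ certainly holds a.e., and the strict pointwise inequality $f < g$ on $[x_0, x_0 + \delta]$ rules out $f = g$ a.e.\ on $[x_0, \infty)$, because $[x_0, x_0 + \delta]$ has Lebesgue measure $\delta > 0$. Thus part \ref{item:acTailOrder-NonNullDominatingSetSuffCondition} applies and gives $P_1 \lesstail P_2$.

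The only subtlety worth care is the measure-theoretic step showing that a non-negative integrand which is positive on a set of positive Lebesgue measure has a positive integral; I would cite this as a basic property of the Lebesgue integral rather than reprove it. Otherwise the argument is a direct translation via densities and an invocation of Theorem \ref{thm:tailOrderSufficientConditionsGeneral}.
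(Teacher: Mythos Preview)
Your proposal is correct and follows essentially the same approach as the paper: both parts are reduced to Theorem \ref{thm:tailOrderSufficientConditionsGeneral} by translating the density inequalities into the measure inequalities \eqref{eq:p2DominatesP1LeftOfX0} and \eqref{eq:p2StrictlyDominatesP1OnB0}, and part \ref{item:acTailOrder-DominatingIntervalSuffCondition} is derived as a special case of part \ref{item:acTailOrder-NonNullDominatingSetSuffCondition}. The only cosmetic difference is that the paper establishes the strict inequality by splitting $[x_0,\infty)$ into $\{f<g\}$ and $\{f=g\}$, whereas you bound $\lintegral{[x_0,\infty)}{(g-f)}{\lambda} \geq \lintegral{A}{(g-f)}{\lambda}$ directly; both are the same idea.
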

    \begin{proof}
        ~        
        \sloppypar{
        \ref{item:acTailOrder-NonNullDominatingSetSuffCondition}:
        Our assumptions imply that $\lambda(\set{f > g} \cap [x_0, \infty)) = 0$ and ${\lambda(\set{f < g} \cap [x_0, \infty))} > 0$.
        Let $B \in \B, B \subseteq [x_0, \infty)$. Then $f \leq g$ almost everywhere on $B$.
        So $P_1(B) = \lintegral{B}{f}{\lambda} \leq \lintegral{B}{g}{\lambda} = P_2(B)$.}
        On the other hand, 
        \begin{align*}
            P_1([x_0, \infty)) &=  \lintegral{[x_0, \infty) \cap \set{f < g}}{f}{\lambda} +  \lintegral{[x_0, \infty) \cap \set{f = g}}{f}{\lambda} \\
            &< \lintegral{[x_0, \infty) \cap \set{f < g}}{g}{\lambda} +  \lintegral{[x_0, \infty) \cap \set{f = g}}{g}{\lambda} = P_2(B).
        \end{align*}
        So the conditions for Theorem \ref{thm:tailOrderSufficientConditionsGeneral} are satisfied, and $P_1 \lesstail P_2$.
        
        \ref{item:acTailOrder-DominatingIntervalSuffCondition}:
        This is as a special case of the previous condition.
        We have that $\lambda(\set{f > g} \cap [x_0, \infty)) = \lambda(\emptyset) = 0$, and $\lambda(\set{f < g} \cap [x_0, \infty)) > \lambda([x_0, x_0 + \delta]) > 0$, so the conditions of the first part are satisfied and $P_1 \lesstail P_2$.
    \end{proof}

    \begin{thm}
        Let $P_1, P_2 \in \Dgeqone \cap \M$ be discrete, with probability mass functions $f, g$.
        \begin{enumerate}
            \item 
            If there is some $x_0 \geq 1$ such that $f(x_0) < g(x_0)$, and $\forall x > x_0: f(x) \leq g(x)$, then $P_1 \lesstail P_2$.
            \label{item:discreteTailOrder-DominatingPointSuffCondition}
            
            \item 
            If $P_1, P_2$ both have finite support, an equivalence holds:
            \begin{gather}
                P_1 \lesstail P_2 \lra \exists x_0 \geq 1: f(x_0) < g(x_0) \wedge \forall x > x_0: f(x_0) \leq g(x_0).
                \label{eq:discreteFiniteTailOrder-DominatingPointEquivalence}
            \end{gather}            
            \label{item:discreteFiniteTailOrder-DominatingPointEquivalence}
        \end{enumerate}        
    \label{thm:discreteTailOrderSuffConditions}
    \end{thm}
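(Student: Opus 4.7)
The plan is to prove part (1) by reducing it to the already-proved Theorem \ref{thm:tailOrderSufficientConditionsGeneral}, and then to deduce part (2) by combining part (1) with a ``largest disagreement point'' argument that exploits the finiteness of the support.

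For part (1), the strategy is to verify the two hypotheses \eqref{eq:p2DominatesP1LeftOfX0} and \eqref{eq:p2StrictlyDominatesP1OnB0} of Theorem \ref{thm:tailOrderSufficientConditionsGeneral} directly. Since $P_1, P_2$ are discrete, for any Borel $B \subseteq [x_0, \infty)$ we have $P_i(B) = \sum_{x \in B} f_i(x)$, where the sum has at most countably many nonzero terms. The pointwise inequality $f(x) \leq g(x)$ for all $x \geq x_0$ (which follows from the hypotheses, since $f(x_0) < g(x_0)$ and $f(x) \leq g(x)$ for $x > x_0$) gives $P_1(B) \leq P_2(B)$ term by term. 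Specializing to $B = [x_0, \infty)$ and isolating the contribution at $x = x_0$, where the inequality is strict by assumption, yields $P_1([x_0, \infty)) < P_2([x_0, \infty))$. Theorem \ref{thm:tailOrderSufficientConditionsGeneral} then delivers $P_1 \lesstail P_2$.

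For part (2), the direction ``$\Leftarrow$'' is immediate from part (1). For ``$\Rightarrow$'', set $S \coloneqq \supp(P_1) \cup \supp(P_2)$; by hypothesis $S$ is finite. Because $P_1 \lesstail P_2$ implies $P_1 \neq P_2$, the set $D \coloneqq \{x \in S : f(x) \neq g(x)\}$ is non-empty, and finiteness of $S$ lets me take $x^* \coloneqq \max D$. For $x > x^*$ with $x \in S$ we have $f(x) = g(x)$ by maximality, and for $x > x^*$ with $x \notin S$ both sides vanish, so $f(x) \leq g(x)$ for every $x > x^*$. It remains to show $f(x^*) < g(x^*)$; I would argue by contradiction. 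If instead $f(x^*) > g(x^*)$, then applying part (1) with the roles of $P_1$ and $P_2$ interchanged (witnessed by $x_0 = x^*$) would yield $P_2 \lesstail P_1$. Since $\lesstail$ is asymmetric by its definition $a < b \colonlra a \leq b \wedge \neg(a \geq b)$, this contradicts $P_1 \lesstail P_2$. Hence $f(x^*) < g(x^*)$, and $x_0 \coloneqq x^*$, which lies in $S \subseteq [1, \infty)$, is the desired witness.

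The main obstacle is really the ``$\Rightarrow$'' direction of part (2), and specifically the use of finite support: only finiteness of $S$ guarantees the existence of a maximum of $D$. If the support were merely countable, one could imagine distributions whose mass functions alternate $f < g$ and $f > g$ on arbitrarily large arguments, so no such largest disagreement point exists and the naive extension of the equivalence fails. The rest of the argument is bookkeeping, turning pointwise mass inequalities into measure inequalities via countable summation and invoking the already-established general sufficient condition.
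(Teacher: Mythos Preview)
Your proof is correct and follows essentially the same approach as the paper: both parts reduce to Theorem~\ref{thm:tailOrderSufficientConditionsGeneral} via pointwise mass comparison, and in part~(2) both you and the paper pick the largest point of disagreement in the finite common support and rule out $f(x^*) > g(x^*)$ by applying part~(1) with the roles of $P_1$ and $P_2$ swapped. The only cosmetic difference is that the paper phrases the discrete sums as integrals against the counting measure, whereas you write them as countable sums directly.
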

    \vspace{-1.2cm}
    \begin{proof}
        \ref{item:discreteTailOrder-DominatingPointSuffCondition}:
        We can view discrete distributions as absolutely continuous with respect to the counting measure $\#$, and e.g. write $P_1(B) = \lintegral{B}{f}{\#}$.
        Let $B \in \B, B \subseteq [x_0, \infty)$. Then analogously to the previous proof, we get
        $P_1(B) = \lintegral{B}{f}{\#} \leq \lintegral{B}{g}{\#} = P_2(B)$.
        Also, $P_1([x_0, \infty)) = f(x_0) + P_1((x_0, \infty)) \leq f(x_0) + P_2((x_0, \infty)) < g(x_0) + P_2((x_0, \infty)) = P_2([x_0, \infty))$.
        This shows that the conditions of Theorem \ref{thm:tailOrderSufficientConditionsGeneral} hold, and $P_1 \lesstail P_2$.
        
        \ref{item:discreteFiniteTailOrder-DominatingPointEquivalence}:
        We show that in the finite case the condition is not only sufficient, but also necessary.
        Suppose $P_1 \lesstail P_2$, let $v_1 < \dots < v_m$ denote the values in the common support.
        Let $n$ be the maximal index such that $f(v_n) \neq g(v_n)$: Such an $n$ must exist; else, $f(v_i) = g(v_i)$ for all $i$ would imply $P_1 = P_2$ in contradiction to our assumption.
        Since by the above sufficient condition, $f(v_n) > g(v_n)$ would imply $P_1 \greatertail P_2$, we must have $f(v_n) < g(v_n)$. So $x_0 \coloneqq v_n$ fulfills the desired property.
    \end{proof}

    In the special case of finite supports, a comparison by $\leqtail$ is equivalent to comparing the probability masses of the support points lexicographically from the right. The next definition defines such a lexicographic ordering $\leqRlex$ on vectors, and the subsequent corollary shows how comparisons by $\leqtail$ and $\leqRlex$ are equivalent.

    \begin{defn}
        Let $\leqRlex$ denote the \emph{reflected lexicographic order} on $\R^m$:
        \begin{gather*}
            (x_1, \dots, x_m) \leqRlex (y_1, \dots, y_m) \colonlra x = y \vee (\exists k \in [m]: x_k < y_k \wedge x_i = y_i ~\forall i > k).
        \end{gather*}
    \end{defn}
    $\leqRlex$ is similar to the usual lexicographic order, but starts comparing from the right instead of from the left \footnote{There seems to be no consistent name for this ordering in the literature; The term \emph{reflected lexicographic order} is used by \cite{bib:oeisRefLexOrder}.}. Just as the lexicographic order, it is antisymmetric and total. 
    
    \begin{cor}
        Let $P_1, P_2 \in \Dgeqone$ be discrete distributions with probability mass functions $f, g$ and finite common support $(\supp P_1 \cup \supp P_2) = \set{v_1, \dots, v_m}$, $v_1 < \dots < v_m$. Then
        \begin{gather}            
            P_1 \leqtail P_2 \lra v^f \coloneqq (f(v_1), \dots, f(v_m)) \leqRlex (g(v_1), \dots, g(v_m)) \eqqcolon v^g.
            \label{eq:discreteFiniteTailOrder-RLexEquivalence}
        \end{gather}
        \label{cor:discreteFiniteTailOrder-RLexEquivalence}
    \end{cor}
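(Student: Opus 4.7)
The plan is to reduce the claim to Theorem~\ref{thm:discreteTailOrderSuffConditions}(\ref{item:discreteFiniteTailOrder-DominatingPointEquivalence}), which already establishes essentially the same equivalence but phrased in terms of probability mass functions rather than coordinate vectors; the corollary is really just a translation between the two formulations.

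First I would dispose of the reflexive case: since $P_1$ and $P_2$ are both supported within $\{v_1, \dots, v_m\}$, they coincide iff their pmfs agree on this finite set, i.e.\ $v^f = v^g$. Both $\leqtail$ and $\leqRlex$ include their respective equality cases, so this part of the equivalence is immediate.

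For the strict case, I would show $P_1 \lesstail P_2 \lra v^f \lessRlex v^g$ by translating between the two characterizations. If $v^f \lessRlex v^g$, there is a largest index $n$ with $f(v_n) \neq g(v_n)$, at which $f(v_n) < g(v_n)$, while above $n$ the pmfs agree, so in particular $f(v_i) \leq g(v_i)$ for $i > n$; taking $x_0 = v_n$, the sufficient direction of Theorem~\ref{thm:discreteTailOrderSuffConditions}(\ref{item:discreteFiniteTailOrder-DominatingPointEquivalence}) yields $P_1 \lesstail P_2$. Conversely, if $P_1 \lesstail P_2$, I would pick $n$ to be the maximal index in $[m]$ with $f(v_n) \neq g(v_n)$; such an $n$ exists since $P_1 \neq P_2$. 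By the necessary direction of the theorem applied with the roles of $P_1, P_2$ swapped, the case $f(v_n) > g(v_n)$ would give $P_2 \lesstail P_1$ and hence contradict $P_1 \lesstail P_2$. Therefore $f(v_n) < g(v_n)$, and by maximality $f(v_i) = g(v_i)$ for all $i > n$, which is precisely $v^f \lessRlex v^g$.

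The main point of care -- and hardly an obstacle -- is that the theorem's characterization uses a weak inequality $f \leq g$ above the crossing point $x_0$, while $\lessRlex$ requires strict equality above the last differing coordinate. The fix is simply to always choose $x_0 = v_n$ where $n$ is the \emph{largest} index at which the pmfs differ; maximality then forces equality strictly above $n$, and the strict inequality at $n$ must go in the direction $f(v_n) < g(v_n)$ by the argument above.
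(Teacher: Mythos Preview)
Your proposal is correct and follows essentially the same approach as the paper: reduce to Theorem~\ref{thm:discreteTailOrderSuffConditions}(\ref{item:discreteFiniteTailOrder-DominatingPointEquivalence}), handle the equality case separately, and for the strict case use totality (if the last differing pmf value went the wrong way, swapping roles would give $P_2 \lesstail P_1$). One small wording slip: in your converse argument you invoke ``the necessary direction'' of the theorem, but what you actually use is the \emph{sufficient} direction (from the pmf condition to $P_2 \lesstail P_1$); the logic itself is fine.
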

    \vspace{-1.2cm}
    \begin{proof}
        From Theorem \ref{thm:discreteTailOrderSuffConditions}, \ref{item:discreteFiniteTailOrder-DominatingPointEquivalence}, it follows directly that $P_1 \lesstail P_2 \lra (\exists k \in [m]: f(v_k) < g(v_k) \wedge f(v_i) = g(v_i) ~\forall i > k)$.
        Since $\leqRlex$ is antisymmetric, by the definition of $\leqRlex$ this is equivalent to $v^f \lessRlex v^g$.
        If instead $P_1 \leqtail P_2$, but not $P_1 \lesstail P_2$, this is equivalent to $P_1 = P_2$: This is because if $P_1 \neq P_2$, the sufficient condition of Theorem \ref{thm:discreteTailOrderSuffConditions} implies either $P_1 \lesstail P_2$ or $P_1 \greatertail P_2$. $P_1 = P_2$ then is equivalent to $v^f = v^g$, so in summary, $P_1 \leqtail P_2 \lra v^f \leqRlex v^g$.
    \end{proof}    

    In particular, this characterization shows that $\leqtail$ is antisymmetric and total on the set of distributions with finite support.
    
    \subsubsection{Counterexamples to Equivalences in the Sufficient Conditions for the Tail Order}
    \newcommand{\DRass}{\tilde{\DP}}
    While finite support allows us to turn the sufficient condition for the tail order into an equivalent condition, this does not work in the general case of distributions in $\Dgeqone \cap \M$.
    To show that none of the sufficient conditions from Theorems \ref{thm:acTailOrderSuffConditions} and \ref{thm:discreteTailOrderSuffConditions},  \ref{item:discreteTailOrder-DominatingPointSuffCondition} can be turned into equivalences, we construct several counterexamples.
    The original motivation of these counterexamples was to disprove a statement made in \cite{bib:rassGameRiskManagI} (and reproduced in \cite{bib:rassTotalOrderingOnLossDistributions}): 
    An old version of \cite{bib:rassGameRiskManagI} contained a condition similar to Theorem \ref{thm:acTailOrderSuffConditions}, and claimed that it was equivalent, not only sufficient, for a $\leqtail$-relationship between two distributions. The proof wrongly assumed that of two density functions, one always dominates the other from some point on until the end of the support.
    An updated version was meanwhile uploaded to arXiv that clarifies this issue. Since the paper only concerns discrete and absolutely continuous distributions with bounded support, and in the absolutely continuous case with continuous density functions, we construct counterexamples with these properties: Define 
    \begin{gather*}
        \DRass \coloneqq \set{P \in \DP: \text{$\exists b: P \in \DP_{[1, b]}$, $P$ discrete or AC with continuous density function}}.
    \end{gather*}
    
    When we introduced the tail order, we motivated it as a replacement for the usual stochastic order $\leqst$ that works on a wider range of distributions.
    However, the reader might have noticed that the definition of $\leqst$ used distribution function dominance (see Example \ref{ex:stochasticOrdersLeqELeqst}), yet all our sufficient conditions use dominance of the mass or density functions.
    It is not clear whether a sufficient condition in the form of 
    \begin{gather*}
        F_1(x) \geq F_2(x) ~ \forall x \geq x_0, \lambda({F_1 > F_2} \cap [x_0, \infty)) > 0 \implies P_1 \lesstail P_2
    \end{gather*}
    holds, either on all of $(\Dgeqone \cap M)$ or on the subset of distributions with bounded support.
    We will not further investigate this issue, but it might be an interesting question for follow-up work.
    However, we will also show with our counterexamples that even if such a condition was sufficient, it could not be turned into an equivalence.
    \cite[Theorem 2.15]{bib:rassGameRiskManagI} shows that the reverse direction holds on $\DRass$ under the assumption that one density eventually dominates, but we show that it does not hold in general.
    
    The specific statements we disprove in our counterexamples are as follows, where we assume that $P_1, P_2 \in \DRass$, $P_1 \neq P_2$ and $P_1, P_2$ have probability mass functions or probability density functions $f, g$, and distribution functions $F, G$, respectively.
    \makeatletter
    \newcounter{labelleditemcount}
    \newcommand{\labelleditem}[1][]{%
        \refstepcounter{labelleditemcount}%
        \item[#1]\protected@edef\@currentlabel{#1}%
    }
    \makeatother
    \begin{enumerate}
        \labelleditem[(1a)]
        “\textit{If $P_1, P_2$ are discrete, then there exists an $x_0$ such that either $\forall x \geq x_0: f(x) \geq g(x)$, or $\forall x \geq x_0: f(x) \leq g(x)$.}”
        \label{item:counterExampleClaim1-fg}
        
        \labelleditem[(1b)]
        “\textit{If $P_1, P_2$ are discrete, then there exists an $x_0$ such that either $\forall x \geq x_0: F(x) \leq G(x)$, or $\forall x \geq x_0: F(x) \geq G(x)$.}”
        \label{item:counterExampleClaim1-FG}
        
        \labelleditem[(2a)]
        “\textit{If $P_1, P_2$ are discrete and $P_1 \lesstail P_2$, then $\exists x_0: \forall x \geq x_0: f(x) \leq g(x)$.}”
        \label{item:counterExampleClaim2-fg}
        
        \labelleditem[(2b)]
        “\textit{If $P_1, P_2$ are discrete and $P_1 \lesstail P_2$, then $\exists x_0: \forall x \geq x_0: F(x) \geq G(x)$.}”
        \label{item:counterExampleClaim2-FG}
        
        \labelleditem[(3a)]
        “\textit{If $P_1, P_2$ are AC and $P_1 \lesstail P_2$, then $\exists x_0: \forall x \geq x_0: f(x) \leq g(x)$}.”
        \label{item:counterExampleClaim3-fg}
        
        \labelleditem[(3b)]
        “\textit{If $P_1, P_2$ are AC and $P_1 \lesstail P_2$, then $\exists x_0: \forall x \geq x_0: F(x) \geq G(x)$}.”
        \label{item:counterExampleClaim3-FG}
    \end{enumerate}

    The first counterexample disproves the statements \ref{item:counterExampleClaim1-fg} and \ref{item:counterExampleClaim1-FG}, the second one disproves the statements \ref{item:counterExampleClaim2-fg} and \ref{item:counterExampleClaim2-FG}, and the third one disproves the statements \ref{item:counterExampleClaim3-fg} and \ref{item:counterExampleClaim3-FG}.
    
    \begin{ex}
        \textbf{\textsl{An infinite family of discrete distributions with pairwise alternating mass and distribution functions and bounded support}.}
        
        \label{ex:geometricSeriesSufficientTailOrderConditionCounterexample}
        This first example disproves claims \ref{item:counterExampleClaim1-fg} and \ref{item:counterExampleClaim1-FG}. It is not strictly necessary since the next counter example is more general and even disproves the weaker claims \ref{item:counterExampleClaim2-fg} and \ref{item:counterExampleClaim2-FG}.
        In this example, on the other hand, the mass and density functions of two distributions are shown to alternate, but it could not be shown that one moment sequence dominates the other.
        The example is included nonetheless because it involves an interesting graphical proof of the fact that the distribution functions alternate, and the distributions involved are less artificial than in the other examples.
        \footnote{In particular, $P_c$ from the example can be obtained as follows: If $X\sim \text{Geom}(p)$ is a random variable that is geometrically distributed on $\N$, then $2 - (1-p)^X \sim P_{1/(1-p)}$.}
        
        For $c > 1$, let $P_c$ be defined by $P_c\biggpars{\biggset{2-\bigpars{\frac{1}{c}}^k}} = (c-1)\bigpars{\frac{1}{c}}^k$ for all $k \in \N$, and zero everywhere else.
        These probability masses are constructed to sum to $1$ by the geometric series.
        We let $a_k \coloneqq \bigpars{\frac{1}{c}}^k$, and will later use that $\sum_{j = 1}^k  a_j = \frac{1 - \pars{\frac{1}{c}}^k}{c - 1} = \frac{1 - a_k}{c - 1}$.
        The distribution function  $F_c$ of $P_c$ can be written as a sum of indicator functions as
        \begin{align*}
            F_c 
            = \sum_{k \geq 1} (c-1) a_k * \1_{[2-a_k, \infty)}
            &= \biggpars{\sum_{k \geq 1} (c-1) \biggpars{\sum_{j = 1}^k  a_j} * \1_{[2-a_k, 2-a_{k+1})}} + \1_{[2, \infty)} \\
            &= \biggpars{\sum_{k \geq 1} (1-a_k) * \1_{[2-a_k, 2-a_{k+1})}} + \1_{[2, \infty)}.
        \end{align*}
        In particular, on the interval (1, 2), $F_c$ is bounded from above by the line $x \mapsto x-1$, touching it exactly at its discontinuity points, i.e. $F_c(2-a_k) = 1-a_k$. 
        This lets us prove graphically that distribution functions of such distributions can alternate:
        If we choose $c, d$ such that the sets $\set{2 - \frac{1}{c^k} \mid k \in \N}, \set{2 - \frac{1}{d^k} \mid k \in \N}$ are disjoint,
        then $F_c, F_d$ alternate as seen in \autoref{fig:distributionsGeometricSeriesCounterexample-2-3}.
        This is because at every discontinuity, one distribution function overtakes the other since it jumps to the diagonal while the other function is strictly smaller.
        In particular, $\set{P_p \mid p \text{ prime}}$ is an infinite family of distributions with pairwise alternating distribution functions.
        It is clear that the corresponding mass functions $f, g$ alternate as well, because every discontinuity of $F$ or $G$ corresponds to a non-zero probability mass in one distribution, while the other distribution assigns zero mass to that point.
        
        The $n$-th moment of $P_c$ can be expressed as a sum with finitely many terms:
        \begin{align*}
            m_n(P_c) 
            &= \sum_{k \geq 1} (c-1) \frac{1}{c^k} (2-\frac{1}{c^k})^n
            = \sum_{k \geq 1} (c-1) \frac{(2c^k-1)^n}{c^{kn+k}} \\
            &= (c-1)\sum_{j=0}^n \binom{n}{j}2^{n-j}(-1)^j \biggpars{\sum_{k \geq 1}\frac{c^{-jk}}{c^k}}
            = (c-1)\sum_{j=0}^n \binom{n}{j}2^{n-j}(-1)^j \frac{1}{c^{j+1}-1}.
        \end{align*}
        To also disprove the claims \ref{item:counterExampleClaim2-fg} and \ref{item:counterExampleClaim2-FG} with this example, we would have to show $P_c \lesstail P_d$ for all $1 < c < d$.
        While computer calculations suggest that $m_n(P_c)$ increases monotonically with $c$, it seems hard to show this rigorously.
        We therefore leave the question open whether the construction from this example can also be used to disprove the statements \ref{item:counterExampleClaim2-fg} and \ref{item:counterExampleClaim2-FG}.
        \qed
    \end{ex}
    
    \begin{figure}
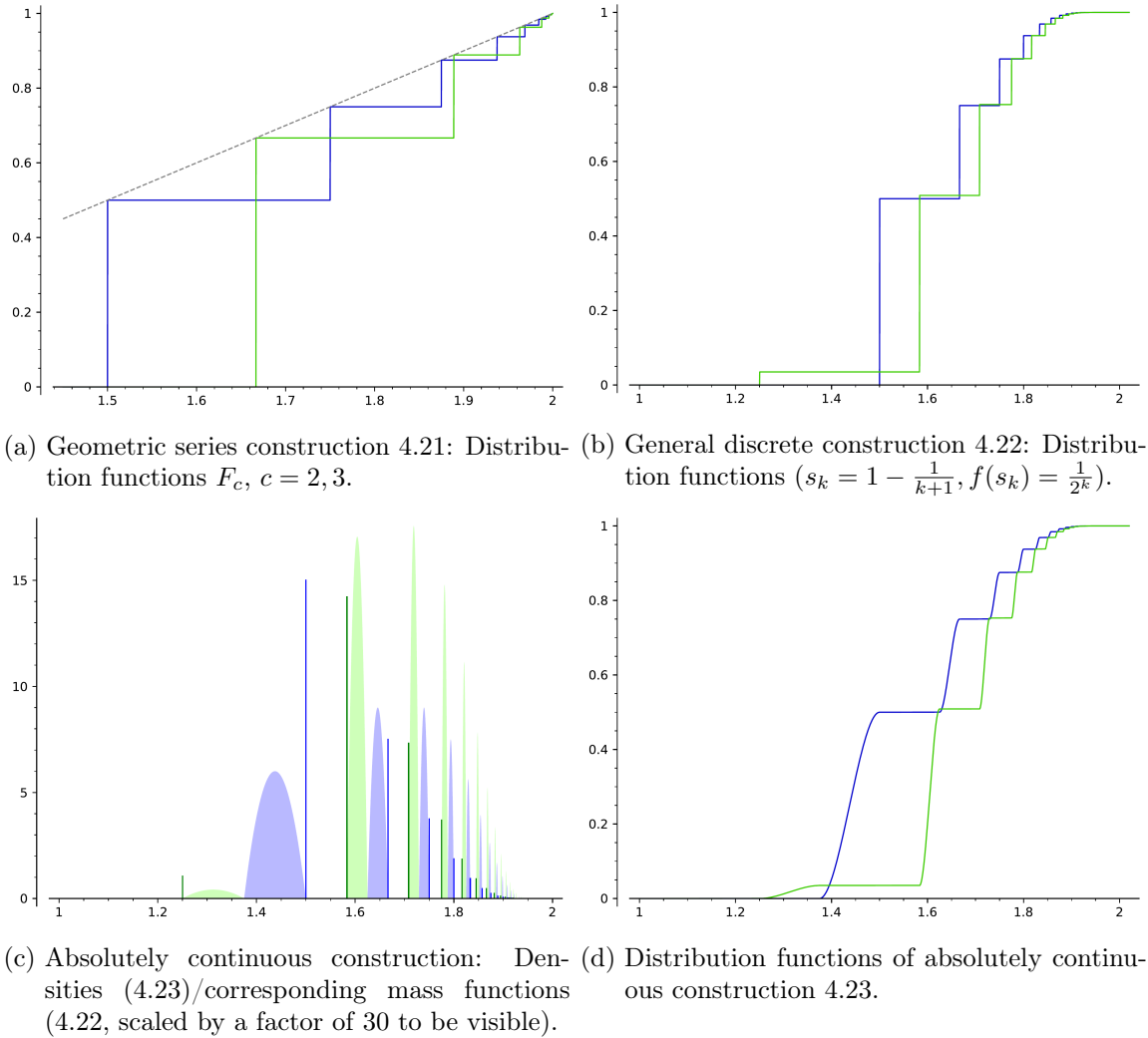

        \centering
        \begin{subfigure}[b]{0.49\textwidth}
            \includegraphics[width=\textwidth]{Pictures/geometric-series-distribution-2-3-half.png}
            \caption{Geometric series construction \ref{ex:geometricSeriesSufficientTailOrderConditionCounterexample}: Distribution functions $F_c$, $c=2, 3$.}
            \label{fig:distributionsGeometricSeriesCounterexample-2-3}
        \end{subfigure}
        \begin{subfigure}[b]{0.49\textwidth}
            \includegraphics[width=\textwidth]{Pictures/little-shifts-distribution.png}
            \caption{General discrete construction \ref{ex:discreteSufficientTailOrderConditionCounterexample}: Distribution functions ($s_k = 1-\frac{1}{k+1}, f(s_k) = \frac{1}{2^k}$).}
            \label{fig:littleShiftsCounterexample-distribution}
        \end{subfigure}
        
        \vspace*{0.01\textwidth}
        \begin{subfigure}[t]{0.49\textwidth}
            \includegraphics[width=\textwidth]{Pictures/ac-densities-masses.png}
            \caption{Absolutely continuous construction: Densities (\ref{ex:acSufficientTailOrderConditionCounterexample})/corresponding mass functions (\ref{ex:discreteSufficientTailOrderConditionCounterexample}, scaled by a factor of 30 to be visible).}
            \label{fig:acCounterexample-masses-densities}
        \end{subfigure}
        \begin{subfigure}[t]{0.49\textwidth}
            \includegraphics[width=\textwidth]{Pictures/ac-distribution.png}
            \caption{Distribution functions of absolutely continuous construction \ref{ex:acSufficientTailOrderConditionCounterexample}.}
            \label{fig:acCounterexample-distribution}
        \end{subfigure}
        
        \label{fig:sufficientTailOrderConditionsCounterexamples}
        \caption{Plots for the counterexamples \ref{ex:geometricSeriesSufficientTailOrderConditionCounterexample} - \ref{ex:acSufficientTailOrderConditionCounterexample}}
    \end{figure}

    \begin{ex}
        \textbf{
        \textsl{A $\leqtail$-ascending sequence of discrete distributions with alternating mass and distribution functions for consecutive elements and bounded support.}
        }
        
        \label{ex:discreteSufficientTailOrderConditionCounterexample}
        In this more general counterexample, we start with a discrete distribution $P$ whose support can be written as $\supp P = \set{s_i \mid i \in \N} \subseteq [a, b)$, $1 < a < b$, with the $s_i$ in increasing order, and whose probability mass function $f$ is strictly decreasing on the support, i.e. $\forall i < j: f(s_i) > f(s_j)$. We construct $\tilde{P}$ with mass function $g$, where each support point is shifted slightly to the right, with its probability adjusted to be only a little smaller, giving the remaining probability mass to the point $t_0 \coloneqq \frac{1+a}{2}$.
        First, set $t_i \coloneqq \frac{s_i+ s_{i+1}}{2}, i \in \N$.
        We adjust the probability of $t_i$ by a factor $c_i < 1$, i.e. $g(t_i) = c_i f(s_i)$, chosen such that the following properties are satisfied:
        \begin{enumerate}
            \item $c_i \geq \frac{s_i}{t_i}$,
            \item $c_i \geq 1-\frac{1}{2^i}$,
            \item $c_i \geq \frac{f(s_{i+1})}{f(s_i)}$.
        \end{enumerate}
        As we will show shortly, the first property ensures that $\tilde{P}$ has greater moments than $P$, the second property ensures that the distribution functions alternate, and the third property ensures that $g$ is monotonic on its support as well (so the process can again be applied to $\tilde{P}$).
        In summary, we define $g$ as follows:
        
        \begin{gather*}
            g(t_i) = c_i f(s_i), \quad
            c_i \coloneqq \max\biggset{\frac{s_i}{t_i}, 1-\frac{1}{2^i}, \frac{f(s_{i+1})}{f(s_i)}} < 1, \quad
            g(t_0) = \sum_{i \geq 1} (1-c_i) f(s_i).
        \end{gather*}
        Obviously the mass functions $f, g$ alternate, as $f(s_i) > 0, g(s_i) = 0$ and $g(t_i) > 0, f(t_i) = 0$ for all $i \in \N$. We now show that the moments of $\tilde{P}$ dominate the moments of $P$, and the distribution functions alternate.
        The first property of the $c_i$ implies that $\forall i \in \N: g(t_i)t_i  \geq  f(s_i)s_i$, so for the moments we have:
        \begin{multline*}
            m_n(\tilde{P}) - m_n(P)
            = g(t_0)t_0^n + \sum_{i \geq 1} g(t_i) t_i^n - f(s_i) s_i^n
            \geq g(t_0)t_0^n + \sum_{i \geq 1} g(t_i) t_i s_i^{n-1} - f(s_i) s_i^n  \\
            = g(t_0)t_0^n + \sum_{i \geq 1} s_i^{n-1} \undersetbrace{(g(t_i)t_i - f(s_i)s_i)}{\geq 0}
            \geq g(t_0)t_0^n > 0.
        \end{multline*}
        
        Secondly, the distribution functions alternate, see \autoref{fig:littleShiftsCounterexample-distribution}: On the one hand for $k \in \N$, 
        \begin{multline*}
            G(t_k) 
            = g(t_0) + \sum_{i=1}^k c_i f(s_i) 
            = g(t_0) - \biggpars{\sum_{i=1}^k (1-c_i)f(s_i)} + F(s_k) \\
            = \biggpars{\sum_{i \geq k+1} (1-c_i)f(s_i)} + F(s_k) > F(s_k) = F(t_k).
        \end{multline*}
        On the other hand, 
        \begin{multline*}
            G(s_k)
            = g(t_0) + \sum_{i=1}^{k-1} c_i f(s_i) 
            = g(t_0) + \sum_{i=1}^{k-1} f(s_i) - \sum_{i=1}^{k-1} (1-c_i) f(s_i) \\
            = F(s_k) - f(s_k) + g(t_0) - \sum_{i=1}^{k-1} (1-c_i) f(s_i)  
            = F(s_k) - f(s_k) + \sum_{i \geq k} (1-c_i) f(s_i).
        \end{multline*}
        Using the second property of $c_i$, the strict monotonicity of $f$ on its support, and the geometric sum identity $\sum_{i \geq k} \frac{1}{2^i} = 
        \frac{1}{2^{k-1}}$, we further get:
        \begin{gather*}
            \sum_{i \geq k} (1-c_i)f(s_i) \leq \sum_{i \geq k} \frac{1}{2^i} f(s_i) < \sum_{i \geq k} \frac{1}{2^i} f(s_k) = \frac{1}{2^{k-1}} f(s_k) \leq f(s_k).
        \end{gather*}
        
        This shows that $G(s_k) < F(s_k)$. 
        So in summary we have $P \lesstail \tilde{P}$, and their probability mass, as well as distribution functions alternate.
        Furthermore, $\tilde{P}$ again satisfies the conditions we originally made on $P$: By repeating the process, we get a whole sequence of distributions which is ascending with respect to $\lesstail$ and for any two consecutive distributions in it, the mass and distribution functions alternate (so in particular, the sufficient condition of Theorem \ref{thm:acTailOrderSuffConditions} is not satisfied).
        \qed
    \end{ex}
    \newpage
    \begin{ex}
        \textbf{
            \textsl{Two $\leqtail$-comparable absolutely continuous distributions with alternating density and distribution functions, continuous densities and bounded support.}
        }
        
        \label{ex:acSufficientTailOrderConditionCounterexample}
        For the absolutely continuous counterexample, we start with two discrete distributions $P_1, P_2$ where $P_2$ is obtained from $P_1$ by the process from Example \ref{ex:discreteSufficientTailOrderConditionCounterexample}: In particular, we require that $P_1 \lesstail P_2$, $\supp(P_1) = \set{s_1, s_2, \dots}$, $\supp(P_2) = \set{t_0, t_1, t_2, \dots}$ with $1 < t_0 < s_1 < t_1 < s_2 < t_2 < \dots < b$, and that their distribution functions $F, G$ alternate with $\forall k \geq 1: F(s_k) > G(s_k), F(t_k) < G(t_k)$.
        
        We then shift the probability mass $P_1$ gives to each point $s_k$ to an interval to the left of $s_k$, and the probability mass $P_2$ gives to each point $t_k$ to an interval to the right of $t_k$, which preserves the order of the moments (illustrated in \autoref{fig:acCounterexample-masses-densities}). We do it in such a way that the distribution functions are again alternating, and the new density functions are continuous (\autoref{fig:acCounterexample-distribution}). We will call the new absolutely continuous distributions $\tilde{P}_1, \tilde{P}_2$, their density functions $\tilde{f}, \tilde{g}$, and their distribution functions $\tilde{F}, \tilde{G}$.
        
        To formalize this, let $h: \R \to \R, x \mapsto (6x - 6x^2)\1_{[0, 1]}(x)$. The function $h$ is a probability density function ($\lintegral{\R}{h}{\lambda} = 1$) supported on $[0, 1]$, which is continuous since $h(0) = h(1) = 0$.
        Denote by $h_{[a, b]}: x \mapsto \frac{1}{b-a}h\bigpars{\frac{x-a}{b-a}}$ the version of $h$ scaled to the interval $[a, b]$ in a way such that it still integrates to 1.
        Also, let $m_{k} = \frac{t_{k-1} + s_k}{2}$.
        Using this notation, we define $\tilde{f}, \tilde{g}$ by:
        \begin{gather}
            \tilde{f} = \sum_{k \geq 1}  f(s_k) h_{[m_k, s_k]}, \quad
            \tilde{g} = \sum_{k \geq 0}  g(t_k) h_{[t_k, m_{k+1}]}.
            \label{eq:acSufficientTailOrderConditionCounterexample-densitiesDefinition}
        \end{gather}
        By construction, it is clear that $\lintegral{\R}{\tilde{f}}{\lambda} = \lintegral{\R}{\tilde{g}}{\lambda} = 1$, since both of the sequences $(f(s_k))_{k \geq 1}, g(t_k))_{k \geq 0}$ sum to $1$. The distribution functions alternate:
        For $k \geq 1$,
        \begin{gather*}
            \tilde{F}(s_k) 
            = \lintegral{[1, s_k]}{\tilde{f}}{\lambda} 
            = \sum_{i=1}^{k} f(s_k) 
            = F(s_k)
            > G(s_k)
            = \sum_{i=1}^{k-1} g(t_k) 
            = \tilde{G}(s_k)
            ,\\                
            \tilde{G}(m_{k+1})                
            = \lintegral{[1, m_{k+1}]}{\tilde{g}}{\lambda} 
            = \sum_{i=1}^{k} g(t_k)
            = G(t_k)
            > F(t_k)
            = \sum_{i=1}^{k} f(s_k) 
            = \tilde{F}(m_{k+1}).
        \end{gather*}
        The ordering of the moments is preserved:
        \begin{multline*}
            m_n(\tilde{P_1})
            = \sum_{k \geq 1} \lintegral{[m_k, s_k]}{x^n h_{[m_k, s_k]}(x)}{\lambda(x)}
            < \sum_{k \geq 1} f(s_k)*s_k^n \\
            = m_n(P_1)
            < m_n(P_2) \\
            = \sum_{k \geq 0} g(t_k)*t_k^n
            < \sum_{k \geq 0} \lintegral{[t_k, m_{k+1}]}{x^n h_{[t_k, m_{k+1}]}(x)}{\lambda(x)}
            = m_n(\tilde{P_2}).
        \end{multline*}
        If the series in \eqref{eq:acSufficientTailOrderConditionCounterexample-densitiesDefinition} converge uniformly, continuity is preserved.
        For this we additionally require $\norm{f(s_k) h_{[m_k, s_k]}}_\infty = \frac{f(s_k)}{s_k - m_k} \toinfty{k} 0$, and similarly for $g$, which is the case if the probability masses $f(s_k)$ approach zero asymptotically faster than the consecutive differences of the $s_k$. For example, we can set $s_k = 2-\frac{1}{k+1}, f(s_k) = \frac{1}{2^k}$, and use $g, (t_k)_k$ constructed from it as in Example \ref{ex:discreteSufficientTailOrderConditionCounterexample}.
        \qed
    \end{ex}
    
    \subsection{Can the Tail Order Be Made a Total Order?}
    \label{subsec:tailOrderTotality}
    When we introduced the tail order in Section \ref{subsec:tailOrderDefinitionAndBasicProperties}, we shortly discussed its antisymmetry and totality properties: In short, $\leqtail$ is neither total nor antisymmetric on $M$. For antisymmetry, counterexamples of different distributions with equal moment sequences exist. Different distributions with equal moment sequences necessarily have unbounded support, and we do not know whether $\leqtail$ is antisymmetric on the subset of distributions with bounded support because it is not clear if the moment sequences of two different such distributions can disagree only finitely often.
    We can ask a similar question about the totality of $\leqtail$: While $\leqtail$ is not total on all of $M$,
    a natural question is if there is some useful subset of $\DP$ where $\leqtail$ is total, and we will discuss this question on the following pages.
    Corollary \ref{cor:discreteFiniteTailOrder-RLexEquivalence} shows that $\leqtail$ is total if only distributions with finite support are considered, yet limiting ourselves to finitely-supported distributions is quite restrictive.
    Since our only example of incomparable distributions so far relied on negative values in the support, one might hope that $\leqtail$ is total on $\DP_{\geq 0} \cap M$. However, we will show shortly that this is not the case.
    
    In \cite{bib:rassGameRiskManagI,bib:rassTotalOrderingOnLossDistributions}, the discussion focuses on the particular subset of distributions that have \emph{bounded support in $[1, \infty)$} and are either \emph{discrete with finite support}, or \emph{absolutely continuous with a continuous density function}.
    The lemmas \cite[Lemma 2]{bib:rassTotalOrderingOnLossDistributions} and \cite[Lemma 2.4]{bib:rassGameRiskManagI} wrongly claim that such distributions are always $\leqtail$-comparable: A proof is given for the absolutely continuous case; but as already discussed to motivate the previous counterexamples, it contains an error, as it implicitly assumes that of two density functions, one always dominates the other from some point on. This statement was disproved by the counterexamples \ref{ex:geometricSeriesSufficientTailOrderConditionCounterexample} - \ref{ex:acSufficientTailOrderConditionCounterexample}. Note that the latter paper was meanwhile updated to correct the error.
    Since the proof is erroneous, it is an interesting question whether the totality of $\leqtail$ on the set of distributions with bounded support in $[1, \infty)$ can be shown in another way, since it would be desirable for the application of $\leqtail$ to distribution-valued games if such a theorem could be proven.
    Unfortunately, it turns out that $\leqtail$ is not total on that set, and it is also not total if only absolutely continuous distributions are considered.
    It was conjectured during most the writing process of this thesis that this totality statement does hold, and the steps taken towards the desired proof are included on the following pages. However towards the end of the writing process, a counterexample was constructed by Jeremias Epperlein, and we will use this example at the end of this subsection to show that $\leqtail$ is not total on the set of distributions with bounded support in $[1, \infty)$.
    
    \subsubsection{The Moment Problem and its Variants}
    Since our questions depend on moment sequences by the definition $\leqtail$, it will be helpful to know about the properties of such sequences.
    The question to find out if a given sequence $(m_n)_{n \in \N_0}$ is a moment sequence for some Borel measure $\mu$ on $\R$ is known as the \emph{moment problem}, and there are several variants studied in the literature:
    The \emph{Hamburger moment problem} concerns measures supported on a subset of $\R$, the \emph{Stieltjes moment problem} is about measures supported on a subset of $[0, \infty)$,
    and the \emph{Hausdorff moment problem} deals with measures supported on a subset of $[0, 1]$. For all three versions, conditions are known that are both sufficient and necessary for $(m_n)_{n \in \N_0}$ to be a moment sequence of the respective kind.
    The moment problem has been extensively analyzed in the literature, for example in \cite{bib:shohatTheProblemOfMoments}, \cite{bib:akhiezerClassicalMomentProblem}, or the more recent \cite{bib:schmuedgenTheMomentProblem}.

    \subsubsection{Non-Totality in the Stieltjes Case}
    The Hamburger moment problem is too general for our case, since negative values in the support can lead to alternating moment sequences, making the tail order non-total.
    However, we can make use of a result relating Stieltjes and Hamburger moment sequences:
    A moment sequence is called \emph{Hamburger-}/\emph{Stieltjes-determinate} if there is a \emph{unique} measure of the respective type with that moment sequence \cite[p.68]{bib:schmuedgenTheMomentProblem}
    \footnote{More specifically, a unique \emph{Radon measure}, see \cite[A.1]{bib:schmuedgenTheMomentProblem} for a definition. This makes no difference in our case: Every (locally) finite Borel measure on $\R$, and therefore every probability measure in $\DP$, is a Radon measure (e.g. \cite[Proposition II.3.1]{bib:malliavinIntegrationAndProbability}). Also, a moment sequence belongs to a probability measure if and only if $m_0 = 1$.}.
    The result we will use is that Stieltjes-determinateness in general does not imply Hamburger-determinateness:
    
    \begin{thm}[See {\cite[Fact A]{bib:linMomentProblemRecentDevelopments}}, citing {\cite[p.240]{bib:akhiezerClassicalMomentProblem}} and \cite{bib:chiharaIndeterminateHamburgerMomentProblems}]
        There exists a moment sequence that is Stieltjes-determinate, but not Hamburger-determinate.
    \end{thm}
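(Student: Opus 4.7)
The plan is to exhibit an explicit example. Concretely, I would construct a probability measure $\mu$ supported on $[0, \infty)$ that is the unique representing measure on $[0, \infty)$ for its moment sequence $(m_n)_{n \geq 0}$, together with a different probability measure $\nu$ on $\R$ (necessarily satisfying $\nu((-\infty, 0)) > 0$) sharing all those moments. Producing such a pair $(\mu, \nu)$ immediately witnesses a Stieltjes-determinate, Hamburger-indeterminate moment sequence.

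The natural framework is the theory of orthogonal polynomials and Jacobi matrices: to any Hamburger moment sequence one associates monic orthogonal polynomials $(P_n)$ obeying a three-term recurrence $P_{n+1}(x) = (x - b_n) P_n(x) - a_n^2 P_{n-1}(x)$, with an associated Jacobi matrix $J$ having diagonal $(b_n)$ and off-diagonal $(a_n)$. Hamburger-determinacy is equivalent to essential self-adjointness of $J$ on $\ell^2(\N_0)$, while Stieltjes-determinacy additionally depends on a positivity criterion at $0$ involving both $P_n(0)$ and the second-kind polynomials $Q_n(0)$ (see \cite{bib:akhiezerClassicalMomentProblem}). The construction I would follow, drawn from \cite{bib:chiharaIndeterminateHamburgerMomentProblems}, calibrates the recurrence coefficients so that $\sum_n 1/a_n < \infty$ (opening the door to Hamburger-indeterminacy via failure of a standard self-adjointness test) while $(b_n)$ and the signs of $P_n(0)$ are arranged to force the Stieltjes problem to be determinate. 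Given such a calibration, the Nevanlinna parametrization yields an explicit one-parameter family of Hamburger solutions, and the Stieltjes-unique solution $\mu$ is recovered as the distinguished member with nonnegative support; any other member of the family is the required $\nu$.

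The main obstacle is the tension between the two conditions: most straightforward examples are either determinate in both senses (for instance, measures with bounded support, or moments growing slowly enough that Carleman's condition $\sum m_{2n}^{-1/(2n)} = \infty$ holds) or indeterminate in both (for instance, log-normal-type distributions with rapidly growing moments, which fail Stieltjes-determinacy already). One must calibrate the recurrence delicately so that the Nevanlinna parametrization gives a nontrivial family of representing measures on $\R$, yet restricting to $[0, \infty)$-supported measures singles out exactly one element. Once such a calibrated example from the cited literature is in hand, verifying both properties reduces to estimates on the sequences $(P_n(0))$ and $(Q_n(0))$, whence the theorem follows.
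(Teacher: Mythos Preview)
The paper does not prove this theorem at all: it is stated as a known fact with references to \cite{bib:linMomentProblemRecentDevelopments}, \cite{bib:akhiezerClassicalMomentProblem}, and \cite{bib:chiharaIndeterminateHamburgerMomentProblems}, and is then used as a black box to derive the subsequent corollary about measures $P_1 \in \Dgeqzero$, $P_2 \notin \Dgeqzero$ with identical moment sequences. So there is no ``paper's own proof'' to compare against; your proposal goes well beyond what the thesis attempts.

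As a proof sketch in its own right, your outline is plausible and tracks the standard machinery of the cited sources: the three-term recurrence, the Jacobi operator, the Nevanlinna parametrization of Hamburger solutions, and the Stieltjes criterion phrased via the behaviour of $P_n(0)$ and $Q_n(0)$. The one thing to be careful about is that the condition $\sum_n 1/a_n < \infty$ is not by itself equivalent to Hamburger-indeterminacy (Carleman-type conditions are sufficient for determinacy, not necessary, and the divergence of $\sum 1/a_n$ alone does not decide the matter); you correctly hedge by calling it ``opening the door'' rather than a characterization, but a full proof would need to invoke the actual indeterminacy criterion (e.g.\ $\sum_n (P_n(0)^2 + Q_n(0)^2) < \infty$) and then verify the specific Stieltjes-determinacy condition from Chihara's construction. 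Since the thesis only needs the \emph{existence} of such a sequence and is content to cite it, your level of detail already exceeds what the context requires.
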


    \begin{cor}
        There exist probability measures $P_1, P_2 \in M$ that have equal moment sequences and satisfy $P_1 \in \Dgeqzero, P_2 \notin \Dgeqzero$.
        \label{cor:equalMomentSequencesStieltjesHamburger}
    \end{cor}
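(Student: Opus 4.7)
The plan is to derive the corollary essentially as a direct unpacking of the theorem just stated, together with the footnote equating probability measures in $\DP$ with Radon measures having $m_0 = 1$. First I would fix, by the theorem, a moment sequence $(m_n)_{n \in \N_0}$ that is Stieltjes-determinate but not Hamburger-determinate. Since at least one measure with these moments exists, we have $m_0 > 0$; after rescaling by $1/m_0$ we may assume $m_0 = 1$, which is valid because the map $\mu \mapsto \mu/m_0$ is a bijection between finite Radon measures with mass $m_0$ and probability measures, and it clearly preserves both Stieltjes- and Hamburger-determinateness (indeterminateness on one side transfers to the other through the same bijection).

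Next I would use Stieltjes-determinateness to obtain the unique Radon measure $P_1$ on $[0,\infty)$ with moment sequence $(m_n)$; by the $m_0 = 1$ normalization this is a probability measure, hence $P_1 \in \Dgeqzero \cap M$. Since the same sequence is \emph{not} Hamburger-determinate, there exist at least two Radon measures on $\R$ with these moments; one of them is $P_1$, so let $P_2$ be any other such measure. Then $P_2 \in M$ (it has all moments by assumption), and $P_2$ cannot be supported on $[0,\infty)$: if it were, then $P_2$ would be a Radon measure on $[0,\infty)$ with moment sequence $(m_n)$, and Stieltjes-determinateness would force $P_2 = P_1$, contradicting the choice of $P_2$. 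Therefore $P_2 \notin \Dgeqzero$, while $P_1$ and $P_2$ share the same moment sequence by construction.

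The only genuine subtleties are bookkeeping: confirming that the normalization $\mu \mapsto \mu/m_0$ preserves determinateness (immediate from the bijection), and confirming that we are producing \emph{probability} measures rather than merely finite Radon measures (handled by the footnote, since every finite Borel measure on $\R$ is Radon and the probability measures are exactly those with $m_0 = 1$). There is no serious obstacle here — the theorem does all the heavy lifting, and this corollary merely reinterprets ``two Hamburger measures, one of which is Stieltjes'' in the language of the set $\Dgeqzero$ that the rest of the section uses.
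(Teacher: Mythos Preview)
The proposal is correct and takes essentially the same approach as the paper: both use the Stieltjes-determinate but Hamburger-indeterminate sequence from the preceding theorem, identify the unique Stieltjes measure as $P_1$ and any other Hamburger solution as $P_2$, argue that $P_2 \notin \Dgeqzero$ by Stieltjes uniqueness, and normalize by $1/m_0$ to get probability measures. The only cosmetic difference is that you normalize the moment sequence first and then construct the measures, whereas the paper constructs $\mu_1,\mu_2$ first and normalizes at the end.
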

    \begin{proof}
        \sloppypar{
        Let $(m_n)_{n \geq 0}$ be a moment sequence which is Stieltjes-determinate, but not Hamburger-determinate.
        Let $\mu_1$ be the unique measure in $\Dgeqzero$ with that moment sequence.
        Let $\mu_2 \neq \mu_1$ be a different measure with that moment sequence, which exists since the sequence is not Hamburger-determinate. 
        Since $\mu_1$ is unique in the Stieltjes sense, the support of $\mu_2$ must overlap with $(-\infty, 0)$.
        If $m_0 \neq 1$, the measures constructed are not probability measures: 
        We normalize them and define $P_1 = \frac{1}{m_0}\mu_1, P_2 = \frac{1}{m_0}\mu_2$, which are probability measures which both have the moment sequence $(\frac{m_n}{m_0})_{n \geq 0}$.
    }
    \end{proof}
    
    This result allows us to show that two Stieltjes moment sequences can alternate: 
    \begin{ex}
        Let $P_1, P_2 \in M$ be measures that satisfy $P_1 \in \Dgeqzero, P_2 \notin \Dgeqzero$ and have the same moment sequence $(m_n)_{n \geq 0}$, as constructed in Corollary \ref{cor:equalMomentSequencesStieltjesHamburger}.
        Define a measure $\tilde{P}_2: A \mapsto P_2\bigpars{A \cap [0, \infty)} + \1_A(0) P_2\bigpars{(\infty, 0)}$, which shifts the probability mass $P_2$ puts on the negative semi-axis to the point $0$, and has support in $[0, \infty)$.
        Then its moment sequence $(\tilde{m}_n)_{n \geq 0}$ is given by $\tilde{m}_0 = 1$, and for $n > 0$:
        \begin{gather*}
            \tilde{m}_n 
            = \lintegral{[0, \infty)}{x^n}{\tilde{P}_2}
            = \lintegral{[0, \infty)}{x^n}{P_2}
            = \lintegral{\R}{x^n}{P_2} - \lintegral{(-\infty, 0)}{x^n}{P_2}
            = m_n - \lintegral{(-\infty, 0)}{x^n}{P_2}.
        \end{gather*}
        By construction of $P_2$, the term $\lintegral{(-\infty, 0)}{x^n}{P_2}$ is strictly positive for $n$ even, and strictly negative for $n$ odd.
        Therefore $(\tilde{m}_n)_{n \geq 0}$ alternates around $(m_n)_{n \geq 0}$,
        and both sequences are moment sequences of probability measures supported on a subset of $[0, \infty)$.
    \end{ex}

    This shows that the set $(\Dgeqzero \cap \M)$ of probability measures supported on a subset of $[0, \infty)$ that have moments of all orders is still too large for $\leqtail$ to be total.
    
    \subsubsection{Distributions with Non-Negative Bounded Support}
    Next we look at distributions with support in a bounded interval $[a, b]$, $0 \leq a < b$.
    This is related to the Hausdorff moment problem where the bounded interval is $[0, 1]$.
    A sufficient and necessary condition for a sequence to be a Hausdorff moment sequence is based on repeatedly taking differences of successive terms.
    \begin{defn}
        The \emph{difference operator} $\Delta: \R^{\N_0} \to \R^{\N_0}$ maps a sequence of real numbers to the sequence of its successive differences:
        \begin{gather*} 
            \Delta((s_n)_{n \in \N_0}) \coloneqq (s_{n+1} - s_n)_{n \in \N_0}, ~ \text{ also written as }\,
            (\Delta s)_n \coloneqq s_{n+1} - s_n.
        \end{gather*} 
    \end{defn}

    \begin{thm}[\cite{bib:hausdorffMomentprobleme}]
        A sequence $m = (m_n)_{n \in \N_0}$ is a Hausdorff moment sequence, i.e. is the moment sequence of a measure $\mu$ on with support in $[0, 1]$,
        if and only if
        \begin{gather}
            \forall n, k \in \N: (-1)^k (\Delta^k m)_n \geq 0.
            \label{eq:completelyMonotonicSequence}
        \end{gather}
        A sequence that satisfies \eqref{eq:completelyMonotonicSequence} is called \emph{completely monotonic} \cite[Section III.4]{bib:widderTheLaplaceTransform}.
        \label{thm:hausdorffMomentSequenceCharacterization}
    \end{thm}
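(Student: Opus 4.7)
The plan is a two-direction proof of this classical theorem due to Hausdorff.

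For the necessity direction, assume $m_n = \int_{[0,1]} x^n \, d\mu(x)$ for some measure $\mu$ supported on $[0,1]$. By induction on $k$ I would prove the identity $(-1)^k (\Delta^k m)_n = \int_{[0,1]} x^n (1-x)^k \, d\mu(x)$, from which the complete monotonicity condition follows immediately since the integrand is non-negative on $[0,1]$. The base case $k=0$ is the definition of $m_n$. For the induction step, I write $(\Delta^{k+1}m)_n = (\Delta^k m)_{n+1} - (\Delta^k m)_n$, apply the induction hypothesis to each term, and factor out $-(1-x)$ inside the integral to obtain the identity for $k+1$.

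For the sufficiency direction, the strategy is to construct $\mu$ as a weak limit of explicit discrete approximants. Given a completely monotonic sequence $(m_n)$, for each $N \in \N$ I would define the discrete Borel measure $\mu_N \coloneqq \sum_{k=0}^{N} \binom{N}{k} (-1)^{N-k} (\Delta^{N-k} m)_k \, \delta_{k/N}$ on $[0,1]$. Complete monotonicity ensures every weight is non-negative, so $\mu_N$ is a genuine measure, and a telescoping binomial-coefficient identity (obtained by expanding $1 = (x+(1-x))^N$ and matching coefficients) shows that its total mass equals $m_0$. Since $[0,1]$ is compact, the family $\{\mu_N\}$ is uniformly bounded and tight, so Helly's selection theorem yields a subsequence $\mu_{N_i}$ converging weakly to some finite Borel measure $\mu$ on $[0,1]$.

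It then remains to verify that $\mu$ has the prescribed moments. For each fixed $j$, the function $x \mapsto x^j$ is continuous and bounded on $[0,1]$, so weak convergence gives $\int x^j \, d\mu_{N_i} \to \int x^j \, d\mu$. On the other hand, $\int x^j \, d\mu_N = \sum_{k=0}^N (k/N)^j \binom{N}{k} (-1)^{N-k} (\Delta^{N-k} m)_k$ can be re-expressed as a linear combination of the $m_\ell$ via the same finite-difference identities, and this combination can be shown to converge to $m_j$ as $N \to \infty$. Hence $\int x^j \, d\mu = m_j$ for every $j$, completing the proof. The main obstacle is the combinatorial bookkeeping underlying the sufficiency direction: both the total-mass identity for $\mu_N$ and the convergence of its moments to $m_j$ rest on identities involving finite differences of $m$, and this is the only point where complete monotonicity is used in an essential way; once those identities are in place, the passage to the weak limit is routine thanks to compactness of $[0,1]$.
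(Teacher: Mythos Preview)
Your necessity argument is essentially identical to the paper's: both prove by induction that $(-1)^k(\Delta^k m)_n = \int_{[0,1]} x^n(1-x)^k\,d\mu(x)$ and read off non-negativity of the integrand on $[0,1]$.

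The paper, however, explicitly restricts itself to proving only the necessity direction (``We only show here that condition \eqref{eq:completelyMonotonicSequence} is necessary, which is the easier part of the proof''), deferring sufficiency to the cited reference. Your proposal goes further and sketches the classical Hausdorff construction for sufficiency via the Bernstein-type approximants $\mu_N = \sum_{k=0}^N \binom{N}{k}(-1)^{N-k}(\Delta^{N-k}m)_k\,\delta_{k/N}$, Helly's selection theorem, and moment convergence. This outline is correct and is indeed the standard route; the combinatorial identities you flag (total mass equals $m_0$, and $\int x^j\,d\mu_N \to m_j$) are genuine but well-known, the latter being essentially the statement that Bernstein polynomials of $x\mapsto x^j$ converge uniformly to $x^j$. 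So your proposal is sound and strictly extends what the paper actually proves.
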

    \begin{proof}[Proof (necessity)]
        We only show here that condition \eqref{eq:completelyMonotonicSequence} is necessary, which is the easier part of the proof.
        Let $\mu$ be the measure with support in $[0, 1]$ which has $m$ as its moment sequence.
        We first prove by induction over $k$ that $(\Delta^k m)_n = \lintegral{[0, 1]}{x^n (x-1)^k}{\mu(x)}$:
        The statement holds for $k=0$, since
        \begin{gather*}
            (\Delta m)_n 
            = m_{n+1} - m_{n} 
            = \lintegral{[0, 1]}{x^{n+1}}{\mu(x)} - \lintegral{[0, 1]}{x^n}{\mu(x)} 
            = \lintegral{[0, 1]}{x^n(x-1)}{\mu(x)}.
        \end{gather*}
        For the induction step, assume the statement holds for $k \in \N$. Then for $k+1$:
        \begin{gather*}
            (\Delta^{k+1} m)_n
            = \lintegral{[0, 1]}{x^{n+1} (x-1)^k}{\mu(x)} - \lintegral{[0, 1]}{x^n (x-1)^k}{\mu(x)}
            = \lintegral{[0, 1]}{x^{n}(x-1)^{k+1}}{\mu(x)}.
        \end{gather*}
        From this we directly get that $(-1)^k (\Delta^k m)_n = \lintegral{[0, 1]}{x^n (1-x)^k}{\mu(x)}$.
        Since we integrate over $[0, 1]$, the integrand is non-negative on the domain of integration, and $(-1)^k (\Delta^k m)_n \geq 0$.
    \end{proof}

    While Hausdorff's characterization works for distributions with support in $[0, 1]$, we are also interested in distributions supported in $[1, b]$ for some $b > 1$.
    Moment sequences behave somewhat differently in that case: For example, while they are monotonically decreasing in the former case, they are monotonically increasing in the latter, even growing without bound if there is some mass to the right of $1$.
    We can obtain a first necessary criterion, which looks similar to \eqref{eq:completelyMonotonicSequence}, for a sequence to be the moment sequence of such a distribution:
    \begin{cor}
        \label{cor:hausdorff-1-b-necessaryCondition}
        For any moment sequence $m$ of a measure $\mu$ with support in [1, b], all successive differences are non-negative:
        \begin{gather}
            \forall n, k \in \N: (\Delta^k m)_n \geq 0.
            \label{eq:hausdorff-1-b-necessaryCondition}
        \end{gather}
    \end{cor}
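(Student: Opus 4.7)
The plan is to imitate the necessity argument from the proof of Theorem \ref{thm:hausdorffMomentSequenceCharacterization} almost verbatim, noting that the computation of iterated differences as an integral did not actually use the support restriction $[0,1]$ anywhere; only the sign of the integrand at the end depended on it.

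First, I would establish by induction on $k$ that for every $n \in \N_0$,
\begin{gather*}
    (\Delta^k m)_n = \lintegral{[1,b]}{x^n(x-1)^k}{\mu(x)}.
\end{gather*}
The base case $k=0$ is just the definition $m_n = \lintegral{[1,b]}{x^n}{\mu(x)}$; and for $k=1$,
\begin{gather*}
    (\Delta m)_n = m_{n+1} - m_n = \lintegral{[1,b]}{x^{n+1} - x^n}{\mu(x)} = \lintegral{[1,b]}{x^n(x-1)}{\mu(x)}.
\end{gather*}
For the induction step, assuming the claim holds for some $k$, I would apply it to $n+1$ and $n$ and subtract:
\begin{gather*}
    (\Delta^{k+1} m)_n = (\Delta^k m)_{n+1} - (\Delta^k m)_n = \lintegral{[1,b]}{x^n(x-1)^k(x-1)}{\mu(x)} = \lintegral{[1,b]}{x^n(x-1)^{k+1}}{\mu(x)}.
\end{gather*}
This step is completely routine and identical in spirit to the Hausdorff proof above, so it is not the main difficulty.

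With the integral representation in hand, the conclusion is immediate: on the support $[1,b]$ we have $x \geq 1$, hence $x^n \geq 0$ and $(x-1)^k \geq 0$ for every $n, k \in \N_0$. Therefore the integrand is non-negative throughout the domain of integration, and so
\begin{gather*}
    (\Delta^k m)_n = \lintegral{[1,b]}{x^n(x-1)^k}{\mu(x)} \geq 0.
\end{gather*}
There is really no main obstacle here; the statement is essentially the Hausdorff necessity direction transported from $[0,1]$ to $[1,b]$, with the sign $(-1)^k$ disappearing because $(x-1)$ is now non-negative rather than non-positive on the support. The only thing worth remarking on is that the argument uses only the defining property of moments and the location of the support, so it works for any finite Borel measure on $[1,b]$ whose moments exist, which is automatic for bounded support.
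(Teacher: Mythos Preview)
Your proposal is correct and follows exactly the same approach as the paper: establish the integral representation $(\Delta^k m)_n = \lintegral{[1,b]}{x^n(x-1)^k}{\mu(x)}$ (which the paper simply cites from the preceding proof), then observe that the integrand is non-negative on $[1,b]$ since $x-1 \geq 0$ there.
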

    \begin{proof}
        As in the last proof, we have that $(\Delta^k m)_n = \lintegral{[1, b]}{x^n (x-1)^k}{\mu(x)}$.
        Since in this case, $x-1 > 0$ for all $x \in [1, b]$, the integrand is non-negative, and $(\Delta^k m)_n \geq 0$.
    \end{proof}

    Another necessary condition similar to \eqref{eq:completelyMonotonicSequence} can be stated for moment sequences of distributions on $[0, b]$, using a modified difference operator:
    \newcommand{\DeltaB}[1]{\Delta\hspace{-.225cm}{\mathrel{\raisebox{0.045cm}{\scalebox{.9}{$\scriptscriptstyle#1$}}}\hspace{.22cm}}\hspace{-0.09cm}}
    \begin{lemma}
        \label{lemma:hausdorff-0-b-necessaryCondition}
        Let $b \geq 0$, $\DeltaB{b}: \R^{\N_0} \to \R^{\N_0}$, $(\DeltaB{b} m)_n \coloneqq m_{n+1} - b * m_n$.
        Then for any moment sequence $m$ of a measure $\mu$ with support in $[0, b]$:
        \begin{gather}            
            \forall n, k \in \N: (-1)^k (\DeltaB{b}^k m)_n \geq 0.
            \label{eq:hausdorff-0-b-necessaryCondition}
        \end{gather}
    \end{lemma}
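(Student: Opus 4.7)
The plan is to mimic the proof of Theorem \ref{thm:hausdorffMomentSequenceCharacterization} (necessity) in essentially the same way, but using the twisted operator $\DeltaB{b}$ in place of $\Delta$. The key identity will be
\begin{gather*}
    (\DeltaB{b}^k m)_n = \lintegral{[0, b]}{x^n (x-b)^k}{\mu(x)},
\end{gather*}
from which \eqref{eq:hausdorff-0-b-necessaryCondition} follows by pulling out a factor of $(-1)^k$: since $x \in [0, b]$ implies $x^n \geq 0$ and $(b-x)^k \geq 0$, the integrand of $(-1)^k(\DeltaB{b}^k m)_n = \lintegral{[0, b]}{x^n (b-x)^k}{\mu(x)}$ is non-negative.

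First I would verify the base case $k = 1$ directly: by definition and linearity of the integral,
\begin{gather*}
    (\DeltaB{b} m)_n = m_{n+1} - b * m_n = \lintegral{[0, b]}{x^{n+1}}{\mu(x)} - b \lintegral{[0, b]}{x^n}{\mu(x)} = \lintegral{[0, b]}{x^n(x-b)}{\mu(x)}.
\end{gather*}
Then I would do the induction step on $k$: assuming the identity for $k$, apply $\DeltaB{b}$ once more and compute
\begin{gather*}
    (\DeltaB{b}^{k+1} m)_n = (\DeltaB{b}^k m)_{n+1} - b * (\DeltaB{b}^k m)_n = \lintegral{[0, b]}{x^n(x-b)^k(x - b)}{\mu(x)} = \lintegral{[0, b]}{x^n(x-b)^{k+1}}{\mu(x)},
\end{gather*}
which closes the induction.

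There is no serious obstacle here: the argument is essentially the same bookkeeping as in the Hausdorff case, with the shift $x - 1$ replaced by $x - b$ so that the sign on $[0, b]$ is controlled in the same way. The only small point to be careful about is that the operator $\DeltaB{b}$ reduces to the ordinary $\Delta$ exactly when $b = 1$, so this lemma properly generalizes the previous corollary; for $b < 1$ the signs of $(\DeltaB{b}^k m)_n$ still alternate, while for $b > 1$ the interval can extend beyond $1$ and the previous Hausdorff-style criterion no longer applies, which is what makes the twisted operator necessary.
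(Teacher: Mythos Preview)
Your proposal is correct and follows essentially the same approach as the paper: the paper's proof simply states that ``analogously to the proof of \ref{thm:hausdorffMomentSequenceCharacterization}, we can prove by induction that $(\DeltaB{b}^k m)_n = \lintegral{[0, b]}{x^n (x-b)^k}{\mu(x)}$'' and then concludes non-negativity from the sign of the integrand, which is exactly the identity and induction you spell out. Your version just makes the base case and induction step explicit where the paper leaves them implicit.
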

    \begin{proof}
        Analogously to the proof of \ref{thm:hausdorffMomentSequenceCharacterization},
        we can prove by induction that $(\DeltaB{b}^k m)_n = \lintegral{[0, b]}{x^n (x-b)^k}{\mu(x)}$.
        Since the integrand in the expression $(-1)^k (\DeltaB{b}^k m)_n = \lintegral{[0, b]}{x^n (b - x)^k}{\mu(x)}$ is non-negative, we can conclude $(-1)^k (\DeltaB{b}^k m)_n \geq 0$.
    \end{proof}

    While Corollary \ref{cor:hausdorff-1-b-necessaryCondition} and Lemma \ref{lemma:hausdorff-0-b-necessaryCondition} give necessary conditions, it would be useful to have a condition that is also sufficient. However, we can use Lemma \ref{lemma:hausdorff-0-b-necessaryCondition} to show that Corollary \ref{cor:hausdorff-1-b-necessaryCondition} is not sufficient, as the following example demonstrates.
    
    \begin{ex}
        We construct a sequence that satisfies the condition of Corollary \ref{cor:hausdorff-1-b-necessaryCondition} and alternates around a moment sequence:
        Let sequences $s, q$ be given by $s_n = 4^n$ for $n \in \N_0$, $q_n = (-1)^n\frac{1}{2^n}$ for $n \in \N$ and $q_0 = 0$. $s$ is the moment sequence of $\delta_4$. We show that $(s + q)$ satisfies the conditions of Corollary \ref{cor:hausdorff-1-b-necessaryCondition}:
        Let $k \in \N$.
        By induction, one can show that $(\Delta^k s)_n = 3^k 4^n$ for all $n \in \N_0$, as well as $(\Delta^k q)_n = 3^k\frac{(-1)^{n+k}}{2^{n+k}}$ for $n \in \N$, and $(\Delta^k q)_0 = 3^k \frac{(-1)^k}{2^k} - (-1)^k$.
        Since $\Delta$ is a linear operator, we have ${(\Delta^k (s+q))_n} = {(\Delta^k s)_n + (\Delta^k q)_n}$ for all $n, k \in \N_0$. This sum is non-negative:
        If $n \geq 1$, we get $(\Delta^k (s+q))_n = 3^k\bigpars{4^n + \frac{(-1)^{n+k}}{2^{n+k}}} \geq 0$, since $4^n \geq 1, \frac{(-1)^{n+k}}{2^{n+k}} > -1$.
        If $n = 0$, we have ${(\Delta^k (s+q))_n} = {3^k\bigpars{1 + \frac{(-1)^{k}}{2^{k}}} - (-1)^k}$, which is non-negative as well:
        For $k = 0$, this becomes ${1*(1 + 1) - 1 = 1}$.
        For $k \geq 1$, we have $1 + \frac{(-1)^{k}}{2^{k}} \geq \frac{1}{2}$ and therefore $3^k\bigpars{1 + \frac{(-1)^{k}}{2^{k}}} - (-1)^k \geq \frac{3}{2} - (-1)^k > 0$.
        
        
        So $(s+q)$ satisfies the condition of Corollary \ref{cor:hausdorff-1-b-necessaryCondition}. 
        By construction, it alternates around the moment sequence $s$, because $q$ alternates around 0.
        As the sequence also satisfies $s_0 + q_0 = s_0 = 1$ (necessary to be the moment sequence of a probability measure), these properties make it a candidate for a counterexample to the tail order being total for probability measures with non-negative, bounded support. 
        
        However, $(s+q)$ does not satisfy \eqref{eq:hausdorff-0-b-necessaryCondition} for any reasonable choice of $b$:
        If $(s+q)$ was the moment sequence of some measure $\mu \in \Dgeqzero$, then $\mu \in \DP_{[0, b]}$ would hold if and only if for all $n \geq 0: s_n + q_n < b^n$ (e.g. use Theorem \ref{thm:tailOrderSufficientConditionsGeneral} on $\mu, \delta_b$ for the “if” part, also cf. \cite[Proposition 4.1]{bib:schmuedgenTheMomentProblem}).
        We can take the bound $s_n + q_n \leq 5^n$, which is satisfied since $s_n \leq 4^n, q_n < 1$. So the support of $\mu$ must be a subset of $[0, 5]$.
        However, calculations show that $(-1)^2 (\DeltaB{5}^2 (s+q))_1$ is negative:
        We have $(s+q)_1 = 4 - \frac{1}{2} = \frac{7}{2}$, $(s+q)_2 = 16 + \frac{1}{4} = \frac{65}{4}$ and $(s+q)_3 = 64 - \frac{1}{8} = \frac{511}{8}$.
        From this we get $(\DeltaB{5} (s+q))_1 = \frac{65}{4} - 5 * \frac{7}{2} = - \frac{5}{4}$ and $(\DeltaB{5} (s+q))_2 = \frac{511}{8} - 5 * \frac{65}{4} = - \frac{139}{8}$, and finally $(\DeltaB{5}^2 (s+q))_1 = - \frac{139}{8} + 5*\frac{5}{4} = - \frac{89}{8}$.
        Therefore, $(-1)^2 (\DeltaB{5}^2 (s+q))_1  = - \frac{89}{8} < 0$, violating the necessary condition of Lemma \ref{lemma:hausdorff-0-b-necessaryCondition}.        
        So $(s+q)$ is not a moment sequence of a measure in $\DP_{[0, b]}$ even though the condition of Corollary \ref{cor:hausdorff-1-b-necessaryCondition} is satisfied, therefore this condition is not sufficient.
        \qed
    \end{ex}

    While the condition \eqref{eq:hausdorff-1-b-necessaryCondition} is not sufficient, the condition \eqref{eq:hausdorff-0-b-necessaryCondition} could well be:
    Possibly, a proof of \cite[Theorem 3.14]{bib:schmuedgenTheMomentProblem} for showing the sufficiency of the complete monotonicity condition \eqref{eq:completelyMonotonicSequence} could be altered to show the sufficiency of \eqref{eq:hausdorff-0-b-necessaryCondition}.
    It is also important to note that other characterizations for moment sequences of measures supported on an arbitrary compact interval $[a, b]$ exist, and two conditions are given in \cite[Theorem 3.13]{bib:schmuedgenTheMomentProblem}. We will not further go into these conditions, however:
    The reason we studied conditions like \eqref{eq:hausdorff-1-b-necessaryCondition} and \eqref{eq:hausdorff-0-b-necessaryCondition} was that we hoped to show that two moment sequences satisfying these conditions could not alternate around each other, and therefore show that $\leqtail$ was a total order between distributions in $\Dgeqzero$ with bounded support.
    However in the meanwhile, a counterexample was constructed by Jeremias Epperlein, which will be presented in the following:
    It constructs two discrete distributions with infinite support in $[a, b]$, $0 \leq a < b$, that have alternating moment sequences.
    We first present a lemma needed for the construction, and then proceed with the counterexample.
    
    \begin{lemma}[{\cite{bib:epperleinTailOrderTotalCounterexample}}]
        \label{lem:tailOrderTotalCounterexample-lemmaCondition}
        Let $0 \leq a < b$. Let $n \in \N$ and $(c_l)_{l \in [n]}$, $(d_l)_{l \in [n]}$ be initial parts of sequences in $[a, b)$.
        Then there is an arbitrarily large $k \in \N$ and a $c_{n+1} \in (c_n, b)$ such that
        \begin{gather}
            \sum_{l=1}^{n} \frac{1}{2^l} d_l^k + \frac{1}{2^{n+1}} d_n^k + \sum_{l = n+2}^{\infty} \frac{1}{2^l} b^k
            \,<\, \sum_{l=1}^{n} \frac{1}{2^l} c_l^k + \sum_{l = n+1}^{\infty} \frac{1}{2^l} c_{n+1}^k.
            \label{eq:tailOrderTotalCounterexample-lemmaCondition}
        \end{gather}
    \end{lemma}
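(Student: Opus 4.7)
The plan is to exploit the fact that, after collecting the geometric tails on both sides, the coefficient of $c_{n+1}^k$ on the right is exactly twice the coefficient of $b^k$ on the left, which gives us enough slack to push $c_{n+1}$ close to $b$ in a $k$-dependent way.

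First I would simplify the infinite tails using $\sum_{l=n+1}^{\infty} 2^{-l} = 2^{-n}$ and $\sum_{l=n+2}^{\infty} 2^{-l} = 2^{-(n+1)}$, so that \eqref{eq:tailOrderTotalCounterexample-lemmaCondition} becomes equivalent to
\begin{gather*}
A_k + \tfrac{1}{2^{n+1}} b^k \,<\, B_k + \tfrac{1}{2^n} c_{n+1}^k,
\end{gather*}
where $A_k = \sum_{l=1}^n 2^{-l} d_l^k + 2^{-(n+1)} d_n^k$ and $B_k = \sum_{l=1}^n 2^{-l} c_l^k$. Set $M \coloneqq \max\{c_1, \dots, c_n, d_1, \dots, d_n\}$; since all these values lie in $[a, b)$ we have $M < b$. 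The coefficients in $A_k$ sum to $\sum_{l=1}^n 2^{-l} + 2^{-(n+1)} = 1 - 2^{-(n+1)} < 1$, hence $A_k \le M^k$, and $B_k \ge 0$. It therefore suffices to find $c_{n+1} \in (c_n, b)$ and an arbitrarily large $k$ with
\begin{gather*}
M^k + \tfrac{1}{2^{n+1}} b^k \,<\, \tfrac{1}{2^n} c_{n+1}^k.
\end{gather*}

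The key move is to choose $c_{n+1}$ as a function of $k$: set $c_{n+1}(k) \coloneqq b\cdot (3/4)^{1/k}$. Then $c_{n+1}(k)^k = \tfrac{3}{4} b^k$, and the target inequality becomes
\begin{gather*}
M^k + \tfrac{1}{2^{n+1}} b^k \,<\, \tfrac{3}{2^{n+2}} b^k,
\end{gather*}
which, since $\tfrac{3}{2^{n+2}} - \tfrac{1}{2^{n+1}} = \tfrac{1}{2^{n+2}}$, reduces to $M^k < \tfrac{1}{2^{n+2}} b^k$, i.e. $(M/b)^k < 2^{-(n+2)}$. Because $M/b < 1$, this is satisfied for all sufficiently large $k$; in particular for arbitrarily large $k$, as required.

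Finally I would check the side condition $c_{n+1}(k) \in (c_n, b)$: the upper bound $c_{n+1}(k) < b$ holds because $(3/4)^{1/k} < 1$, and since $(3/4)^{1/k} \to 1$ as $k \to \infty$ while $c_n < b$, we have $c_{n+1}(k) > c_n$ for all sufficiently large $k$. Taking $k$ large enough to satisfy both this and the inequality $(M/b)^k < 2^{-(n+2)}$ completes the proof. There is no real obstacle here beyond spotting the factor-of-two gap between the two ``dominant'' coefficients $2^{-n}$ and $2^{-(n+1)}$; once the $k$-dependent choice of $c_{n+1}$ is made, everything reduces to routine exponential growth comparison.
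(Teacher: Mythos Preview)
Your proof is correct, but it takes a different route from the paper's. The paper first sets $c_{n+1}=b$, observes that after cancelling the common tail $\sum_{l\ge n+2}2^{-l}b^k$ the inequality reduces to
\[
\sum_{l=1}^{n}\tfrac{1}{2^l}d_l^k+\tfrac{1}{2^{n+1}}d_n^k \;<\; \sum_{l=1}^{n}\tfrac{1}{2^l}c_l^k+\tfrac{1}{2^{n+1}}b^k,
\]
which holds for all sufficiently large $k$ because the $b^k$ term dominates. Then, with such a $k$ fixed, a continuity argument in $c_{n+1}$ lets one push $c_{n+1}$ slightly below $b$ while keeping it above $c_n$. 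Your argument instead makes an explicit $k$-dependent choice $c_{n+1}(k)=b\,(3/4)^{1/k}$ and verifies the inequality directly, using that the coefficient $2^{-n}$ in front of $c_{n+1}^k$ is twice the coefficient $2^{-(n+1)}$ of $b^k$. The paper's version is a clean two-step ``limit then perturb'' argument; yours is more constructive and makes the slack in the coefficients quantitatively visible, at the cost of a slightly longer computation. Both rest on the same underlying observation, but the mechanisms for producing $c_{n+1}<b$ are genuinely different.
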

    \begin{proof}
        If we let $c_{n+1} = b$, we can cancel out the equal infinite sums on both sides, and then the inequality is equivalent to
        \begin{gather}
            \sum_{l=1}^{n} \frac{1}{2^l} d_l^k + \frac{1}{2^{n+1}} d_n^k
            < \sum_{l=1}^{n} \frac{1}{2^l} c_l^k + \frac{1}{2^{n+1}} b^k.
            \label{eq:tailOrderTotalCounterexample-lemmaCondition-cn+1=b}
        \end{gather}
        As $b > d_n > \dots > d_1 > 0$ and all $c_i$ are non-negative, the exponential growth of $b^k$ makes the right-hand side dominate the left-hand side for sufficiently large $k$.
        Therefore there exists a $\tilde{k} \in \N$ such that with $k = \tilde{k}$, \eqref{eq:tailOrderTotalCounterexample-lemmaCondition-cn+1=b} holds, and therefore \eqref{eq:tailOrderTotalCounterexample-lemmaCondition} holds for $c_{n+1} \coloneqq b$.
        With $k = \tilde{k}$ fixed, the right side of \eqref{eq:tailOrderTotalCounterexample-lemmaCondition} is constant and the left side depends continuously on $c_{n+1}$. We can therefore pick $c_{n+1} \in (c_n, b)$ such that the inequality still holds.
    \end{proof}
    
    \begin{ex}[{\cite{bib:epperleinTailOrderTotalCounterexample}}]
        \textbf{\textsl{Two discrete probability measures with support in a bounded interval $\mathbf{\boldsymbol{[a, b] \subseteq [0, \infty)}}$ that are not $\leqtail$-comparable}.}
        
        Let $0 \leq a < b$.
        We will construct two sequences $x = (x_l)_{l \in \N}$, $y = (y_l)_{l \in \N}$ in $[a, b]$ and define probability measures $P_x, P_y$ as sums of Dirac measures:
        \begin{gather*}
            P_x \coloneqq \sum_{l=1}^{\infty} \frac{1}{2^l} \delta_{x_l}, \quad
            P_y \coloneqq \sum_{l=1}^{\infty} \frac{1}{2^l} \delta_{y_l}.
        \end{gather*}
        The sequences are constructed recursively as follows: $x_1, y_1 \in [a, b)$ are chosen arbitrarily.
        For $n \in \N$, if $(n+1)$ is even, set $y_{n+1} \coloneqq y_n$, and apply Lemma \ref{lem:tailOrderTotalCounterexample-lemmaCondition} to $(x_l)_{l \in [n]}, (y_l)_{l \in [n]}$ to obtain two numbers $k_{n+1} > k_n$, $x_{n+1} \in (x_n, b)$ that satisfy \eqref{eq:tailOrderTotalCounterexample-lemmaCondition}.
        If $(n+1)$ is odd, set $x_{n+1} \coloneqq x_n$, and apply Lemma \ref{lem:tailOrderTotalCounterexample-lemmaCondition} to $(y_l)_{l \in [n]}, (x_l)_{l \in [n]}$ to obtain $k_{n+1} > k_n$, ${y_{n+1} \in (y_n, b)}$ that satisfy \eqref{eq:tailOrderTotalCounterexample-lemmaCondition}.
        In summary, the sequences look like this:
        \begin{gather*}
            x = (x_1, x_2, x_2, x_4, x_4, x_6, \dots),~ y = (y_1, y_1, y_3, y_3, y_5, y_5, \dots).
        \end{gather*}
        Condition \eqref{eq:tailOrderTotalCounterexample-lemmaCondition} ensures that the moment sequences of $P_x, P_y$ alternate: For $n \geq 1$, if $(n + 1)$ is even,
        \begin{multline*}
            m_{k_{n+1}}(P_x) 
            = \sum_{l=1}^{\infty} \frac{1}{2^l} (x_l)^{k_{n+1}}
            \geq  \sum_{l=1}^{n} \frac{1}{2^l} (x_l)^{k_{n+1}} + \sum_{l = n+1}^{\infty} \frac{1}{2^l} (x_{n+1})^{k_{n+1}} \\
            \underset{\eqref{eq:tailOrderTotalCounterexample-lemmaCondition}}
            {>}\, \sum_{l=1}^{n} \frac{1}{2^l} (y_l)^{k_{n+1}} + \frac{1}{2^{n+1}} (y_n)^{k_{n+1}} + \sum_{l = n+2}^{\infty} \frac{1}{2^l} b^{k_{n+1}}
            \geq \sum_{l=1}^{\infty} \frac{1}{2^l} (y_l)^{k_{n+1}}
            = m_{k_{n+1}}(P_y).
        \end{multline*}
        By a symmetric argument, if $(n+1)$ is odd, then $m_{k_{n+1}}(P_y) > m_{k_{n+1}}(P_x)$.
        \qed
    \end{ex}
    
    \begin{ex}\textbf{\textsl{Two absolutely continuous probability measures with support in a bounded interval $\mathbf{\boldsymbol{[a, b] \subseteq [0, \infty)}}$ that are not $\leqtail$-comparable}.}
        
        The previous example requires only slight modifications to produce two absolutely continuous distributions in $\DP_{[a, b]}$ that are incomparable by $\leqtail$.
        First as in Example \ref{ex:acSufficientTailOrderConditionCounterexample}, let $h$ be a continuous density on $[0, 1]$, and for real numbers $c < d$ define the scaled version $h_{[c, d]}$.
        
        For two sequences of intervals $([\ubar{x}_l, \bar{x}_l])_{l \in \N}$, $([\ubar{y}_l, \bar{y}_l])_{l \in \N}$, define densities
        $f = \sum_{l=1}^{\infty}\frac{1}{2^l} h_{[\ubar{x}_l, \bar{x}_l]}$, $g = \sum_{l=1}^{\infty}\frac{1}{2^l} h_{[\ubar{y}_l, \bar{y}_l]}$, and $P_x, P_y$ as the probability distributions with these densities.
        One can easily calculate that the following bounds hold for $m_k(P_x)$ and $m_k(P_y)$:
        \begin{gather}
            \sum_{l =1}^\infty \frac{1}{2^l} \bar{x}_l^k \,\geq\, m_k(P_x) \,\geq\, \sum_{l =1}^\infty \frac{1}{2^l} \ubar{x}_l^k,\quad
            \sum_{l =1}^\infty \frac{1}{2^l} \bar{y}_l^k \,\geq\, m_k(P_y) \,\geq\, \sum_{l =1}^\infty \frac{1}{2^l} \ubar{y}_l^k
            \label{eq:epperleinTailOrderTotalCounterexample-AC-PxMoment}
        \end{gather}
        We apply the trick of the previous example in modified form:
        If $(n+1)$ is even, set $y_{n+1} \coloneqq y_n$, and apply Lemma \ref{lem:tailOrderTotalCounterexample-lemmaCondition} to $(\ubar{x}_l)_{l \in [n]}, (\bar{y}_l)_{l \in [n]}$ to obtain $\ubar{x}_{n+1}, k_{n+1}$. If $(n+1)$ is odd, set $x_{n+1} \coloneqq x_n$ and apply Lemma \ref{lem:tailOrderTotalCounterexample-lemmaCondition} to $(\ubar{y}_l)_{l \in [n]}, (\bar{x}_l)_{l \in [n]}$ to obtain $\ubar{y}_{n+1}, k_{n+1}$. We also make sure that $\ubar{x}_{n+1} > \bar{x}_{n}, \ubar{y}_{n+1} > \bar{y}_n$, and pick $\bar{x}_{n+1} \in (\ubar{x}_{n+1}, b), \bar{y}_{n+1} \in (\ubar{y}_{n+1}, b)$ arbitrarily.
        Then the estimates in \eqref{eq:epperleinTailOrderTotalCounterexample-AC-PxMoment} and calculations as in the previous example ensure that for even $(n+1)$,  $m_{k_{n+1}}(P_y) < m_{k_{n+1}}(P_x)$, and for odd $(n+1)$, $m_{k_{n+1}}(P_y) > m_{k_{n+1}}(P_x)$.
        
        When we constructed a density in a similar way in Example \ref{ex:acSufficientTailOrderConditionCounterexample},
        we could ensure the continuity of the sum by uniform convergence.
        In the current example, it is not clear if uniform convergence can be achieved in the definitions of $f$ and $g$.
        We would need to show that the $[\ubar{x}_l, \bar{x}_l]$ can be chosen in such a way that their lengths approach zero slower than the probability masses $\frac{1}{2^l}$: Otherwise the maxima of the functions $\frac{1}{2^l} h_{[\ubar{x}_l, \bar{x}_l]}$ would not converge to zero, and $f$ would be discontinuous at the right endpoint $b$ (and similarly for $g$).
        We will not pursue this issue here, so whether the additional requirement of continuous density functions makes $\leqtail$ total is a question for further work.
        \qed
    \end{ex}

    \section{Nash Equilibria in Tail-Ordered Games}
    \label{sec:equilibriaInTailOrderedGames}
    We will now analyze Nash equilibria of distribution-valued games with respect to the stochastic tail order.
    An important result of this section is that, unlike their real-valued counterparts, these games fail to have mixed-strategy Nash equilibria in general.
    
    We restrict our attention to games with finitely supported distributions as payoffs, and we will show that such games already fail to have mixed-strategy Nash equilibria in many cases.
    As shown in Corollary \ref{cor:discreteFiniteTailOrder-RLexEquivalence}, for finitely supported distributions, the tail order can be reduced to a lexicographic comparison.
    To make our discussion notationally easier, we represent payoff distributions with finite common support as real-valued vectors of probabilities, ordered by the reflected lexicographic order. This is justified since the distribution-valued game and its corresponding vector-valued game are isomorphic and therefore have the Nash equilibria, as the subsequent lemma shows.
    \begin{defn}
        A \emph{vector-valued game} of dimension $m \in \N$ is a game $G$ with generalized payoffs in $\R^m$.
        If it is equipped with the preorder $\leqRlex$, we call it a \emph{reflected-lexicographically ordered} (also \emph{ref-lex-ordered}) game.
        \label{def:vectorValuedGame}
    \end{defn}

    \begin{notation}
        If $G$ is a distribution-valued game, we write
        \begin{gather*}
            \supp(G) \coloneqq \bigcup\limits_{\substack{s \in S,\\k \in [n]}} \supp(u_k(s))
        \end{gather*}
        for the common support of its payoff distributions.
    \end{notation}

    \begin{defn}
        Let $G$ be a distribution-valued game with finite common payoff support $\supp(G) = \set{x_1, \dots, x_m}$, $x_1 < \dots < x_m$.
        For each $k \in [n], s \in S$, let the payoff $u_k(s)$ have the probability mass function $f_{k, s}$.
        Then the \emph{probability vector game corresponding to $G$} is the vector-valued game $H$ with the same strategies as $G$ that has the payoffs $v_k(s) \coloneqq (f_{k, s}(x_1), \dots, f_{k, s}(x_m))$ for all $k \in [n], s \in S$.
    \end{defn}

    \begin{lemma}
        Let $G$ be a distribution-valued game such that $\supp(G)$ is finite, and $\supp(G) \subseteq [1, \infty)$.
        Let $H$ be the probability vector game corresponding to $G$.
        Then  $G_{\leqtail}$ and $H_{\leqRlex}$ are isomorphic.
    \end{lemma}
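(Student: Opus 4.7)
The plan is to exhibit a concrete bijection between the relevant payoff sets and verify that it matches the two conditions in the definition of isomorphic games. Write $\supp(G) = \set{x_1, \dots, x_m}$ with $x_1 < \dots < x_m$, and consider the restricted payoff sets $A \coloneqq \set{P \in \DP \mid \supp(P) \subseteq \set{x_1, \dots, x_m}}$ for $G$ and $B \coloneqq \set{(p_1, \dots, p_m) \in \Rp^m \mid \sum_{i=1}^m p_i = 1}$ for $H$. Both games' payoff functions land in these subsets by construction, so we may regard $G$ as a game with payoffs in $A$ and $H$ as a game with payoffs in $B$ without altering their strategy structure or equilibria.

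Next, I would define the candidate bijection $\phi: A \to B,\ P \mapsto (P(\set{x_1}), \dots, P(\set{x_m}))$. This is clearly a bijection: its inverse sends a probability vector $(p_1, \dots, p_m)$ to the discrete probability measure $\sum_{i=1}^m p_i \delta_{x_i}$. The first isomorphism condition $v_k = \phi \circ u_k$ is immediate from the definition of the probability vector game: by construction $v_k(s) = (f_{k, s}(x_1), \dots, f_{k, s}(x_m))$, where $f_{k, s}$ is the probability mass function of $u_k(s)$, and this is exactly $\phi(u_k(s))$ since $f_{k, s}(x_i) = u_k(s)(\set{x_i})$.

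It remains to verify the order-equivalence $P_1 \leqtail P_2 \lra \phi(P_1) \leqRlex \phi(P_2)$ for all $P_1, P_2 \in A$. This is precisely the content of Corollary \ref{cor:discreteFiniteTailOrder-RLexEquivalence}: any two $P_1, P_2 \in A$ are discrete distributions with common finite support contained in $\set{x_1, \dots, x_m} \subseteq [1, \infty)$, and their probability vectors under $\phi$ are exactly the vectors $v^f, v^g$ appearing in the corollary. Applying the corollary gives the equivalence directly. With all conditions of the definition of isomorphic games satisfied, we conclude that $G_{\leqtail}$ and $H_{\leqRlex}$ are isomorphic.

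No real obstacle is anticipated here; the lemma is essentially a reformulation of Corollary \ref{cor:discreteFiniteTailOrder-RLexEquivalence} in the language of game isomorphism, and the only mild subtlety is to restrict to the natural payoff subsets $A$ and $B$ so that $\phi$ is an actual bijection rather than an embedding into $\DP$ and $\R^m$.
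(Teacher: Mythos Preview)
Your proposal is correct and follows essentially the same approach as the paper: both restrict the payoff sets to the finitely-supported distributions and the probability simplex, define the same bijection $\phi$ taking a measure to its vector of point masses, and then invoke Corollary~\ref{cor:discreteFiniteTailOrder-RLexEquivalence} for the order equivalence. Your version is slightly more explicit (you spell out the inverse of $\phi$ and flag the restriction to $A$ and $B$ as the one mild subtlety), but the argument is the same.
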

    \begin{proof}
        Let $\DP_{\supp(G)}$ denote the measures from $\DP$ supported on $\supp(G)$.
        Denote by $B \coloneqq \set{(p_1, \dots, p_m) \in [0, 1]^m \mid \sum_{i=1}^{m} p_i = 1} \subseteq \R^m$ the set of $m$-dimensional probability vectors.
        Then $\phi: \DP_{\supp(G)} \to B, P \mapsto (P(\set{x_1}), \dots, P(\set{x_n}))$ is a bijective function, satisfying $\phi(u_k(s)) = v_k(s)$ for all $k \in [n], s \in S$ by construction of $v_k(s)$.
        Since the payoffs of $G$ are from $\DP_{\geq 1}$, we can apply Corollary \ref{cor:discreteFiniteTailOrder-RLexEquivalence}:
        This shows that for all $s, t \in S, k \in [n]$, it holds that $u_k(s) \leqtail u_k(t)$ if and only if $v_k(s) \leqRlex v_k(t)$.
    \end{proof}

    \begin{defn}
        Let $G$ be an $m$-dimensional vector-valued game. For $i \in [m]$, let its \emph{$i$-th coordinate projected game}
        be defined as the real-valued game $G_i$ with the same strategies as $G$, and payoffs
        $u^i_k(s) \coloneqq (u_k(s))_i$ for all $k \in [n], s \in S$.
    \end{defn}

    \subsection[Existence Conditions for Mixed-Strategy Nash Equilibria in\\ Lexicographically-Ordered Games]{Existence Conditions for Mixed-Strategy Nash Equilibria in Lexicographically-Ordered Games}
    Not all ref-lex-ordered games have mixed-strategy Nash equilibria, which is an important difference from the theory of real-valued games. We will first illustrate this with an example, and then work out under which circumstances such Nash equilibria \emph{do} exist.
    
    \newcommand{\Gproj}[1]{G_{#1}}
    \newcommand{\Gprojsub}[1]{\bar{G}_{#1}}
    
    \begin{ex}
        \label{ex:reflectedLexicographicallyOrderedGameWithoutEquilibria}
        The following game is an example of a ref-lex-ordered game:
        \begin{gather}
            \centering
            \begin{tabular}{c|c|c|}
            	      &        $b_1$        &        $b_2$        \\ \hline
            	$a_1$ & (0.1,\; 0.8,\; 0.1) & (0.1,\; 0.7,\; 0.2) \\ \hline
            	$a_2$ & (0.6,\; 0.1,\; 0.3) & (0.8,\; 0.1,\; 0.1) \\ \hline
            \end{tabular}
        \end{gather}
        The game is a two-player matrix game (i.e. zero-sum), so we only specify payoffs for the first player.
        An isomorphic tail-ordered distribution-valued game can be constructed from it for any specified support set of size 3 that lies in $[1, \infty)$,
        and which specific support is chosen does not matter for the tail order.
        
        This game has no Nash equilibria with respect to $\leqRlex$.
        To see this, we will first take a closer look at the highest coordinate, projecting the game to it: Consider the projected game
        $\Gproj{3}$ represented by $\smallmat{0.1 & 0.2 \\ 0.3 & 0.1}$, which is a real-valued zero-sum game that we can solve with standard techniques.
        
        Computations done with the game-theoretic library included in the mathematical software {SageMath} (which implements the enumeration support algorithm from Section \ref{subsec:exactComputationNashEquilibriaSupportEnumerationAlgorithm}, see \cite{bib:sageNormalFormGameDocumentation})
        show that $\Gproj{3}$ has exactly one Nash equilibrium, with mixed strategies $s_1 = \bigpars{\frac{2}{3}, \frac{1}{3}}$ for player 1 and $s_2 = \bigpars{\frac{1}{3}, \frac{2}{3}}$ for player 2, yielding a payoff of $\frac{1}{6}$ to player 1.
        Applying this strategy to our original game $G$ yields the payoff vector $u = \bigpars{\frac{28}{90}, \frac{47}{90}, \frac{1}{6}}$.
        From Corollary \ref{cor:equilibriumStrategiesSupportHaveEqualPayoffs} we know that in the real-valued game $\Gproj{3}$, if any player unilaterally deviates from this strategy profile, the payoff stays the same: 
        For example, if player 1 plays the pure strategy $(1, 0)$ and player 2 sticks with $s_2$, the payoff for player 1 will still be $\frac{1}{6}$.
        However the payoff does not stay the same in the vector-valued game $G$:
        With the new strategy of player 1, the payoff is $\bigpars{\frac{9}{90}, \frac{66}{90}, \frac{1}{6}} \greaterRlex u$, making her payoff more preferable with respect to $\leqRlex$.
        If instead player 1 keeps her strategy $s_1$ and player 2 plays the pure strategy $(0, 1)$, the payoff for player 1 becomes $\bigpars{\frac{30}{45}, \frac{45}{90}, \frac{1}{6}} \lessRlex u$, which is more preferable for player 2.
        Therefore, $(s_1, s_2)$ is not a Nash equilibrium for $G$.
            
        Also, there can be no other Nash equilibria, as $(s_1, s_2)$ is the only Nash equilibrium for the projected game $\Gproj{3}$: Assume there is another set of strategies $(t_1, t_2) \neq (s_1, s_2)$ which \emph{is} a Nash equilibrium for $G$. But since $(t_1, t_2)$ is not a Nash equilibrium for $\Gproj{3}$, there is an incentive for some player to deviate from $(t_1, t_2)$ which improves their outcome for $G$ in the highest coordinate, thus also improving it with respect to $\leqRlex$, which is a contradiction.        
        At the core of this problem is that the projected games in the different coordinates have different mixed-strategy Nash equilibria.
        Specifically, while $\Gproj{3}$ has the Nash equilibrium $((\frac{2}{3}, \frac{1}{3}), (\frac{1}{3}, \frac{2}{3}))$,
        $\Gproj{2}$ instead has the Nash equilibrium $((1, 0), (0, 1))$, and $\Gproj{1}$ has the Nash equilibrium $((0, 1), (1, 0))$.\qed
    \end{ex}

    Example \ref{ex:reflectedLexicographicallyOrderedGameWithoutEquilibria} already gives an idea why different Nash equilibria in different coordinates can keep ref-lex-ordered games from having Nash equilibria. The rest of this subsection formalizes and refines the conditions seen in the example.
    While the example concerned a zero-sum game, i.e. a vector-valued game with respect to the orders ${(\leqRlex,\geqRlex)}$ (see Definition \ref{def:zeroSumGeneralizedPayoffs}), we will instead focus on games with respect to ${(\leqRlex,\leqRlex)}$ for simplicity. This is not an essential difference: 
    For any vector-valued bimatrix game $G$, we can define $\hat{G}$ as the game where the payoffs of player 2 are negated -- then $G_{{(\leqRlex,\geqRlex)}}$ and $\hat{G}_{{(\leqRlex,\leqRlex)}}$ are isomorphic, and we can apply the following results to $\hat{G}_{{(\leqRlex,\leqRlex)}}$ and thereby obtain information about the Nash equilibria of $G_{{(\leqRlex,\geqRlex)}}$.
    The same approach of course works for $G_{{(\geqRlex,\leqRlex)}}$, which captures the assumption of loss distributions instead of payoff distributions made in \cite{bib:rassGameRiskManagI}. 
    Also, we use the \emph{reflected} lexicographic order instead of the usual lexicographic order merely because it corresponds to the tail order more naturally; all results in this section can be rephrased for vector-valued games ordered by $\leqLex$.
    Recall from Definition \ref{def:distributionValuedGame} that we write $G_{\leqRlex}$ as a short notation for $G_{(\leqRlex, \leqRlex)}$.
    
    \begin{lemma}
        \label{lem:GmHasAllNashEquilibriaOfG}
        Let $G$ be a vector-valued bimatrix game with values in $\R^m$.
        Then any Nash equilibrium $(s_1, s_2)$ of $G_{\leqRlex}$ is also a Nash equilibrium of $\Gproj{m}$.
    \end{lemma}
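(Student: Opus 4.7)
The natural approach is proof by contrapositive: I will show that if $(s_1, s_2)$ fails to be a Nash equilibrium of $\Gproj{m}$, then it also fails to be a Nash equilibrium of $G_{\leqRlex}$.

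Suppose then that $(s_1, s_2)$ is not a Nash equilibrium of $\Gproj{m}$. By the definition of Nash equilibria in real-valued games, there exists some player $k \in \set{1, 2}$ and a strategy $\tilde{s}_k$ for that player such that $u^m_k(\tilde{s}_k, s_{-k}) > u^m_k(s_k, s_{-k})$, where $u^m_k$ denotes the $m$-th coordinate of the vector-valued payoff $u_k$. Without loss of generality, assume $k = 1$ (the argument for $k = 2$ is identical).

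Now I would invoke the definition of $\leqRlex$. Writing $x \coloneqq u_1(s_1, s_2)$ and $y \coloneqq u_1(\tilde{s}_1, s_2)$, we have $x_m < y_m$ by assumption. Since there are no indices $i > m$ in $[m]$, the condition $\exists k \in [m]: x_k < y_k \wedge x_i = y_i~\forall i > k$ is satisfied by taking $k = m$ (with the universal quantifier over the empty set being vacuously true). Hence $x \lessRlex y$, i.e.\ $u_1(s_1, s_2) \lessRlex u_1(\tilde{s}_1, s_2)$. Because $\leqRlex$ is antisymmetric and total, $x \lessRlex y$ rules out $x \geqRlex y$, which directly contradicts the Nash equilibrium condition \eqref{eq:nashEquilibriumGeneralizedPayoffs} for player 1 in $G_{\leqRlex}$.

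I do not expect any real obstacle here: the entire argument rests on the single observation that $\leqRlex$ resolves comparisons by looking at the highest coordinate first, so a strict gain in the $m$-th coordinate automatically produces a strict $\leqRlex$-gain, irrespective of the lower coordinates. The only care needed is in applying the definition of $\lessRlex$ correctly (with an empty universal quantifier at $k = m$) and in citing antisymmetry of $\leqRlex$ to conclude that strict $\lessRlex$-dominance negates the Nash equilibrium inequality.
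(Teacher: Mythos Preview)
Your proof is correct and follows essentially the same contrapositive argument as the paper: assume $(s_1,s_2)$ is not a Nash equilibrium of $\Gproj{m}$, obtain a deviation with strictly larger $m$-th coordinate, and conclude that this deviation is strictly $\leqRlex$-preferred, contradicting the Nash condition for $G_{\leqRlex}$. The paper's version is slightly terser, but the content is identical.
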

    \begin{proof}
        Suppose $\Gproj{m}$ does not have $(s_1, s_2)$ as a Nash equilibrium.
        Then one of the players, say (without loss of generality) player 1, has an incentive to deviate in $\Gproj{m}$:
        This means that for some strategy $\tilde{s}_1 \in S_1$, $u_1^{m}(\tilde{s}_1, s_2) > u_1^{m}(s_1, s_2)$.
        But then for the payoffs in $G$, 
        \begin{gather*}
            u_1(\tilde{s}_1, s_2) = \bigpars{u_1^{1}(\tilde{s}_1, s_2), \dots, u_1^{m}(\tilde{s}_1, s_2)}
            \greaterRlex \bigpars{u_1^{1}(s_1, s_2), \dots, u_1^{m}(s_1, s_2)} = u_1(s_1, s_2).
        \end{gather*}
    \end{proof}

    \begin{cor}
        Let $G$ be a vector-valued bimatrix game with values in $\R^m$, and suppose that  $\Gproj{m}$ has exactly one Nash equilibrium $(s_1, s_2)$.
        Then
        \begin{gather*}
            G_{\leqRlex} \text{ has a Nash equilibrium } 
            \lra\quad G_{\leqRlex} \text{ has the Nash equilibrium } (s_1, s_2).   
        \end{gather*}
    \end{cor}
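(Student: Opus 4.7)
The plan is to observe that this corollary is an essentially immediate consequence of Lemma \ref{lem:GmHasAllNashEquilibriaOfG}. The ``$\Leftarrow$'' direction of the stated equivalence is tautological: if $G_{\leqRlex}$ has $(s_1, s_2)$ as a Nash equilibrium, then in particular it has a Nash equilibrium. So all the work lies in the ``$\Rightarrow$'' direction, and even that is short.

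For the forward direction, I would suppose that $G_{\leqRlex}$ has some Nash equilibrium, say $(t_1, t_2) \in \Delta$. By Lemma \ref{lem:GmHasAllNashEquilibriaOfG}, $(t_1, t_2)$ is then also a Nash equilibrium of the $m$-th coordinate projected game $\Gproj{m}$. By hypothesis, however, $\Gproj{m}$ has exactly one Nash equilibrium, namely $(s_1, s_2)$. Uniqueness forces $(t_1, t_2) = (s_1, s_2)$, so the Nash equilibrium of $G_{\leqRlex}$ that we assumed to exist must be $(s_1, s_2)$ itself.

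There is no genuine obstacle here; the substantive content was already packed into Lemma \ref{lem:GmHasAllNashEquilibriaOfG}, which set up the contrapositive argument that deviation improving the top-coordinate payoff also improves the ref-lex-order payoff. The corollary just combines that lemma with the uniqueness hypothesis on $\Gproj{m}$. The only thing worth being careful about in the write-up is not to confuse ``$G_{\leqRlex}$ has the Nash equilibrium $(s_1, s_2)$'' (an assertion about a specific profile) with ``$G_{\leqRlex}$ has a Nash equilibrium'' (an existential statement); the equivalence holds precisely because uniqueness in $\Gproj{m}$ collapses the existential to the specific one via the lemma.
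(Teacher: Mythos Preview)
Your proposal is correct and matches the paper's approach: the corollary is stated without proof in the paper, as it follows immediately from Lemma~\ref{lem:GmHasAllNashEquilibriaOfG} combined with the uniqueness hypothesis, exactly as you argue.
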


    \begin{thm}
        \label{thm:rlexGameProjectedGameSufficientCondition}
        Let $G$ be a vector-valued bimatrix game with values in $\R^m$.
        Suppose that $\Gproj{m}$ has the 
        Nash equilibrium $(s_1, s_2)$.    
        Then the following implication holds:
        \begin{align}
            \label{eq:allProjectedGamesHaveTheNashEquilibrium}
                            &\forall i \in [m-1]: \Gproj{i} \text{ has the Nash equilibrium } (s_1, s_2) \\
            \Rightarrow\quad &G_{\leqRlex} \text{ has the Nash equilibrium } (s_1, s_2).
        \end{align}
    \end{thm}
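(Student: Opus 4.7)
The plan is to verify the Nash equilibrium property coordinate by coordinate, and then reassemble the coordinatewise information using the definition of $\leqRlex$. Fix a player $k \in \{1, 2\}$ and an arbitrary deviation $\tilde{s}_k \in S_k$; the goal is to show $u_k(\tilde{s}_k, s_{-k}) \leqRlex u_k(s_k, s_{-k})$.

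First I would use the hypothesis directly: by assumption, $(s_1, s_2)$ is a Nash equilibrium of $\Gproj{i}$ for every $i \in [m]$ (for $i = m$ this is the standing assumption, for $i < m$ it is the antecedent of the implication). By the real-valued Nash equilibrium definition applied to each $\Gproj{i}$, this yields the key inequality
\begin{gather*}
    \forall i \in [m]: \quad u_k^i(\tilde{s}_k, s_{-k}) \leq u_k^i(s_k, s_{-k}).
\end{gather*}
So the candidate deviation $\tilde{s}_k$ is weakly worse in every single coordinate.

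The second step is to translate this coordinatewise dominance into a $\leqRlex$-comparison. Let $I \coloneqq \{i \in [m] : u_k^i(\tilde{s}_k, s_{-k}) \neq u_k^i(s_k, s_{-k})\}$. If $I = \emptyset$, the two payoff vectors are equal and $u_k(\tilde{s}_k, s_{-k}) \leqRlex u_k(s_k, s_{-k})$ holds by reflexivity. Otherwise let $j \coloneqq \max I$; then $u_k^i(\tilde{s}_k, s_{-k}) = u_k^i(s_k, s_{-k})$ for all $i > j$, and the coordinatewise inequality at index $j$ together with $j \in I$ forces $u_k^j(\tilde{s}_k, s_{-k}) < u_k^j(s_k, s_{-k})$. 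This is exactly the condition in the definition of $\leqRlex$ that gives $u_k(\tilde{s}_k, s_{-k}) \lessRlex u_k(s_k, s_{-k})$. Since $k$ and $\tilde{s}_k$ were arbitrary, $(s_1, s_2)$ satisfies \eqref{eq:nashEquilibriumGeneralizedPayoffs} for $G_{\leqRlex}$.

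I do not anticipate a real obstacle here: the proof is essentially bookkeeping about how $\leqRlex$ is defined, and the whole content of the theorem lies in the hypothesis that \emph{the same} profile $(s_1, s_2)$ is an equilibrium for all $m$ projected games. The one subtlety to flag in the write-up is that the converse of this theorem fails in general (as Example \ref{ex:reflectedLexicographicallyOrderedGameWithoutEquilibria} illustrates): requiring $(s_1, s_2)$ to be an equilibrium in every $\Gproj{i}$ is a strong sufficient condition, not a necessary one, because deviations that are strictly worse in a high coordinate can mask deviations that are strictly better in a lower coordinate.
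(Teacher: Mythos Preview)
Your proof is correct and follows essentially the same approach as the paper: derive the coordinatewise inequality $u_k^i(\tilde{s}_k, s_{-k}) \leq u_k^i(s_k, s_{-k})$ from each projected equilibrium, then conclude $\leqRlex$ (you spell out the last step more explicitly than the paper does, which is fine). One small correction to your closing remark: the example illustrating that the converse fails is Example~\ref{ex:allProjectedEquilibriaAreEqualIsNotNecessary}, not Example~\ref{ex:reflectedLexicographicallyOrderedGameWithoutEquilibria}---the latter has no $\leqRlex$-equilibrium at all and so cannot witness a failure of the converse.
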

    \begin{proof}
        Assume that \eqref{eq:allProjectedGamesHaveTheNashEquilibrium} holds.
        Let $k \in [n]$ be an arbitrary player, and $\tilde{s}_k \in S_k$ an alternative strategy.
        Then for each projected game $\Gproj{i}$, because $(s_1, s_2)$ is a Nash equilibrium of $\Gproj{i}$, we have $u_k^{i}(\tilde{s}_k, s_{-k}) \leq u_k^{i}(s_k, s_{-k})$.
        Therefore in $G$,
        \begin{multline*}
            u_k(\tilde{s}_k, s_{-k}) 
            = \bigpars{u_k^{1}(\tilde{s}_k, s_{-k}), \dots, u_k^{m}(\tilde{s}_k, s_{-k})} \\
            \leqRlex \bigpars{u_k^{1}(s_k, s_{-k}), \dots, u_k^{m}(s_k, s_{-k})}
            = u_k(s_k, s_{-k}).
        \end{multline*}
    \end{proof}
    In Example \ref{ex:reflectedLexicographicallyOrderedGameWithoutEquilibria}, the reason why the game did not have a Nash equilibrium was that the highest-coordinate and the second-highest-coordinate projections had different Nash equilibria.
    Keeping this in mind, it seems as if \eqref{eq:allProjectedGamesHaveTheNashEquilibrium} may not only be a sufficient, but also a necessary condition for the existence of Nash equilibria. This is not the case, however: There is a similar, but weaker necessary condition that ref-lex-games with Nash equilibria need to satisfy. The next example illustrates how a ref-lex-game can have a Nash equilibrium even though the mixed-strategy equilibrium in the highest-coordinate projection is not reflected in the second-highest coordinate.
    \begin{ex}
        \label{ex:allProjectedEquilibriaAreEqualIsNotNecessary}
        Consider the ref-lex-ordered bimatrix game $G$ with the following payoffs for player 1/player 2, respectively:
        \begin{gather*}
            \centering
            \begin{tabular}{c|c|c|c|}
            	      & $b_1$  & $b_2$  & $b_3$  \\ \hline
            	$a_1$ & (1, 1) & (2, 2) & (3, 3) \\ \hline
            	$a_2$ & (2, 2) & (1, 1) & (3, 3) \\ \hline
            	$a_3$ & (3, 0) & (3, 0) & (3, 3) \\ \hline
            \end{tabular}\qquad
            \begin{tabular}{c|c|c|c|}
            	      &  $b_1$  &  $b_2$  &  $b_3$  \\ \hline
            	$a_1$ & (1, -1) & (2, -2) & (3, -3) \\ \hline
            	$a_2$ & (2, -2) & (1, -1) & (3, -3) \\ \hline
            	$a_3$ & (3, 0)  & (3, 0)  & (3, -3) \\ \hline
            \end{tabular}
        \end{gather*}
        The projected game $\Gproj{2}$ is a zero-sum game with payoffs $\pm\smallmat{1 & 2 & 3 \\ 2 & 1 & 3 \\ 0 & 0 & 3}$, and in the projected game $\Gproj{1}$ both players have the same payoff matrix $\smallmat{1 & 2 & 3 \\ 2 & 1 & 3 \\ 3 & 3 & 3}$.
        It can be computed that the only Nash equilibrium for $\Gproj{2}$ is $(s_1, s_2) \coloneqq \bigpars{(\frac{1}{2}, \frac{1}{2}, 0), (\frac{1}{2}, \frac{1}{2}, 0)}$.
        This is not a Nash equilibrium for $\Gproj{1}$, since for any player, deviating to $(0, 0, 1)$ gives a better payoff (the last row/column have strictly higher payoffs for both players).
        However $(s_1, s_2)$ indeed \emph{is} a Nash equilibrium for $G$:
        Any deviation that does not mix in the last row or column (for example, if player 1 deviates to $(0.3, 0.7, 0)$) will keep the payoffs the same for both players in both coordinates; any deviation that does mix in the last row or column will make the payoff for the respective player worse in the highest coordinate, and therefore also with respect to $\leqRlex$. \qed
    \end{ex}

    This example shows the obstacle when trying to turn \eqref{eq:allProjectedGamesHaveTheNashEquilibrium} into an equivalence.
    More or less, the idea applied in Example \ref{ex:reflectedLexicographicallyOrderedGameWithoutEquilibria} was that player 1 deviated from the highest-coordinate Nash equilibrium $(s_1, s_2)$, but the new strategy still mixed between the pure strategies in $\supp(s_1)$. That way, the payoff in the highest coordinate stayed the same while the second-coordinate payoff improved.
    However in Example \ref{ex:allProjectedEquilibriaAreEqualIsNotNecessary}, only deviating within the support of $s_1$ (or $s_2$ for player 2) of course keeps the high-coordinate payoff the same, but the low-coordinate payoff can only be improved by playing a strategy outside of the support.
    So although the low-coordinate game $\Gproj{1}$ does not have $(s_1, s_2)$ as a Nash equilibrium, the equilibrium still holds for $G$, as no deviation that mixes only between the strategies in $\supp(s_1)$ gives an $\leqRlex$-better payoff.
    With this observation, we can get the aforementioned weaker necessary condition by restricting the lower-coordinate projections to subgames: We take out the rows and columns which are outside the support of the highest-coordinate Nash equilibrium.
    
    \begin{defn}
        Let $G = (n, (\Delta_1, \dots, \Delta_n), (u_1, \dots, u_n))$ be a real-valued game with pure-strategy sets $S_1, \dots, S_n$. For each $k \in [n]$, let $\bar{S_k} \subseteq S_k$ be a subset of the $k$-th player's strategies. Then the \emph{subgame} corresponding to these strategy sets is the game $\bar{G} \coloneqq (n, (\bar{S_1}, \dots, \bar{S_n}), (\bar{u}_1, \dots, \bar{u}_n))$, where $\bar{S} \coloneqq \bigtimes_{k \in [n]} \bar{S}_k$, and for all $k \in [n]$, $\bar{u}_k \coloneqq u_k\vert_{\bar{S}}$ is the restriction of $u_k$ to the new strategy space.
    \end{defn}

    \begin{notation}
        Let $G$ be an $m$-dimensional vector-valued bimatrix game with pure-strategy sets $S_1, S_2$. Let $\Gproj{m}$ have the Nash equilibrium $(s_1, s_2)$.
        In this case we write $T_1 \coloneqq \supp(s_1) \subseteq S_1, T_2 \coloneqq \supp(s_2) \subseteq S_2$ for the supports of $s_1$ and $s_2$, and $\Gprojsub{i}$ for the subgame corresponding to $T_1, T_2$ of the $i$-th coordinate projection $\Gproj{i}$.
        We also write $t_1, t_2$ for the strategies $s_1, s_2$ restricted to $T_1$ and $T_2$, respectively.
        (All of these notations depend on $(s_1, s_2)$, even though this is not explicitly written out every time for simplicity.)
    \end{notation}
    
    \begin{thm}
        \label{thm:rlexGameProjectedSubgameCharacterization}
        Let $G$ be an $m$-dimensional vector-valued bimatrix game, let $\Gproj{m}$ have the Nash equilibrium $(s_1, s_2)$.
        Then the following implication holds:
        \begin{align}
                             &G_{\leqRlex} \text{ has the Nash equilibrium } (s_1, s_2) \nonumber \\
            \Rightarrow\quad &\forall i \in [m-1]: \Gprojsub{i} \text{ has the Nash equilibrium } (t_1, t_2).
            \label{eq:allProjectedSubgamesHaveTheNashEquilibrium}
        \end{align}   
        If additionally in $\Gproj{m}$,
        all pure-strategy best responses to $s_1$ are in $T_2$, and all pure-strategy best responses to $s_2$ are in $T_1$,
        the reverse direction holds as well.
    \end{thm}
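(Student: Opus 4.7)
The plan is to prove both directions via an interplay between Lemma \ref{lem:GmHasAllNashEquilibriaOfG}, which carries the equilibrium from $G_{\leqRlex}$ down to $\Gproj{m}$, and Corollary \ref{cor:equilibriumStrategiesSupportHaveEqualPayoffs}, which forces equal payoffs across the support of any mixed Nash strategy. Throughout I use the linearity of $u_k^j(\dummydot, s_{-k})$ (Remark \ref{rem:mixedExtensionsRemark}) and argue only for player $1$, since the roles are symmetric.

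\textbf{Forward direction.} Assume $(s_1, s_2)$ is a Nash equilibrium of $G_{\leqRlex}$; by Lemma \ref{lem:GmHasAllNashEquilibriaOfG} it is also a Nash equilibrium of $\Gproj{m}$, and Corollary \ref{cor:equilibriumStrategiesSupportHaveEqualPayoffs} applied there gives $u_1^m(\tilde{s}_1, s_2) = u_1^m(s_1, s_2)$ for every $\tilde{s}_1 \in \Delta_1$ with $\supp(\tilde{s}_1) \subseteq T_1$. I would then proceed by downward induction on $i$ from $m-1$ to $1$, proving at each step that $(t_1, t_2)$ is a Nash equilibrium of $\Gprojsub{i}$. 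At step $i$ the inductive hypothesis states that $(t_1, t_2)$ is already Nash in $\Gprojsub{j}$ for every $j > i$, so Corollary \ref{cor:equilibriumStrategiesSupportHaveEqualPayoffs} applied to each of those subgames (combined with the base-case equality at $j = m$) yields $u_1^j(\tilde{s}_1, s_2) = u_1^j(s_1, s_2)$ for every $j > i$ and every $\tilde{s}_1$ supported in $T_1$. If some such $\tilde{s}_1$ satisfied $u_1^i(\tilde{s}_1, s_2) > u_1^i(s_1, s_2)$, coordinate $i$ would be the topmost coordinate in which $u_1(\tilde{s}_1, s_2)$ and $u_1(s_1, s_2)$ disagree, forcing $u_1(\tilde{s}_1, s_2) \greaterRlex u_1(s_1, s_2)$ and contradicting the Nash property in $G_{\leqRlex}$.

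\textbf{Reverse direction.} Assume additionally that every pure best response to $s_2$ in $\Gproj{m}$ lies in $T_1$ (and symmetrically for player $2$). I would verify the Nash condition for an arbitrary deviation $\tilde{s}_1 \in \Delta_1$ by cases on its support. If $\supp(\tilde{s}_1) \subseteq T_1$, then Corollary \ref{cor:equilibriumStrategiesSupportHaveEqualPayoffs} in $\Gproj{m}$ yields equality at coordinate $m$, while the Nash property of $(t_1, t_2)$ in each $\Gprojsub{i}$ yields $u_1^i(\tilde{s}_1, s_2) \leq u_1^i(s_1, s_2)$ for every $i < m$; these componentwise bounds imply $u_1(\tilde{s}_1, s_2) \leqRlex u_1(s_1, s_2)$ directly from the definition of $\leqRlex$. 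If $\supp(\tilde{s}_1) \not\subseteq T_1$, some pure strategy $s \in \supp(\tilde{s}_1)$ is not a best response to $s_2$ in $\Gproj{m}$ by the additional hypothesis, so $u_1^m(s, s_2) < u_1^m(s_1, s_2)$, while the strategies in $T_1$ achieve exactly $u_1^m(s_1, s_2)$ by the corollary; linearity then gives $u_1^m(\tilde{s}_1, s_2) < u_1^m(s_1, s_2)$, hence $u_1(\tilde{s}_1, s_2) \lessRlex u_1(s_1, s_2)$.

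The main subtlety lies in the forward induction: ruling out a coordinate-$i$ improvement requires knowing that the higher coordinates remain \emph{equal}, not merely unchanged in sign of deviation, because $\leqRlex$ only discriminates at the topmost coordinate of disagreement. Equality at coordinate $j$ for all deviations supported in $T_1$ is exactly what Corollary \ref{cor:equilibriumStrategiesSupportHaveEqualPayoffs} delivers once $(t_1, t_2)$ is known to be Nash in $\Gprojsub{j}$, which is why the induction must run downward from $m-1$ and cannot be done coordinate-by-coordinate in any other order.
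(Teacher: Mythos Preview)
Your proposal is correct and follows essentially the same approach as the paper. The paper argues the forward direction by contrapositive, picking the \emph{maximal} index $l$ at which $(t_1,t_2)$ fails to be Nash in $\Gprojsub{l}$ and then exhibiting an $\leqRlex$-improving deviation; your downward induction is the same argument in different clothing, and both rely on exactly the same fact (equality of higher-coordinate payoffs for deviations supported in $T_1$, via Corollary~\ref{cor:equilibriumStrategiesSupportHaveEqualPayoffs}). Your reverse direction matches the paper's case split on $\supp(\tilde{s}_1)\subseteq T_1$ versus $\supp(\tilde{s}_1)\not\subseteq T_1$ verbatim. One minor remark: invoking Lemma~\ref{lem:GmHasAllNashEquilibriaOfG} in the forward direction is redundant, since $(s_1,s_2)$ being Nash in $\Gproj{m}$ is already a hypothesis of the theorem.
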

    \begin{proof}
        We use the contrapositive to show “$\Rightarrow$”. Assume \eqref{eq:allProjectedSubgamesHaveTheNashEquilibrium} does not hold, and let $l$ be the maximal index such that $\Gproj{l}$ does not have the Nash equilibrium $(t_1, t_2)$.
        Then one player $k$ has an improving strategy $\tilde{t}_k \in T_k$, i.e.
        $\bar{u}_k^l(\tilde{t}_k, t_{-k}) > \bar{u}_k^l(t_k, t_{-k})$.
        Furthermore since $l$ is maximal, for all $i \in \set{l+1, \dots, m}$, $\Gprojsub{i}$ has $(t_1, t_2)$ as a Nash equilibrium.
        So by Theorem \ref{thm:bestResponseMixing}, $\bar{u}_k^i(\tilde{t}_k, t_{-k}) = \bar{u}_k^i(t_k, t_{-k})$.
        
        Denote by $\tilde{s}_k \in S_k$ the strategy for $G$ corresponding to the restricted strategy $\tilde{t}_k$.
        Obviously for all $i \in [m-1]$, $\bar{u}_k^i(\tilde{t}_k, t_{-k}) = u_k^i(\tilde{s}_k, s_{-k})$.
        Since all projected-game payoffs for $i > l$ stay equal by switching from $s_k$ to $\tilde{s}_k$, but the $l$-th payoff increases, we have:
        \begin{multline*}
                         u_k(\tilde{s}_k, s_{-k}) 
                       = \bigpars{u_k^{1}(\tilde{s}_k, s_{-k}), \dots, u_k^l(\tilde{s}_k, s_{-k}), \dots, u_k^{m}(\tilde{s}_k, s_{-k})} \\
            \greaterRlex \bigpars{u_k^{1}(s_k, s_{-k}), \dots, u_k^l(s_k, s_{-k}), \dots, u_k^{m}(s_k, s_{-k})}
                       = u_k(s_k, s_{-k}).
        \end{multline*}
        So $(s_1, s_2)$ is not a Nash equilibrium of $G$.
        
        For “$\Leftarrow$”, assume that the additional condition holds.
        Without loss of generality, consider an alternative strategy $\tilde{s}_1$ for player 1, for which we want to show $u_1(\tilde{s}_1, s_2) \leqRlex u_1(s_1, s_2)$.
        If $\supp (\tilde{s}_1) \nsubseteq T_1$, by the additional condition, $\tilde{s}_1$ has in its support a non-best-response strategy to $s_2$ in $\Gproj{m}$, so it cannot be a best response by Theorem \ref{thm:bestResponseMixing}. Therefore $u_1^m(\tilde{s}_1, s_2) < u_1^m(s_1, s_2)$.
        If instead $\supp (\tilde{s}_1) \subseteq T_1$, and $\tilde{t}_1 \in T_1$ denotes the corresponding restricted strategy,
        we have $u_1^m(\tilde{s}_1, s_2) = u_1^m(s_1, s_2)$, and  $\forall i \in [m - 1]: u_1^i(\tilde{s}_1, s_2) = \bar{u}_1^i(\tilde{t}_1, t_2) \leq \bar{u}_1^i(t_1, t_2) = u_1^i(s_1, s_2)$ because $(t_1, t_2)$ is a Nash equilibrium in every $\Gprojsub{i}$. Therefore $u_1(\tilde{s}_1, s_2) \leqRlex u_1(s_1, s_2)$.
    \end{proof}

    The additional condition for equivalence holds in particular if $\Gproj{m}$ is non-degenerate (see Definition \ref{def:degenerateRealValuedGame}).
    \begin{cor}        
        \label{cor:degenerateGamesYieldLeqRlexEquivalence}
        Let $G$ be an $m$-dimensional vector-valued bimatrix game, let $\Gproj{m}$ have the Nash equilibrium $(s_1, s_2)$ and be non-degenerate.
        Then $G_{\leqRlex}$ has the Nash equilibrium $(s_1, s_2)$ if and only if for all $i \in [m-1]$, $\Gprojsub{i}$ has the Nash equilibrium $(t_1, t_2)$.
    \end{cor}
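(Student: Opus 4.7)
The corollary amounts to deducing the ``additional condition'' of Theorem \ref{thm:rlexGameProjectedSubgameCharacterization} from non-degeneracy of $\Gproj{m}$, since once that condition is in hand, the equivalence is already proved. So my plan is to show: if $\Gproj{m}$ is non-degenerate and has $(s_1, s_2)$ as a Nash equilibrium, then every pure best response to $s_1$ in $\Gproj{m}$ lies in $T_2 = \supp(s_2)$, and symmetrically every pure best response to $s_2$ lies in $T_1 = \supp(s_1)$.

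The key step combines two counts of best responses. First, by Corollary \ref{cor:equilibriumStrategiesSupportHaveEqualPayoffs} applied to the real-valued game $\Gproj{m}$, every pure strategy in $T_2$ is a best response to $s_1$; so player~2 has at least $|T_2|$ pure best responses to $s_1$. Second, by Definition \ref{def:degenerateRealValuedGame}, non-degeneracy of $\Gproj{m}$ applied to the mixed strategy $s_1$ (of support size $|T_1|$) gives that player~2 has at most $|T_1|$ pure best responses to $s_1$. Combining the two: $|T_2| \leq |T_1|$. By the symmetric argument with the roles of the players swapped, $|T_1| \leq |T_2|$, so $|T_1| = |T_2|$.

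Now the chain of inequalities collapses: the number of player~2's pure best responses to $s_1$ is sandwiched between $|T_2|$ and $|T_1| = |T_2|$, hence equals $|T_2|$. Since $T_2$ already consists of $|T_2|$ many such best responses, $T_2$ must exhaust all of them; analogously for $T_1$. This is exactly the extra hypothesis in the equivalence part of Theorem \ref{thm:rlexGameProjectedSubgameCharacterization}, so applying that theorem yields the desired equivalence.

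There is no real obstacle here; the argument is a short counting argument using Theorem \ref{thm:bestResponseMixing} (to guarantee that support strategies of an equilibrium are best responses) together with the definition of non-degeneracy (to cap the number of pure best responses). The only thing to be a little careful about is ensuring that the non-degeneracy hypothesis is applied to \emph{both} players in the required direction, which is why the symmetric argument giving $|T_1| \leq |T_2|$ is needed in addition to $|T_2| \leq |T_1|$.
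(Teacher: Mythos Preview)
Your proof is correct and follows essentially the same approach as the paper: both derive $|T_1| = |T_2|$ via the two-sided counting argument (support strategies are best responses, and non-degeneracy caps the number of best responses), then conclude that $T_1$ and $T_2$ exhaust the pure best responses, which is precisely the additional hypothesis needed in Theorem~\ref{thm:rlexGameProjectedSubgameCharacterization}. The only cosmetic difference is that the paper cites Theorem~\ref{thm:bestResponseMixing} directly rather than Corollary~\ref{cor:equilibriumStrategiesSupportHaveEqualPayoffs}.
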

    \begin{proof}
        Since $G_m$ is non-degenerate, $s_1$ has at most $\abs{T_1}$ pure-strategy best responses, and $s_2$ has at most $\abs{T_2}$ pure-strategy best responses.
        By Theorem \ref{thm:bestResponseMixing}, all strategies in $T_2$ are best responses to $s_1$, and all strategies in $T_1$ are best responses to $s_2$. Therefore $\abs{T_2} \leq \abs{T_1}$ and $\abs{T_1} \leq \abs{T_2}$, which implies $\abs{T_1} = \abs{T_2}$.
        Since neither $s_1$ nor $s_2$ can have more than $\abs{T_1} = \abs{T_2}$ pure best responses, the additional condition of Theorem \ref{thm:rlexGameProjectedSubgameCharacterization} is satisfied, which yields the equivalence.
    \end{proof}

    If $\Gproj{m}$ is non-degenerate, we can also characterize \emph{pure-strategy} Nash equilibria of $G_{\leqRlex}$.
    \begin{cor}
        \label{cor:degenerateGamesYieldPureEquilibriaGmGEquivalence}
        Let $G$ be an $m$-dimensional vector-valued bimatrix game. If $\Gproj{m}$ is non-degenerate and $s_1, s_2$ are pure strategies, the following equivalence holds:
        \begin{gather*}
            \text{$G_{\leqRlex}$ has the Nash equilibrium $(s_1, s_2) \lra \Gproj{m}$ has the Nash equilibrium $(s_1, s_2)$.}
        \end{gather*}
    \end{cor}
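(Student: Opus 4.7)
The plan is to prove both directions by invoking results already established earlier in this section, with essentially no new work needed. First I would dispatch the forward direction ($\Rightarrow$) using Lemma \ref{lem:GmHasAllNashEquilibriaOfG}, which states directly that any Nash equilibrium of $G_{\leqRlex}$ is also a Nash equilibrium of $\Gproj{m}$; so for this direction there is nothing more to argue.

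For the backward direction ($\Leftarrow$), I would apply Corollary \ref{cor:degenerateGamesYieldLeqRlexEquivalence}. Assume $\Gproj{m}$ has the Nash equilibrium $(s_1, s_2)$; since $\Gproj{m}$ is non-degenerate by hypothesis, the corollary reduces the question of whether $(s_1, s_2)$ is a Nash equilibrium of $G_{\leqRlex}$ to the question of whether the restricted strategies $(t_1, t_2)$ form a Nash equilibrium in each of the projected subgames $\Gprojsub{i}$ for $i \in [m-1]$. The key observation I would then make is that because $s_1, s_2$ are pure, the supports $T_1 = \supp(s_1) = \set{s_1}$ and $T_2 = \supp(s_2) = \set{s_2}$ are singletons. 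Hence each subgame $\Gprojsub{i}$ has exactly one strategy available to each player, and $(t_1, t_2)$ is vacuously a Nash equilibrium there, as no player has any alternative strategy to which to deviate. The corollary then yields that $(s_1, s_2)$ is a Nash equilibrium of $G_{\leqRlex}$.

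There is no real obstacle here; both ingredients are already on the table. The only item to verify carefully is that invoking Corollary \ref{cor:degenerateGamesYieldLeqRlexEquivalence} is legitimate — i.e.\ that the non-degeneracy hypothesis is precisely what allows one to use the equivalence rather than only the one-sided implication of Theorem \ref{thm:rlexGameProjectedSubgameCharacterization}. Without non-degeneracy, the backward direction could fail because pure-strategy best responses to $s_1$ or $s_2$ in $\Gproj{m}$ might lie outside the singleton supports $T_1, T_2$, leaving room for a profitable pure-strategy deviation whose $m$-th coordinate payoff ties $(s_1,s_2)$ but whose lower-coordinate payoff is strictly larger.
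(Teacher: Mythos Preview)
Your proposal is correct and matches the paper's proof essentially line for line: the forward direction via Lemma \ref{lem:GmHasAllNashEquilibriaOfG}, and the backward direction via Corollary \ref{cor:degenerateGamesYieldLeqRlexEquivalence} together with the observation that singleton supports make each $\Gprojsub{i}$ a one-strategy-per-player game in which $(t_1,t_2)$ is trivially an equilibrium. Your closing remark about why non-degeneracy is needed is a nice addition but not present in the paper's version.
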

    \begin{proof}
        “$\Rightarrow$” holds by Lemma \ref{lem:GmHasAllNashEquilibriaOfG}.
        For “$\Leftarrow$”, we use Corollary \ref{cor:degenerateGamesYieldLeqRlexEquivalence}: Since $\abs{\supp s_1} = \abs{\supp s_2} = 1$, all subgames $\Gprojsub{i}, i \in [m-1]$ are games with only one strategy for each player. They therefore trivially all have $(t_1, t_2)$ as Nash equilibrium.
        Therefore $G_{\leqRlex}$ has the Nash equilibrium $(s_1, s_2)$.
    \end{proof}

    The additional condition in Theorem \ref{thm:rlexGameProjectedSubgameCharacterization} cannot be left out, as the next example shows.
    \begin{ex}
        Consider the ref-lex-game with the following player 1/player 2 payoffs:
        \begin{gather*}
            \centering
            \begin{tabular}{c|c|c|c|}
            	      & $b_1$  & $b_2$  &  $b_3$  \\ \hline
            	$a_1$ & (0, 1) & (0, 2) & (0, -1) \\ \hline
            	$a_2$ & (0, 2) & (0, 1) & (0, -2) \\ \hline
            	$a_3$ & (1, 2) & (1, 1) & (1, -3) \\ \hline
            \end{tabular}\qquad
            \begin{tabular}{c|c|c|c|}
            	      &  $b_1$   &  $b_2$   &  $b_3$  \\ \hline
            	$a_1$ &  (0, 2)  &  (0, 1)  & (0, 0)  \\ \hline
            	$a_2$ &  (0, 1)  &  (0, 2)  & (0, 0)  \\ \hline
            	$a_3$ & (-1, -1) & (-1, -1) & (-1, 0) \\ \hline
            \end{tabular}
        \end{gather*}
        \sloppypar{
        The game $\Gproj{2}$ has payoffs $\smallmat{1 & 2 & -1 \\ 2 & 1 & -2 \\ 2 & 1 & -3} / \smallmat{\phantom{-}2 & \phantom{-}1 & 0 \\ \phantom{-}1 & \phantom{-}2 & 0 \\ -1 & -1 & 0}$,
        and the unique Nash equilibrium is $(s_1, s_2) = \bigpars{(\frac{1}{2}, \frac{1}{2}, 0), (\frac{1}{2}, \frac{1}{2}, 0)}$. However not just the first and second rows are pure best responses of player 1 to $s_2$, but the third row is a best response as well.
        The game $\Gproj{1}$ is a simple zero-sum game with payoffs $\pm \smallmat{0 & 0 & 0 \\ 0 & 0 & 0 \\ 1 & 1 & 1}$, where the third row is obviously preferable for player 1, and player 2 has no choice over the outcomes.
        In the projected subgame $\Gprojsub{1}$ with payoffs $\pm \smallmat{0 & 0 \\ 0 & 0}$, $(t_1, t_2) = \bigpars{(\frac{1}{2}, \frac{1}{2}), (\frac{1}{2}, \frac{1}{2})}$ is clearly a Nash equilibrium, so \eqref{eq:allProjectedSubgamesHaveTheNashEquilibrium} is satisfied.
        However in $G$, $s_1$ is not a best response to $s_2$ (giving player 1 a payoff of $(0, 1.5)$), since the strategy $(0, 0, 1)$ gives an $\leqRlex$-better payoff of $(1, 1.5)$, so $(s_1, s_2)$ is not a Nash equilibrium of $G$.
        }\qed
    \end{ex}

    To summarize our results: Any Nash equilibrium of a ref-lex-game $G$ must be a Nash equilibrium of its highest-coordinate game $\Gproj{m}$.
    If $\Gproj{m}$ has a Nash equilibrium $(s_1, s_2)$, then the following chain of implications holds:
    \begin{align*}
         &\forall i \in [m - 1]: \Gproj{i} \text{ has the Nash equilibrium } (s_1, s_2) \\
         \Rightarrow\quad &G_{\leqRlex} \text{ has the Nash equilibrium } (s_1, s_2) \\
         \Rightarrow\quad &\forall i \in [m-1]: \Gprojsub{i} \text{ has the Nash equilibrium } (t_1, t_2).
    \end{align*}
    The second implication clearly places strict requirements on ref-lex-games to have mixed-strategy Nash equilibria. For any such game $G_{\leqRlex}$, there are two possibilities: Either $G_m$, and therefore $G_{\leqRlex}$, has a pure Nash equilibrium. Otherwise if $G_m$ only has a mixed Nash equilibrium, the same equilibrium must be supported by all projected subgames $\Gprojsub{i}$ in order for $G_{\leqRlex}$ to have a Nash equilibrium at all.

    \subsubsection{Nash Equilibria in Distribution-Valued Tail-Order Games}
    These results for ref-lex-ordered games have the important consequence for distribution-valued games that even those distribution-valued games whose payoff distributions have only finite support do not have Nash equilibria in general.
    A concrete example of this was already given in Example \ref{ex:reflectedLexicographicallyOrderedGameWithoutEquilibria}: The payoff vectors all sum to $1$ (i.e. represent probability distributions), so the game is isomorphic to a distribution-valued game.
    As the existence of Nash equilibria already fails to hold in the case of such simple distributions, it also fails to hold in more general cases.
    For example, any game where the payoffs have finite common support can be converted to an isomorphic game with absolutely continuous payoffs by performing a convolution with an absolutely continuous distribution supported on $[-\epsilon, \epsilon]$ for some small enough $\epsilon$.
    This is illustrated in \autoref{fig:discrete-ac-convolution-example}.
    Therefore, games with absolutely continuous payoffs certainly also fail to have Nash equilibria in general.
    
    \begin{figure}
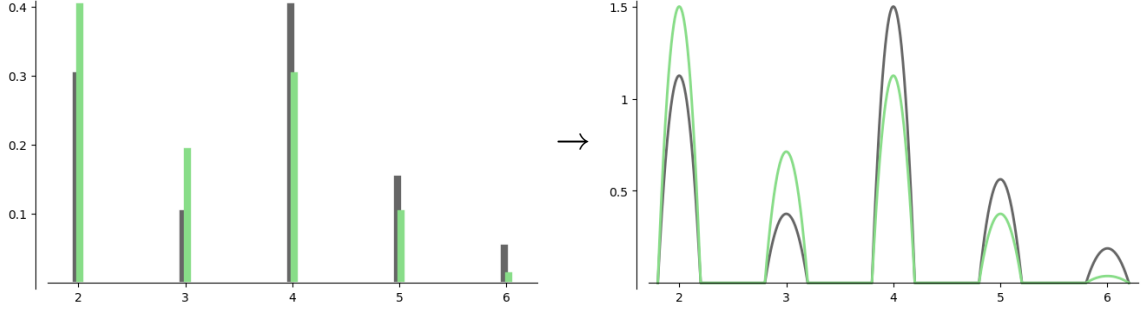

        \centering
        \includegraphics[width=0.47\textwidth]{Pictures/leqtail-rlex-masses-plot}
        \raisebox{60pt}{\LARGE$\shortrightarrow$}
        \includegraphics[width=0.47\textwidth]{Pictures/leqtail-rlex-densities-plot}
        \caption{Masses of discrete distributions $P_{1}, P_{2}$ (gray/green) on $\set{2, \dots, 6}$, and densities of corresponding absolutely continuous distributions $\tilde{P}_{1}, \tilde{P}_{2}$ obtained by convolution. Here $P_{1} \greatertail P_2$, and equivalently $\tilde{P}_1 \greatertail \tilde{P}_2$.}
        \label{fig:discrete-ac-convolution-example}
    \end{figure}
    
    The results of this section contradict an algorithm given in \cite[Section 3.1]{bib:rassGameRiskManagII} which supposedly calculates Nash equilibria of distribution-valued games
    \footnote{More precisely, \emph{multi-goal security strategies} (MGSS) for distribution-valued games. This is in a model where multiple objectives in form of multiple distributions are allowed, but the definition of a MGSS given in \cite[Definition 4.1]{bib:rassGameRiskManagI} coincides with a Nash equilibrium if the number of objectives is one.}.
    A similar algorithm is implemented in the \texttt{R} package \emph{HyRiM} (see \cite{bib:hyrimPackage}), a package that implements algorithms for distribution-valued games and which we will use in the next chapter.
    It is not completely clear what solutions the algorithm outputs for games without Nash equilibria, but it appears as if a Nash equilibrium of the highest-coordinate projection $G_m$ is calculated.

    \subsection{Probability that Lexicographically-Ordered Games have Nash Equilibria}
    
    In light of the strict requirements for ref-lex games to have non-pure Nash equilibria, one may wonder “how many” ref-lex games even have such equilibria. 
    It even seems plausible that “almost no” ref-lex game has a non-pure Nash equilibrium.
    This is indeed the case in a certain precise sense, and is formalized here in probabilistic way, similar to Theorem \ref{thm:probabilityOfPureNashEquilibria}:
    We show that the probability that a randomly chosen $G_{\leqRlex}$ has a Nash equilibrium is the same as the probability that $\Gproj{m}$ has a \emph{pure} Nash equilibrium, or in other words, that $G_{\leqRlex}$ has a non-pure Nash equilibrium with probability zero.
    
    \begin{thm}
        \label{thm:probabilityOfLexOrderedNashEquilibria}
        Let $m \geq 2$ be fixed and let $P \in \DP$ be an absolutely continuous probability distribution.
        Let $G$ be an $m$-dimensional vector-valued bimatrix game
        where all entries of all payoff vectors are picked iid from the distribution $P$.
        Then 
        \begin{gather}
            P(\set{\textit{\small$G_{\leqRlex}$ has a Nash equilibrium}}) = P(\set{\textit{\small$G_m$ has a pure Nash equilibrium}}), \label{eq:probabilityThatGRlexHasAnEquilibrium} \\
            P(\set{\textit{\small$G_{\leqRlex}$ has a non-pure Nash equilibrium}}) = 0. \label{eq:probabilityThatGRlexHasANonPureEquilibrium}
        \end{gather}
    \end{thm}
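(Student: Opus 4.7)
The plan is to combine the structural results of the previous subsection (Lemma \ref{lem:GmHasAllNashEquilibriaOfG}, Theorem \ref{thm:rlexGameProjectedSubgameCharacterization}, Corollary \ref{cor:degenerateGamesYieldPureEquilibriaGmGEquivalence}) with the standard observation that iid continuous payoffs avoid non-generic coincidences with probability one. I will establish \eqref{eq:probabilityThatGRlexHasANonPureEquilibrium} first, and then deduce \eqref{eq:probabilityThatGRlexHasAnEquilibrium} from it.

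The first step is to note that the random real-valued bimatrix game $\Gproj{m}$ is almost surely non-degenerate: the events that cause degeneracy (some mixed strategy of support size $k$ having more than $k$ pure best responses, cf.\ Definition \ref{def:degenerateRealValuedGame}) are described by finitely many non-trivial polynomial equalities on the $mn$ entries of the $m$-th coordinate payoff matrices, and each such equation has probability zero under the product of absolutely continuous distributions. On this probability-one event, Corollary \ref{cor:degenerateGamesYieldPureEquilibriaGmGEquivalence} gives that the pure Nash equilibria of $G_{\leqRlex}$ coincide with those of $\Gproj{m}$; in particular
\[ \Pr(\text{$G_{\leqRlex}$ has a pure NE}) = \Pr(\Gproj{m} \text{ has a pure NE}). \]

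For \eqref{eq:probabilityThatGRlexHasANonPureEquilibrium}, suppose $(s_1,s_2)$ is a non-pure Nash equilibrium of $G_{\leqRlex}$. By Lemma \ref{lem:GmHasAllNashEquilibriaOfG}, $(s_1,s_2)$ is also a Nash equilibrium of $\Gproj{m}$, and since at least one of $s_1,s_2$ is genuinely mixed, we have $\max(|T_1|,|T_2|)\geq 2$. By Theorem \ref{thm:rlexGameProjectedSubgameCharacterization}, the restriction $(t_1,t_2)$ must be a Nash equilibrium of the subgame $\Gprojsub{1}$. I will union-bound over all finitely many pairs $(T_1, T_2)\subseteq S_1\times S_2$ with $\max(|T_1|,|T_2|)\geq 2$ and show that, for each fixed pair, the probability that $\Gproj{m}$ admits a mixed Nash equilibrium with supports exactly $T_1, T_2$ whose restriction is also a Nash equilibrium of $\Gprojsub{1}$ equals zero. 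Conditioning on the $m$-th coordinate payoffs determines (on the a.s.\ non-degenerate event) the unique candidate strategies $(t_1,t_2)$ with supports $T_1, T_2$; by Corollary \ref{cor:equilibriumStrategiesSupportHaveEqualPayoffs}, $(t_1,t_2)$ being a Nash equilibrium of $\Gprojsub{1}$ forces the payoff of every pure strategy in $T_1$ against $t_2$ to be equal in the first coordinate (and analogously for player $2$). Since $\max(|T_1|,|T_2|)\geq 2$, this yields at least one non-trivial linear equation in the first-coordinate payoffs. These payoffs are independent of the $m$-th-coordinate payoffs used to define $(t_1,t_2)$ and are distributed according to an absolutely continuous law, so any such equation has conditional probability zero.

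Summing the zero conditional probabilities over the (finitely many) support pairs gives $\Pr(\text{$G_{\leqRlex}$ has a non-pure NE}) = 0$, which is \eqref{eq:probabilityThatGRlexHasANonPureEquilibrium}. Combined with the first step, this yields
\[ \Pr(\text{$G_{\leqRlex}$ has a NE}) = \Pr(\text{$G_{\leqRlex}$ has a pure NE}) = \Pr(\Gproj{m} \text{ has a pure NE}), \]
which is \eqref{eq:probabilityThatGRlexHasAnEquilibrium}. The main obstacle is the bookkeeping in the second step: we must argue carefully that the random supports $T_1, T_2$ (and the random mixture $(t_1,t_2)$) depend only on the $m$-th coordinate payoffs, so that the indifference equations forcing $(t_1,t_2)$ to be a Nash equilibrium of $\Gprojsub{1}$ are genuinely non-trivial linear constraints on an independent, absolutely continuous random vector, and hence fall into the null set on which an absolutely continuous variable solves a prescribed linear equation.
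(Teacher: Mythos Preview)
Your proposal is correct and follows essentially the same route as the paper: both arguments use almost-sure non-degeneracy of $\Gproj{m}$, the structural results (Lemma~\ref{lem:GmHasAllNashEquilibriaOfG}, Theorem~\ref{thm:rlexGameProjectedSubgameCharacterization}, Corollary~\ref{cor:degenerateGamesYieldPureEquilibriaGmGEquivalence}) to reduce the non-pure case to a finite union over support pairs, and then the indifference condition in a lower coordinate to obtain a non-trivial linear constraint on an independent absolutely continuous vector. The only cosmetic difference is that you use coordinate $1$ where the paper uses coordinate $m-1$, which is immaterial since $m\geq 2$.
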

    \begin{proof}[Proof]
        If $G_{\leqRlex}$ has a Nash equilibrium, we first distinguish whether or not it has a pure equilibrium.
        If it has a pure equilibrium, by Corollary \ref{cor:degenerateGamesYieldPureEquilibriaGmGEquivalence} this is equivalent to $\Gproj{m}$ having a pure equilibrium. Therefore we get:
        \begin{align*}
            &P(\set{\textit{$G_{\leqRlex}$ \small has a Nash equilibrium}})
            = P(\set{\textit{\small$\Gproj{m}$ has a pure Nash equilibrium}}) \\
            +~ &P(\set{\textit{\small$G_{\leqRlex}$ has a non-pure Nash equilibrium and no pure Nash equilibria}}).
        \end{align*}
        The second probability on the right can be bounded from above by leaving out one condition:
        \begin{multline*}
            P(\set{\textit{\small$G_{\leqRlex}$ has a non-pure Nash equilibrium and no pure Nash equilibria}}) \\
            \leq P(\set{\textit{\small$G_{\leqRlex}$ has a non-pure Nash equilibrium}})
        \end{multline*}
        To conclude the proof of both \eqref{eq:probabilityThatGRlexHasAnEquilibrium} and \eqref{eq:probabilityThatGRlexHasANonPureEquilibrium}, it remains to show that this probability vanishes.
        One can show that \(P(\set{\textit{\small$\Gproj{m}$ is degenerate}}) = 0\) (this is hinted at in \cite[p.54]{bib:nisanAlgorithmicGameTheoryCh3EquilibriumComputation}, which states that “almost all” games with real-valued payoffs are non-degenerate; we refrain from giving a proof here).
        Since $\Gproj{m}$ is non-degenerate almost surely, we can assume that $\Gproj{m}$ is non-degenerate in our calculations without changing the probabilities.

        Assume $G_{\leqRlex}$ has a non-pure Nash equilibrium.
        By Corollary \ref{cor:degenerateGamesYieldLeqRlexEquivalence}, since we assume $G_m$ is non-degenerate, this is equivalent to the statement that $\Gproj{m}$ has a non-pure Nash equilibrium $(s_1, s_2)$ such that for all $i \in [m-1]: \Gprojsub{i} \text{ has the Nash equilibrium } (t_1, t_2)$.
        Denote by $N_{\Gproj{m}}$ the set of Nash equilibria of $\Gproj{m}$. We can now further calculate the probabilities:
        \begin{flalign}
               &P(\set{\textit{\small $G$ has a non-pure Nash equilibrium}}) \nonumber \\
               =~&P(\set{\textit{\small$\exists s \in N_{\Gproj{m}}$ non-pure such that $\forall i \in [m-1]: \Gprojsub{i}$ has the Nash equilibrium $t$}}) \nonumber \\
               \leq~&P(\set{\textit{\small$\exists s \in N_{\Gproj{m}}$ non-pure such that $\Gprojsub{m-1}$ has the Nash equilibrium $t$}}). \label{eq:probabilityGmAndGm-1HaveSameNonPureEquilibrium}
        \end{flalign}
        Next we show that for any specific non-pure strategy profile $s = (s_1, s_2) \in N_{\Gproj{m}}$, there is zero probability that $\Gprojsub{m-1}$ has the Nash equilibrium $t = (t_1, t_2)$.
        Since $s$ is non-pure and $\Gproj{m}$ can be assumed to be non-degenerate, we have that $r \coloneqq \abs{\supp s_1} = \abs{\supp s_2} \geq 2$.
        We decompose the restricted strategy $t_2$ of player 2 into the weights it assigns to the pure strategies: $t_2 \eqqcolon (q_1, \dots, q_r)$.
        Assume that $\Gprojsub{m-1}$ has payoff matrices $(X_{ij})_{i,j \in [r]}$ for player 1 and $(Y_{ij})_{i, j \in [r]}$ for player 2, where the $X_{ij}$, $Y_{ij}$ are 
        the randomly-picked, iid absolutely-continuously-distributed payoff entries.
        By the equation \eqref{eq:linearSystemNecessaryForNashEquilibrium}, if $\Gprojsub{m-1}$ has the equilibrium $(t_1, t_2)$, it is necessary that $t_2$ makes player 1 indifferent between the first two rows:
        \begin{gather*}
            \sum_{j=1}^{r} X_{1j} q_j = \sum_{j=1}^{r} X_{2j} q_j.
        \end{gather*}
        Therefore,
        \begin{gather*}
            P(\set{\textit{\small$\Gprojsub{m-1}$ has the Nash equilibrium $t$}})
            \leq P\biggpars{\biggset{X_{1j} = \frac{1}{q_j} \biggpars{\sum_{j=1}^{r} X_{2j} q_j - \sum_{j=2}^{r} X_{1j} q_j}}}.
        \end{gather*}
        The last probability can further be represented as the probability that $\tilde{X} \coloneqq (X_{11}, \dots, X_{1r}, X_{21}, \dots, X_{2r})$ lies on a specific hyperplane in $\R^{2r}$. The Lebesgue measure of such a plane is zero. The random vector $\tilde{X}$ is absolutely continuous as a tuple of independent, absolutely continuous random variables. This implies that the probability that $\tilde{X}$ lies in a Lebesgue null set is zero.
        We can therefore conclude that $P(\set{\textit{\small$\Gprojsub{m-1}$ has the Nash equilibrium $t$}}) = 0$.
        
        We use this to show that the probability in \eqref{eq:probabilityGmAndGm-1HaveSameNonPureEquilibrium} is zero.
        Recall from the algorithm in Section \ref{subsec:exactComputationNashEquilibriaSupportEnumerationAlgorithm} that in a non-degenerate game, there can be at most one Nash equilibrium for each pair of index sets $(I, J)$ indexing the pure strategy sets $S_1, S_2$ with $\abs{I} = \abs{J}$.
        Write $\mathcal{I}$ for the set of all such $(I, J)$ with $\abs{I} = \abs{J} \geq 2$.
        For some $(I, J) \in \mathcal{I}$, write $s_{I,J}$ for the Nash equilibrium of $\Gproj{m}$ supported in $I, J$ if it exists, and $s_{I,J} \coloneqq \bot$ (“undefined”) otherwise. Write $t_{I,J}$ for the respective restricted strategy profile.
        With this notation, we can calculate:
        \begin{align*}
                 &P(\set{\textit{\small$\exists s \in N_{\Gproj{m}}$ non-pure such that $\Gprojsub{m-1}$ has the Nash equilibrium $t$}}) \\
                 =~ &P\pars{\bigcup_{(I, J) \in \mathcal{I}}\set{\textit{\small $\Gproj{m}$ non-degenerate, $s_{I, J} \neq \bot, \Gprojsub{m-1}$ has the Nash equilibrium $t_{I, J}$}}} \\
            \leq~ &\sum_{(I, J) \in \mathcal{I}} 
                P(\set{\textit{\small $\Gproj{m}$ non-degenerate, $s_{I, J} \neq \bot, \Gprojsub{m-1}$ has the Nash equilibrium $t_{I, J}$}}) \\
                =~ &\sum_{(I, J) \in \mathcal{I}} 0 = 0.
        \end{align*}
        These calculations show that $P(\set{\textit{\small $G$ has a non-pure Nash equilibrium}}) = 0$.
    \end{proof}

    \begin{rem}
        We can use Theorem \ref{thm:probabilityOfPureNashEquilibria} to calculate the probability that $G_m$ has a pure Nash equilibrium:
        If the players have $l$ and $n$ pure strategies, \eqref{eq:probabilityThatGRlexHasAnEquilibrium} gives
        \begin{gather*}
            P(\set{\textit{\small$G_{\leqRlex}$ has a Nash equilibrium}}) = 1 - \sum_{k=0}^{\min(l, n)} (-1)^k k! \binom{l}{k} \binom{n}{k} \pars{\frac{1}{nl}}^k 
            \xrightarrow{l, n \to \infty} 1 - \frac{1}{e}.
        \end{gather*}
        If we consider randomly-generated zero-sum games instead, the proof of Theorem 
        \ref{thm:probabilityOfLexOrderedNashEquilibria} works as well. In this case,
        \begin{gather*}            
            P(\set{\textit{\small$G_{\leqRlex}$ has a Nash equilibrium}}) = \frac{l!n!}{(l+n-1)!} \xrightarrow{l, n \to \infty} 0.
        \end{gather*}
    \end{rem}

    \subsection{Kakutani's Theorem and Lexicographically-Ordered Games}
    In \cite[p.29-30]{bib:rassGameRiskManagI} and \cite[Theorem 3]{bib:rassUncertaintyInGamesGameSec15}, it is argued that the existence of mixed-strategy Nash equilibria for distribution-valued games follows from \emph{Glicksberg's Theorem} (see \cite[Theorem 1.3]{bib:fudenbergGameTheory}). This theorem is a generalization of Nash's existence theorem, as it guarantees the existence of Nash equilibria for games with continuous payoff functions and strategy sets that are compact subsets of a metric space. However it assumes real-valued payoffs and can not simply be applied to distribution-valued payoffs.
    
    Glicksberg's theorem extends Kakutani's fixed point theorem that was used in the proof of Theorem \ref{thm:existenceOfMixedStrategyEquilibria} to show that all real-valued games have a mixed-strategy Nash equilibrium.
    The application of Kakutani's theorem, however, did not depend on the payoff space:
    The proof idea of Theorem \ref{thm:existenceOfMixedStrategyEquilibria} can in principle be applied to other, arbitrary payoff models. This is because Kakutani's fixed point theorem worked on the best-reply correspondence $r: \Delta \to \Pot(\Delta)$, which maps real-valued vectors to sets of real-valued vectors. Since these real-valued vectors do not represent payoffs, but mixed-strategy profiles, the function signature of $r$ does not depend on the payoffs being real numbers.
    
    Since the theorem could in principle be applied, but tail-ordered distribution-valued games (and more specifically, ref-lex-ordered vector-valued games) do not have mixed-strategy Nash equilibria in general, some condition of Kakutani's theorem must be violated. 
    It turns out that the property that fails to hold with a lexicographic ordering is that $r$ must have a \emph{closed graph}.
    The inherent reason for this is that $\leqRlex$ is not closed as a subset of $(\R^m)^2$:
    For example, $(0, 1/n) \geqRlex (1, 0) ~ \forall n \in \N$, but in the limit as $n \to \infty$, $(0, 0) \lessRlex (1, 0)$.
    (Recall that in the proof where Kakutani's theorem was used, we explicitly mentioned that $\leq$ as subset of $\R^2$ is closed -- often stated as the “sandwich theorem” -- but this does not hold for a lexicographic ordering).
    
    This missing property of the ordering in turn leads to $r$ not having a closed graph. For example, again consider the game from Example \ref{ex:reflectedLexicographicallyOrderedGameWithoutEquilibria}, with a Nash equilibrium $(s_1, s_2) = \pars{(\frac{2}{3}, \frac{1}{3}), (\frac{1}{3}, \frac{2}{3})}$.
    If we define a sequence of player-2-strategies converging to $s_2$ by $s_2^{(n)} \coloneqq \pars{\frac{1}{3} + \frac{1}{2n}, \frac{2}{3} - \frac{1}{2n}}, n \in \N$, then one can calculate that in $\Gproj{3}$, player 1's only best response to each $s_2^{(n)}$ is the strategy $(0, 1)$, since the second row gives slightly bigger payoff than the first row (yet, both converge to the same limit with increasing $n$, one from above and one from below). Therefore $(0, 1)$ is also the unique best response in $G$.
    However in the limit, the highest-coordinate payoffs of both rows become equal, and the second coordinate makes the first row preferable by $\leqRlex$ (as already calculated in Example \ref{ex:reflectedLexicographicallyOrderedGameWithoutEquilibria} before). Therefore $(1, 0)$ is the unique best response to $s_2$. This shows that $r$ does not have a closed graph.
    
    \chapter{Tweaking the Stochastic Order: Segmenting Loss Distributions}
    \label{chap:segmentingLossDistributions}
    
    \section{The Tweakable Stochastic Order}
    A modification of the ideas on distribution-valued games and the stochastic order was proposed by Ali Alshawish in \cite{bib:tweakableStochasticOrders}.
    The motivation for the proposed idea is the insight that the tail order can only capture a pessimistic viewpoint: As its name says, the decision is based entirely on the \emph{tail} of the distribution. Since it is applied to loss functions, an arbitrarily small probability of a high loss can overshadow everything further to the left. In other words, the ordering only takes the worst-case scenario into account.
        
    The \emph{tweakable stochastic order} defined in \cite{bib:tweakableStochasticOrders}, on the other hand,
    is designed such that it can be tweaked to a decision maker's risk attitude, represented by a utility function.
    For finitely-supported distributions, it can represent as special cases the maximally risk-averse $\leqtail$, and the expected-value-ordering $\leqE$ (see Example \ref{ex:stochasticOrdersLeqELeqst}) which is considered risk-neutral.
    
    We define the tweakable stochastic order $\leqtw$, slightly adjusted to match the framework presented so far.
    Let $[a, b]$ be an interval where $a < b$. Let $\mathcal{I}$ be an partition of that interval, i.e. a finite subset of $[a, b]$ with elements $a = x_1 < x_2 < \dots < x_m < x_{m+1} = b$.
    For $P \in \DP$ and some Borel set $A \in \B$, let $E_A(P) \coloneqq \lintegral{A}{x}{P(x)}$ be its expected value if restricted to $A$.
    \begin{defn}
        \label{def:tweakableStochasticOrder}
        Let $P_1, P_2 \in \DP_{[a, b]}$ be probability measures.
        Then the \emph{tweakable stochastic order} $\leqtw$ given the partition $\mathcal{I}$ is defined by
        \begin{gather*}
            P_1 \leqtw P_2 \colonlra \bigpars{E_{[x_1, b]}(P_{1}), \dots, E_{[x_m, b]}(P_1)} \leqRlex \bigpars{E_{[x_1, b]}(P_2), \dots, E_{[x_m, b]}(P_2)}.
        \end{gather*}
        Distributions not in $\DP_{[a, b]}$ are incomparable by $\leqtw$.
    \end{defn}

    The definition uses overlapping intervals $[x_1, b], [x_2, b], \dots, [x_m, b]$. In each of these intervals, it takes the expected value; the resulting vectors are compared by the reflected lexicographic order. Alternatively, we can partition $[a, b]$ into intervals $[x_1, x_2), [x_2, x_3), \dots, [x_m, b]$, which is equivalent since a lexicographic comparison is used:

    \begin{lemma}
        Let $P_1, P_2 \in \DP_{[a, b]}$.
        Then
        \begin{alignat*}{3}
                                     &\bigpars{E_{[x_1, x_2)}(P_{1}), \dots, E_{[x_m, b]}(P_1)}\; &\leqRlex \bigpars{E_{[x_1, x_2)}(P_2), \dots, E_{[x_m, b]}(P_2)} \\
            \Longleftrightarrow\quad &\bigpars{E_{[x_1, b]}(P_{1}), \dots, E_{[x_m, b]}(P_1)} &\leqRlex \bigpars{E_{[x_1, x_2)}(P_2), \dots, E_{[x_m, b]}(P_2)}.
        \end{alignat*}
    \end{lemma}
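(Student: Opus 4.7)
The plan is to reduce both inequalities to a common form by expressing each coordinate of one vector as a linear combination of coordinates of the other, and then to show that the resulting linear map preserves the reflected lexicographic order. I am assuming the statement has the typo corrected, so that the second display also uses $(E_{[x_1, b]}(P_2), \dots, E_{[x_m, b]}(P_2))$ on the right-hand side.

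First I would set up the notation: for $k \in [m]$, let $v_k^P \coloneqq E_{[x_k, x_{k+1})}(P)$ (with the convention that the last interval is $[x_m, b]$, i.e.\ $x_{m+1} = b$ and we treat the rightmost subinterval as closed at $b$), and let $w_k^P \coloneqq E_{[x_k, b]}(P)$. Since the intervals $[x_k, x_{k+1})$ for $k = 1, \dots, m-1$ together with $[x_m, b]$ form a disjoint partition of $[a, b]$, linearity of the Lebesgue integral gives the key relation
\begin{gather*}
    w_k^P = \sum_{j = k}^{m} v_j^P, \qquad v_k^P = w_k^P - w_{k+1}^P,
\end{gather*}
where we set $w_{m+1}^P \coloneqq 0$. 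In particular $v_k^P$ depends only on $w_j^P$ for $j \geq k$, and vice versa.

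Next I would handle the two cases in the definition of $\leqRlex$. The equality case is immediate: $v^{P_1} = v^{P_2}$ if and only if $w^{P_1} = w^{P_2}$, because the two transformations above are mutual inverses. For the strict case, suppose $v^{P_1} \lessRlex v^{P_2}$, so that there exists $k \in [m]$ with $v_k^{P_1} < v_k^{P_2}$ and $v_i^{P_1} = v_i^{P_2}$ for all $i > k$. Using $w_i^P = \sum_{j=i}^{m} v_j^P$, the agreement of all $v_i$ for $i > k$ yields $w_i^{P_1} = w_i^{P_2}$ for $i > k$, and then $w_k^{P_1} = v_k^{P_1} + w_{k+1}^{P_1} < v_k^{P_2} + w_{k+1}^{P_2} = w_k^{P_2}$, so $w^{P_1} \lessRlex w^{P_2}$. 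The converse direction is symmetric using $v_k^P = w_k^P - w_{k+1}^P$.

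Combining both cases yields the full equivalence. There is no real obstacle here; the only point requiring care is the bookkeeping at the last coordinate (the choice of $x_{m+1} = b$ and the convention $w_{m+1}^P = 0$), ensuring that the partitioned intervals cover $[a, b]$ exactly once and that the telescoping identity for $v_k^P$ in terms of $w_k^P$ holds at $k = m$ as well.
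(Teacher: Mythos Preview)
Your proof is correct and follows essentially the same approach as the paper: both establish the telescoping relation $E_{[x_k,b]}(P) = \sum_{j \geq k} E_{[x_j,x_{j+1})}(P)$ and then argue that, since the reflected lexicographic order only inspects coordinate $k$ when all higher coordinates agree, the comparison at position $k$ is unchanged by adding or subtracting those (equal) higher-coordinate sums. Your version is somewhat more explicit in its case analysis and notation, and you are right that the statement as printed has a typo on the right-hand side of the second display.
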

    \begin{proof}
        Observe that for any $P_l$ ($l = 1, 2$), $E_{[x_k, b]}(P_l) = \bigpars{\sum_{i=k}^{m-1} E_{[x_i, b)}(P_l)} + E_{[x_m, b]}(P_l)$ for all $k \in [m]$.
        The reflected lexicographic ordering considers the $k$-th coordinate if and only if all higher coordinates are equal in both vectors:
        In this case, above sum representation implies that $E_{[x_k, b]}(P_1) < E_{[x_k, b]}(P_2) \lra E_{[x_k, x_{k+1})}(P_1) < E_{[x_k, x_{k+1})}(P_2)$, and analogously $E_{[x_k, b]}(P_1) = E_{[x_k, b]}(P_2) \lra E_{[x_k, x_{k+1})}(P_1) = E_{[x_k, x_{k+1})}(P_2)$.
        In other words, it does not matter for the ordering if values of higher coordinates are added/subtracted to lower coordinates consistently on both sides.
    \end{proof}

    Considering that $\leqtail$ is “essentially lexicographic” in the sense of Theorems \ref{thm:acTailOrderSuffConditions}, \ref{thm:discreteTailOrderSuffConditions}, and Corollary \ref{cor:discreteFiniteTailOrder-RLexEquivalence},
    $\leqtw$ can be understood as a coarser version of the tail order, in which the ordering decision is made considering probabilities in larger regions lexicographically instead of only considering single points.
    Since $\leqRlex$ is total, $\leqtw$ is a total order on $\DP_{[a, b]}$ (yet obviously not on $\DP$). It is not antisymmetric, since it is indifferent between any two distributions with equal expected values in each of the partition intervals.
    
    The order can be “tweaked” to a decision maker's risk attitude by choosing suitable partitioning points $\mathcal{I}$. \cite{bib:tweakableStochasticOrders} presents a method based on the decision-theoretic tool of utility functions that model risk-averse or risk-seeking behavior.    
    The utility function $u: [a, b] \to \R$ is assumed to be continuous, monotonically increasing, and without loss of generality $u([a, b]) = [0, 1]$.
    The proposed method partitions the range $[0, 1]$ of $u$ into $m$ equally-sized intervals, with partition points $\set{\frac{1}{m}, \dots, \frac{m-1}{m}}$, and constructs $\mathcal{I} = \set{a, x_2, x_3, \dots, x_m, b}$ such that $u(x_i) = \frac{i-1}{m}$ for $i = 2, \dots, m$.
    In other words, the points $x_i$ in $\mathcal{I}$ are defined as the $\frac{i-1}{m}$-quantiles of $u$, analogously to quantiles of probability distribution functions. Then $\leqtw$ is the tweakable stochastic order tweaked to the utility function $u$.
    
    \section{Games with Multiple Distribution Segments as Objectives}
    The stochastic order $\leqtw$ can of course be used to order distribution-valued games in the sense of Definition \ref{def:distributionValuedGame}, which was the approach chosen in \cite{bib:tweakableStochasticOrders}.
    In a new approach, we will use the segmentation idea in a different way and define multi-objective games: Each of the segments' expected values will be viewed as a distinct objective to be minimized.
    
    \subsection{Multi-Objective Games and Pareto-Nash Equilibria}
    Of course in such a multi-objective setting, one cannot hope to be able to minimize all objectives at once. Quite possibly, one objective can only take on its minimal value if another objective is not minimal. A common way to deal with this is to consider \emph{Pareto-optimal} solutions:
    A vector is Pareto-optimal if no improvement in any coordinate is possible without making another coordinate worse. Phrased in a different way, a vector is Pareto-optimal if no other of the vectors in consideration \emph{dominates} it in all coordinates.
    For example, if one considers the set $\set{v_1 = (4, 2, 4), v_2 = (2, 2, 2), v_3 = (0, 0, 5)}$, the Pareto-minimal vectors are $v_2$ and $v_3$, while $v_1$ is dominated by $v_2$ which has smaller or equal values in all coordinates.
    
    \begin{defn}
        Define the preorder
        \begin{align*}
        	x \leqpareto y ~ & \colonlra~ \exists i: x_i < y_i \vee x = y.
        \end{align*}
    \end{defn}
    We have $x \leqpareto y$ if $x$ is dominated by $y$ in at least one coordinate.
    Note that $\leqpareto$ is not antisymmetric: E.g. if $x_1 < y_1$ and $y_2 < x_2$, then $x \leqpareto y$ and $y \leqpareto x$.
    We have $x \lesspareto y$ iff $x \leqpareto y$ and not $y \geqpareto x$, i.e. if $x$ is dominated by $y$ in all coordinates, and strictly dominated in at least one.
    The set of Pareto-minimal elements in a set $S \subseteq \R^m$ is given by:
    \begin{gather*}
        \set{x \in S \mid \forall y \in S: x \leqpareto y}.
    \end{gather*}    
    
    The notion of multi-objective games was pioneered by David Blackwell in \cite{bib:blackwellVectorPayoffs}, and Lloyd Shapley defined the concept of multi-objective equilibria in the Pareto-optimal sense in \cite{bib:shapleyMultiobjectiveEquilibriumPoints} (yet the terminology of Pareto optimality was only later associated with it).
    We can use our framework of games with generalized payoffs together with the order $\leqpareto$ to formalize this equilibrium concept.
    
    \begin{defn}
        \label{defn:multiObjectiveGame}
        Let $G$ be a vector-valued game as in Definition \ref{def:vectorValuedGame}, i.e. a game with generalized payoffs in $\R^m$.
        We call the Nash equilibria of $G$ with respect to $\leqpareto$ \emph{Pareto-Nash equilibria} (cf. \cite{bib:paretoNashEquilibria}).
        In this context, we also call $G$ a \emph{multi-objective game}.
    \end{defn}

    \begin{rem}
        A Pareto-Nash equilibrium can be interpreted as a strategy profile where no player can deviate to a different strategy and get a strictly better payoff in one coordinate without getting a strictly poorer payoff in another. 
        Definition \ref{defn:multiObjectiveGame} does not allow different players to have a different number of objectives $m_1, \dots, m_n$: We do not explicitly model this case to avoid cumbersome additional notation. However such payoffs can be represented by letting $m \coloneqq \max_{i \in [n]} m_i$, and padding all lower-dimensional payoff vectors with zeroes so they lie in $\R^m$.
    \end{rem}

    An important result from Shapley's paper \cite{bib:shapleyMultiobjectiveEquilibriumPoints} is that Pareto-Nash equilibria can be found by transforming the multi-objective game into a real-valued game via a weighted sum of the different objectives. In particular, the Pareto-Nash equilibria are exactly the equilibria of such weighted games for different weight vectors.
    
    \begin{thm}[Characterization of Pareto-Nash equilibria, cf. \cite{bib:shapleyMultiobjectiveEquilibriumPoints,bib:paretoNashEquilibria}]~\\
        Let $G$ be a mixed-extension multi-objective game with $n$ players and payoffs in $\R^m$. 
        For some $0 \neq w_1, \dots, w_n \in \Rp^m$, denote by $\tilde{G}^{w_1, \dots, w_n}$ the real-valued game with payoff functions
        $\tilde{u}_k: s \mapsto \innerprod{u_k(s)}{w_k}$ (taking the weighted sum of objectives in $u_k(s)$ by weights in $w_k$; $\innerprod{\dummydot}{\dummydot}$ denotes the dot product on $\R^m$).
        Then some strategy profile $s \in S$ is a Pareto-Nash equilibrium of $G$ if and only if it is a Nash equilibrium of $\tilde{G}^{w_1, \dots, w_n}$ for some weights $w_1, \dots, w_n$.
        \footnote{The theorem can be generalized further, see \cite{bib:shapleyMultiobjectiveEquilibriumPoints,bib:paretoNashEquilibria}: Instead of mixed-extension games, we could allow arbitrary games with continuous payoff functions defined on some convex strategy set.
        Also we can restrict ourselves to weight vectors whose entries sum to 1, as scaling the payoffs of a real-valued game by a positive scalar preserves equilibria.
        }
        \label{thm:paretoNashEquilibriaWeightingCharacterization}
    \end{thm}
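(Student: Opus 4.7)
The plan is to handle the two directions separately. The easy direction, ``weighted Nash $\Rightarrow$ Pareto-Nash,'' is a contrapositive: if $s$ is a Nash equilibrium of $\tilde{G}^{w_1,\ldots,w_n}$ but fails Pareto-Nash, then some player $k$ has a deviation $\tilde{s}_k \in \Delta_k$ with $u_k(\tilde{s}_k,s_{-k}) \greaterpareto u_k(s)$, i.e.\ coordinate-wise $\geq$ with strict inequality in at least one entry. Combined with $w_k \in \Rp^m$ strictly positive, this yields $\tilde{u}_k(\tilde{s}_k,s_{-k}) > \tilde{u}_k(s_k,s_{-k})$, contradicting the Nash property of the weighted game. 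I would flag here that if the $w_k$ are only required non-negative (possibly with some zero components), this implication degrades to a \emph{weak} Pareto-Nash statement (no deviation strictly dominates in \emph{every} coordinate); strict positivity of the weights is what makes this direction work cleanly against the strict Pareto order.

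For the harder direction, ``Pareto-Nash $\Rightarrow$ weighted Nash for some weights,'' I would argue player-by-player using a separating-hyperplane argument. Fix a player $k$ and opponents' strategies $s_{-k}$, and consider the image set $A_k \coloneqq \set{u_k(\tilde{s}_k,s_{-k}) \mid \tilde{s}_k \in \Delta_k} \subseteq \R^m$. By the linearity of $u_k(\dummydot,s_{-k})$ on the simplex $\Delta_k$ established in Remark \ref{rem:mixedExtensionsRemark}, $A_k$ is a convex subset of $\R^m$ (as the image of a convex set under a linear map). The Pareto-Nash hypothesis means that no $y \in A_k$ satisfies $y \greaterpareto u_k(s)$; equivalently, the open convex cone $C_k \coloneqq u_k(s) + \R_{>0}^m$ of strictly dominating vectors is disjoint from $A_k$.

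The heart of the argument is one application of the separating hyperplane theorem to the disjoint convex sets $A_k$ and $C_k$: it produces a nonzero $w_k \in \R^m$ with $\innerprod{w_k}{y} \leq \innerprod{w_k}{u_k(s)} \leq \innerprod{w_k}{z}$ for all $y \in A_k$ and $z \in C_k$. Plugging $z = u_k(s) + t\,e_i$ for $t \to 0^+$ into the right-hand inequality forces each entry $w_{k,i} \geq 0$, so $w_k \in \Rp^m \setminus \set{0}$; the left-hand inequality is precisely the statement that $s_k$ maximizes $\tilde{u}_k(\dummydot,s_{-k}) = \innerprod{w_k}{u_k(\dummydot,s_{-k})}$ over $\Delta_k$. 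Carrying out this construction separately for each player $k \in [n]$ and collecting the resulting $w_k$ yields weight vectors under which $s$ is a Nash equilibrium of $\tilde{G}^{w_1,\ldots,w_n}$.

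The main obstacle is really just the strict-versus-weak Pareto asymmetry noted above: the separation argument only produces non-negative weights that may have some zero components, but the ``$\Leftarrow$'' direction as sketched needs strictly positive weights to cleanly return the strict Pareto-Nash property. Honestly reconciling the two directions likely requires either perturbing the separating hyperplanes and taking a convex combination of weight vectors corresponding to different Pareto-improvement directions, or stating the theorem in terms of weak Pareto-Nash equilibria. Apart from this subtlety, the proof is routine convex analysis, resting only on the mixed-extension linearity from Remark \ref{rem:mixedExtensionsRemark} and one hyperplane separation per player.
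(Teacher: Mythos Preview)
The paper does not prove this theorem; it merely cites it from Shapley and from \cite{bib:paretoNashEquilibria} and moves on to use it algorithmically. So there is no ``paper's own proof'' to compare against. Your separating-hyperplane approach, carried out player by player on the convex image set $A_k = \{u_k(\tilde{s}_k,s_{-k}) : \tilde{s}_k \in \Delta_k\}$, is exactly the standard argument used in the cited sources, and the easy direction via contraposition is also standard.

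Two remarks on accuracy. First, your sentence ``equivalently, the open convex cone $C_k = u_k(s) + \R_{>0}^m$ is disjoint from $A_k$'' is not an equivalence: under the paper's definition of $\leqpareto$, the Pareto-Nash hypothesis actually excludes the larger set $u_k(s) + (\Rp^m \setminus \{0\})$ from $A_k$, not just the open orthant. This does not damage your argument, since you only use the weaker disjointness to invoke separation, and the stronger hypothesis certainly implies it.

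Second, the asymmetry you flag is genuine and is a real issue with the theorem \emph{as stated in the paper}. With the paper's $\leqpareto$ (which corresponds to \emph{strong} Pareto optimality) and merely non-negative nonzero weights, the ``$\Leftarrow$'' direction fails: a one-player example with payoffs $(1,0)$ and $(1,1)$ and weight $w=(1,0)$ makes the first strategy a Nash equilibrium of the weighted game but not a Pareto-Nash equilibrium. The cited literature resolves this either by working with \emph{weak} Pareto-Nash equilibria (no deviation strictly better in every coordinate) together with non-negative weights, or by requiring strictly positive weights for the strong version. Your diagnosis of the obstacle is correct; the fix is not a perturbation trick but simply aligning the statement with one of these two consistent pairings.
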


    The characterization gives rise to a simple algorithm which finds one Pareto-Nash equilibrium, as we can simply pick arbitrary weights $w_1, \dots, w_m$ and apply one of the usual algorithms to find Nash equilibria in real-valued games.
    However, it also shows that the equilibrium heavily depends on how the players weigh their objectives. Since there are infinitely many weight vectors, we cannot rule out that there can be infinitely many Pareto-Nash equilibria. 
    
    It is not in the scope of this work to examine the set of Pareto-Nash equilibria in detail, but it is interesting to at least have some idea of its possible structure, 
    the number of different equilibria and their relationships.
    To get some intuition of which Pareto-Nash equilibria a game can have, we experiment with different weights and visualize the resulting equilibria profiles in a plot where the strategies in the profile are represented as dots.
    \autoref{fig:paretoNashEquilibriaShowcase}
    shows examples of such plots for randomly generated 2-player games with 3 pure strategies for each player, and a varying number $m$ of objectives: For each equilibrium profile, the strategies of player 1/2 are represented by a red/green dot, respectively, projected from the two-dimensional mixed-strategy-simplex in three-dimensional space to the plane 
    (the corners represent pure strategies).
    The examples showcase the possible complexity of the set of Pareto-Nash equilibria:
    In one of the games, all equilibria mix between at most two of the three strategies; in others, equilibria seem to follow certain patterns which can be recognized in the visual representation.
    One could hope that while Pareto-Nash equilibria are not unique, they at least concentrate on a small number of points -- however the examples show that this is not the case in general, as there are many different equilibrium points in all examples.
    Another observation is that in all cases, there are equilibrium profiles far apart from another, so Pareto-Nash equilibria of the same game obtained by different weightings need not be “close”, but can be completely different.
    In particular, this motivates that for an algorithm which calculates a specific Pareto-Nash equilibrium, it is reasonable to take a weighting vector as input instead of choosing one on its own.
    \begin{figure}[h]
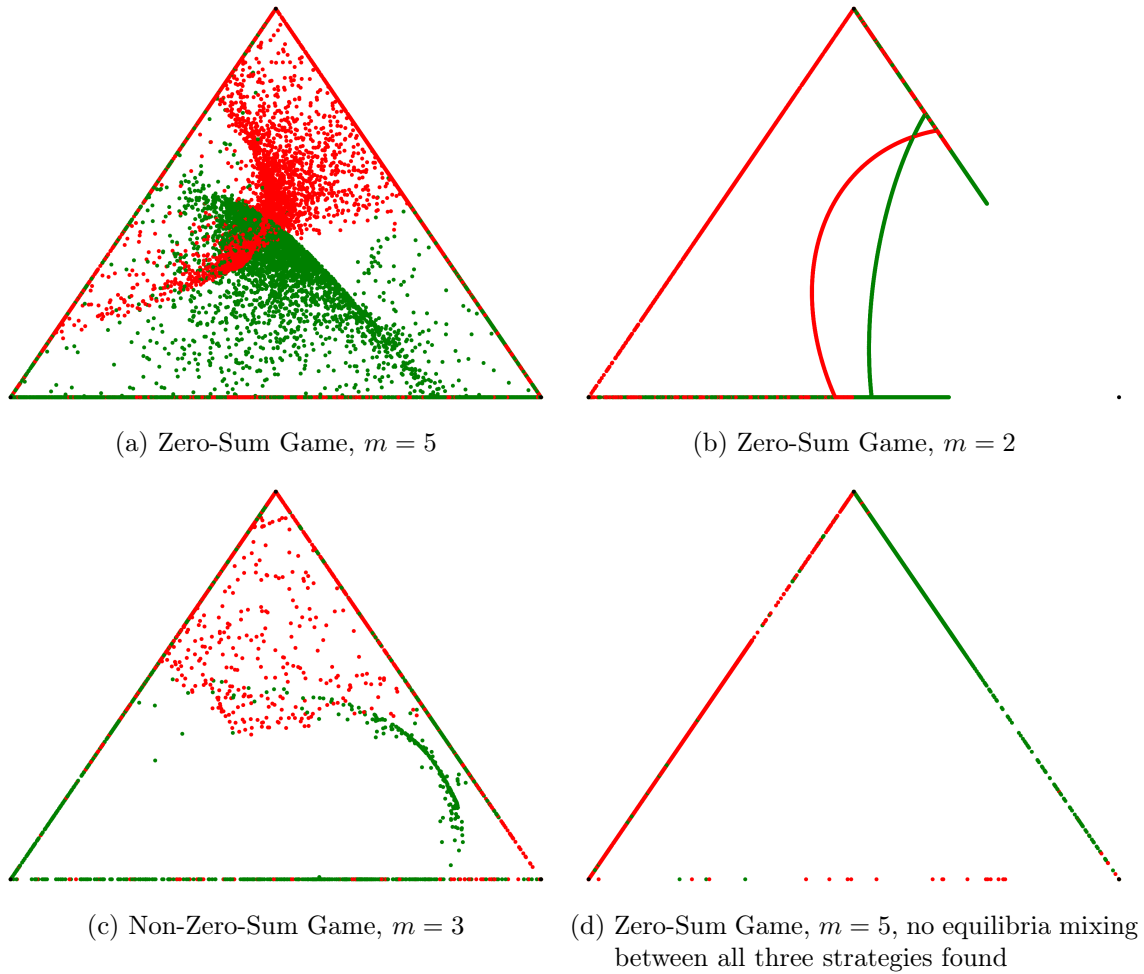

        \centering
        \begin{subfigure}[t]{0.49\textwidth}
            \includegraphics[width=\textwidth]{Pictures/paretoNash-curvyRegionsManyOutliers-k5}
            \caption{Zero-Sum Game, $m=5$}
        \end{subfigure}
        \begin{subfigure}[t]{0.49\textwidth}
            \includegraphics[width=\textwidth]{Pictures/paretoNash-curves1-k2}
            \caption{Zero-Sum Game, $m=2$}
        \end{subfigure}
        
        \vspace*{0.01\textwidth}
        
        \begin{subfigure}[t]{0.49\textwidth}
            \includegraphics[width=\textwidth]{Pictures/paretoNash-bimatrix-k3}
            \caption{Non-Zero-Sum Game, $m=3$}
        \end{subfigure}
        \begin{subfigure}[t]{0.49\textwidth}
            \includegraphics[width=\textwidth]{Pictures/paretoNash-emptyCenter-k5}
            \caption{Zero-Sum Game, $m=5$, no equilibria mixing between all three strategies found}
        \end{subfigure}
%
%
%
        \caption{Different sets of Pareto-Nash equilibria visualized (10\,000 points each)}
        \label{fig:paretoNashEquilibriaShowcase}
    \end{figure}

    \subsection{Multi-Objective Segmented-Distribution Games}
    Putting the pieces together, we construct the multi-objective game based on loss distribution segments as follows:
    Let $G$ be a distribution-valued bimatrix game with payoffs from $\DP_{[a, b]}$, $a < b$, and let $\mathcal{I} = \set{a, x_2, x_3, \dots, x_m, b}$ be a partition of the interval $[a, b]$.
    
    \newcommand{\Gseg}[1][]{G_{\text{seg,$\ifstrempty{#1}{\mathcal{I}}{#1}$}}}
    Define the \emph{segment game} $\Gseg$ as a multi-objective game with the same strategies as in $G$, where each player $k$ has the utility function:
    \begin{gather}
        u_{\text{seg}, k}: S \to \R^m, s \mapsto -\bigpars{E_{[a, x_2)}(u_k(s)), E_{[x_2, x_3)}(u_k(s)), \dots, E_{[x_m, b]}(u_k(s))}.
        \label{eq:segmentsExpectationVector}
    \end{gather}
    We negate the vector as we are in the context of loss distributions, but want to stick to the convention that utilities should be maximized.
    As outlined in the previous section, we can find Pareto-Nash equilibria  of $\Gseg$ by weighing the different objectives and then solving the resulting real-valued game.
    This approach is implemented for the thesis in the language \texttt{R}, and the following describes the details of the implementation.
    
    The \texttt{R} package \emph{HyRiM} \cite{bib:hyrimPackage} by Stefan Rass, Sandra König and Ali Alshawish was developed along with the papers \cite{bib:rassGameRiskManagI,bib:rassGameRiskManagII,bib:rassGameRiskManagIII} and implements data structures and algorithms for distribution-valued games.
    In particular, it provides the class \texttt{lossDistribution} that represents finitely-supported discrete and absolutely continuous loss distributions. Absolutely continuous distributions are approximated by a kernel density estimation method: Given a finite number of samples, their distribution's density function is approximated as a convex combination of Gaussian densities. The package also provides the class \texttt{mosg} that represents distribution-valued games, and implements the computation of Nash equilibria for real-valued games. 
    The abbreviation stands for \emph{multi-objective security game}, as the package (unlike the presentation in this thesis) allows to define games with multiple distribution-valued objectives.
    
    We implement the multi-objective segmentation-based games in the context of the HyRiM package, and as a possible extension to it. As the package focuses on zero-sum games, we also restrict ourselves to zero-sum segmented games.
    Multi-objective segmented games are represented by the \texttt{moseg} class. Such a game can be created from a single-objective distribution-valued game of the built-in \texttt{mosg} class, and a vector of partition points.
    Loss distributions are turned into real-valued expectation vectors by \eqref{eq:segmentsExpectationVector}: This is implemented in the function \texttt{segmentedLossDistribution}, which takes in a \texttt{lossDistribution} object and the partition points and returns the expectation vector.
    Computing the expected value is straightforward in the case of finitely-supported discrete distributions as a sum. For absolutely continuous distributions, the numerical integration function \texttt{integrate} provided by \texttt{R} is used.
    Finally, the method \texttt{moseg.paretoNashEquilibrium} computes a Pareto-Nash equilibrium of a \texttt{moseg} game, given a vector of weights as inputs.
    It first scalarizes the game based on the weights.
    Then for the actual equilibrium computation, it utilizes the HyRiM built-in method \texttt{mgss} which implements equilibrium computation for real-valued games.
    
    The three methods are implemented in the file \texttt{multiobjectiveSegmentGame.R}. The source code is shown on the following pages.

    \paragraph{Creation of Segmented Game}
    The code for creating a \texttt{moseg} game:
    
    \lstset{showspaces=false,
        keywordstyle=\ttfamily\bfseries\color{purple},
        basicstyle=\footnotesize\ttfamily,
        numbers=left,
        numberstyle=\tiny,
        commentstyle=\color{gray},
        breaklines=true,
        postbreak=\mbox{\textcolor{lightgray}{$\hookrightarrow$}\space},
        showstringspaces=false,
        stringstyle=\color{brickred},
    }
    \lstinputlisting[language=R, firstline=13, lastline=31]{Code/leqtw-multiobjective/multiobjectiveSegmentGame.R}
    
    \paragraph{Computing Expectation Vectors}
    The code for converting a \texttt{lossDistribution} to a segment expectation vector is given in the next listing. The implementation relaxes the requirement made in the definition of $\Gseg$ that the interval $[a, b]$ must cover the whole support of all distributions involved: Since absolutely continuous distributions are estimated as combination of Gaussian kernels, their support will always be the whole real line. Instead of placing an arbitrary restriction on the partition (e.g., 99\% of the probability mass must lie in $[a, b]$), we prefer to give the user the flexibility to choose the partition freely. To make this behavior consistent, the same is allowed for discrete distributions. Unlike in \eqref{eq:segmentsExpectationVector}, the expectation vectors need not be negated in the implementation, because the HyRiM packages already interprets payoffs as losses.
    \lstinputlisting[language=R, firstline=67, deletekeywords={density}]{Code/leqtw-multiobjective/multiobjectiveSegmentGame.R}
    
    \paragraph{Computing Pareto-Nash Equilibria}
    The code for computing Pareto-Nash equilibrium of a \texttt{moseg} based on weights is shown in the next listing.
    \lstinputlisting[language=R, firstline=42, lastline=55, deletekeywords={weights}]{Code/leqtw-multiobjective/multiobjectiveSegmentGame.R}

    \paragraph{Examples}
    Examples of using the code are supplied in the file \emph{mosegExamples.R}.
    There are two examples: The first example constructs a 2x2 two-player zero-sum game with discrete distributions supported on $\set{1, \dots, 10}$ as payoffs.
    The second example constructs a 2x2 two-player zero-sum game with absolutely continuous distributions as payoffs.
    In both cases, a segmented game is created and the Pareto-Nash equilibrium given a fixed weights vector is computed.
    For comparison, the “MGSS” (\emph{Multi-Goal Security Strategy}) solution the HyRiM-built-in method \texttt{mgss} computes is output as well.

    \chapter{Conclusion}
    The model of distribution-valued games provides a valuable tool to model games in uncertain situations where the exact outcomes cannot be known beforehand, but can only be modeled on a stochastic basis. Of course, the usefulness of this model depends heavily on the ability to specify suitable preferences in the form of stochastic orders, and the thesis shows that this is a critical point and that the currently considered orderings have some shortcomings. In particular, two problems with the tail order were identified: The first issue is that the order is not total unless one restricts the order space, and in particular for any non-degenerate interval $[a, b] \subseteq [0, \infty)$, there are incomparable distributions supported on $[a, b]$. The second issue is that tail-ordered games can fail to have Nash equilibria, and that mixed-strategy Nash equilibria only exist for games with a specific structure. 
    
    Neither of the two issues is grave enough to stop the tail order from being useful: The first problem is rather of mathematical than of practical importance, and it seems plausible that the cases where the order exhibits incomparability will rarely, if ever, occur in practice. The problem can even be circumvented altogether if one identifies a smaller class of admissible distributions, and shows that the ordering is total within that class (of course, some work is required if one wants to show the totality on such a smaller class -- further work could try to identify easy-to-check sufficient conditions for the order to be total on such a class). The second problem is more severe: The famous theorem by John Nash that all real-valued games have at least one Nash equilibrium is a cornerstone of the classical theory, and it certainly has practical implications that there is no equivalent for tail-ordered distribution-valued games. Yet  the fact that the strict conditions only apply to mixed-strategy Nash equilibria somewhat mitigates the issue, since many distribution-valued games can still have pure Nash equilibria.
    So both problems can be circumvented to a certain extent -- anyway, their existence shows that some care has to be taken when using tail-ordered games, and that not all guarantees that make life easy in the real-valued theory continue to hold in the distribution-valued case.
    
    It must also be kept in mind that the tail order is only one approach to expressing preferences in distribution-valued games, and a route for further work on the topic could be to analyze the behavior with respect to different orderings. An example of this is the tweakable stochastic order $\leqtw$ we showed in Chapter 5. 
    We subsequently introduced a method to turn a distribution-valued game into a multi-objective game and used Pareto-Nash equilibria to solve it: This is an example of how a different solution concept can be used to guarantee the existence of solutions.
    However, this solution concept does not use a lexicographic comparison anymore.
    Further work could try to find another solution concept more fitting than the Nash equilibrium for distribution-valued games, that still uses lexicographic comparisons in the spirit of tail-order Nash equilibria, yet is guaranteed to always exist.
    
    In summary, this thesis analyzed the model of distribution-valued games and the tail order from a mathematical point of view.
    Some questions about distribution-valued games could be cleared up, yet there is a lot of potential for future work on the subject, especially with regard to proving totality of the tail order on smaller classes of distributions, examining different preference orderings for distribution-valued games, and possibly formulating different solution concepts.
    
    \setstretch{1.08}
    \printbibliography

    \checkoddpage
    \ifthenelse{\boolean{oddpage}}{\newpage\thispagestyle{empty}~}{}
    \let\fakeincludegraphics\includegraphics
    \let\includegraphics\realincludegraphics
    \let\includegraphics\fakeincludegraphics
\end{document}